\algrenewcommand\algorithmicrequire{\textbf{Input:}}
\algrenewcommand\algorithmicensure{\textbf{Output:}}
\newcolumntype{C}[1]{>{\centering\arraybackslash}p{#1}}
\definecolor{navy}{HTML}{2F729C} 
\newcommand{\Q}{{\mathbb Q}}
\newcommand{\GL}{{\rm GL}}
\newcommand{\Gal}{{\rm Gal}}
\newtheorem{theorem}{Theorem}[section]
\newtheorem{lemma}[theorem]{Lemma}
\newtheorem{corollary}[theorem]{Corollary}
  \DeclareFontFamily{U}{wncy}{}
    \DeclareFontShape{U}{wncy}{m}{n}{<->wncyr10}{}
    \DeclareSymbolFont{mcy}{U}{wncy}{m}{n}
    \DeclareMathSymbol{\Sha}{\mathord}{mcy}{"58}
\theoremstyle{definition}
\numberwithin{equation}{section}
\begin{document}

\title[On The Birch and Swinnerton-Dyer formula modulo squares]{On The Birch and Swinnerton-Dyer formula modulo squares for certain quadratic twists of elliptic curves}

\author{Alexander J. Barrios}
\address{Department of Mathematics, University of St. Thomas, St. Paul, MN 55105 USA}
\email{abarrios@stthomas.edu}

\author{Chung Pang Mok}
\address{Shanghai Institute for Mathematics and Interdisciplinary Sciences, Block A, International Innovation Plaza, No. 657 Songhu Road, Yangpu District, Shanghai, China}
\email{cpmok@simis.cn}

\subjclass{Primary 11G05, 11G40, 11G07, 14G10}

\begin{abstract}
Let $E/\mathbb{Q}$ be an elliptic curve with conductor $N=N_+N_-$, where $N_+$ and $N_-$ are coprime and $N_-$ is squarefree. Let $D$ be a positive fundamental discriminant satisfying the modified Heegner hypothesis with respect to $(N_+,N_-)$: primes dividing $N_+$ (resp. $N_-$) split (resp. are inert) in $\Q(\sqrt{D})$; we denote by $E^D/\mathbb{Q}$ the quadratic twist of $E/\mathbb{Q}$ by $D$. In the first half of the paper we consider the situation where $N_-$ is a squarefree product of an odd number of distinct primes, and we show the following: assuming that $E/\mathbb{Q}$ is of analytic rank zero (resp. one), and that the Birch and Swinnerton-Dyer formula holds for $E/\mathbb{Q}$ modulo $(\mathbb{Q}^{\times})^2$, then for those $D$ such that $E^D/\mathbb{Q}$ is of analytic rank one (resp. zero), we also have the validity of the Birch and Swinnerton-Dyer formula for $E^D/\mathbb{Q}$ modulo $(\mathbb{Q}^{\times})^2$. To show this, we establish auxiliary results without rank assumptions. The most difficult case is when $D$ is even, and our proof crucially relies on the recent classification of how local Tamagawa numbers change under quadratic twists. In the final part of the paper analogous results are also obtained in the other situation when $N_-$ is a squarefree product of an even number distinct primes, concerning the case when both $E/\mathbb{Q}$ and $E^D/\mathbb{Q}$ have analytic rank zero (resp. one). 

\bigskip

As a consequence of our work, we obtain that if $E/\mathbb{Q}$ is semistable with conductor $N$ and whose analytic rank is at most one, then for any positive fundamental discriminant $D$ that is coprime to $N$, such that $E^D/\mathbb{Q}$ again has analytic rank at most one, we have that the Birch and Swinnerton-Dyer formula modulo $(\mathbb{Q}^{\times})^2$ holds for $E/\mathbb{Q}$ if and only if it holds for $E^D/\mathbb{Q}$.
\end{abstract}
\maketitle

\section{Introduction}

Let $E/\mathbb{Q}$ be an elliptic curve defined over the field of rational numbers $\mathbb{Q}$. In this paper we are interested in the question: assuming the Birch and Swinnerton-Dyer formula \cite{BSD} (i.e. the full Birch and Swinnerton-Dyer conjecture) holds for $E/\mathbb{Q}$, then what can be said concerning the Birch and Swinnerton-Dyer formula for the various $E^D/\mathbb{Q}$, the quadratic twists of $E/\mathbb{Q}$ by fundamental discriminants $D$? 

\bigskip

At this level of generality, the problem is certainly a difficult one, and we will consider a more restricted setting in this paper. Our setup is as follows: fix an $E/\mathbb{Q}$ whose conductor $N$ is written in the form $N=N_+ N_-$, with $N_+$ and $N_-$ being relatively prime, and such that $N_-$ is squarefree (so in particular $E/\mathbb{Q}$ has multiplicative reduction at all primes dividing $N_-$). As for $D$, in this paper we consider only {\it positive} fundamental discriminants $D$ that satisfy the {\it modified Heegner hypothesis} with respect to $(N_+,N_-)$: all primes dividing $N_+$ split in $\mathbb{Q}(\sqrt{D})$, and all primes dividing $N_-$ are inert in $\mathbb{Q}(\sqrt{D})$ (in particular $D$ is relatively prime to $N$). The conductor of the quadratic twist $E^D/\mathbb{Q}$ is equal to $N \cdot D^2$.

\bigskip

To state our results, we first recall the complex analytic $L$-function $L(s,E/\mathbb{Q})$ associated to the elliptic curve $E/\mathbb{Q}$, which by the modularity theorem has analytic continuation to an entire function on the complex plane. Denote by $\epsilon(E/\mathbb{Q}) \in \{ \pm 1\}$ the sign of the functional equation for $L(s,E/\mathbb{Q})$; similarly for the quadratic twists $E^D/\mathbb{Q}$ we have the complex analytic $L$-function $L(s,E^D/\mathbb{Q})$ and the corresponding sign of the functional equation $\epsilon(E^D/\mathbb{Q}) \in \{\pm 1\}$. Then for those positive fundamental discriminants $D$ that satisfy the modified Heegner hypothesis with respect to $(N_+,N_-)$ as above, we have that $\epsilon(E^D/\mathbb{Q}) = -\epsilon(E/\mathbb{Q}) $ if $N_-$ is a squarefree product of an {\it odd} number of distinct primes, while $\epsilon(E^D/\mathbb{Q}) = \epsilon(E/\mathbb{Q}) $ if $N_-$ is a squarefree product of an {\it even} number of distinct primes.

\bigskip

In the first half of the paper, we consider the situation where $N_-$ is a squarefree product of an odd number of distinct primes (hence $E/\mathbb{Q}$ has at least one prime of multiplicative reduction).

\bigskip
Thus, if the order of vanishing of $L(s,E/\mathbb{Q})$ at $s=1$ is even (resp. odd), then the order of vanishing of $L(s,E^D/\mathbb{Q})$ at $s=1$ is odd (resp. even). We are interested in the case where the order of vanishing of $L(s,E/\mathbb{Q})$ at $s=1$ is zero and the order of vanishing of $L(s,E^D/\mathbb{Q})$ at $s=1$ is one (in which case $\epsilon(E/\mathbb{Q})=+1,\epsilon(E^D/\mathbb{Q})=-1$), respectively the case where the order of vanishing of $L(s,E/\mathbb{Q})$ at $s=1$ is one and the order of vanishing of $L(s,E^D/\mathbb{Q})$ at $s=1$ is zero (in which case $\epsilon(E/\mathbb{Q})=-1, \epsilon(E^D/\mathbb{Q})=+1$); the first case is equivalent to requiring that $L(1,E/\mathbb{Q}) \neq 0$, $L^{\prime}(1,E^D/\mathbb{Q}) \neq 0$, while the second case is equivalent to requiring that $L^{\prime}(1,E/\mathbb{Q}) \neq 0$, $L(1,E^D/\mathbb{Q}) \neq 0$.

\bigskip
By Friedberg-Hoffstein \cite{FH}, applied to the weight two cuspidal newform $f_E$ corresponding to $E/\mathbb{Q}$, we have in any case for $E/\mathbb{Q}$ with $\epsilon(E/\mathbb{Q}) =+1$ (resp. $-1$), that there exists infinitely many positive fundamental discriminants $D$ satisfying the modified Heegner hypothesis with respect to $(N_+,N_-)$ as above, such that $L^{\prime}(1,E^D/\mathbb{Q}) \neq 0$ (resp. $L(1,E^D/\mathbb{Q}) \neq 0$); remark that in the case when $\epsilon(E/\mathbb{Q})=-1$ then we can also use Murty-Murty, specifically Chapter 6 of \cite{MM}.

\bigskip

Here below, the order of vanishing of $L(s,E/\mathbb{Q})$ at $s=1$ will be referred to as the analytic rank of $E/\mathbb{Q}$ (similarly for $E^D/\mathbb{Q}$). The weak form of the Birch and Swinnerton-Dyer conjecture states that the rank of the Mordell-Weil group $E(\mathbb{Q})$ is equal to the analytic rank of $E/\mathbb{Q}$ for any elliptic curve $E/\mathbb{Q}$, and by the works of Gross-Zagier \cite{GrossZagier} and Kolyvagin \cite{Kolyvagin1}, the weak form of the Birch and Swinnerton-Dyer conjecture is known for any $E/\mathbb{Q}$ whose analytic rank is at most one. The full Birch and Swinnerton-Dyer conjecture, that we refer to as the Birch and Swinnerton-Dyer formula, gives a conjectural expression for the leading Taylor coefficient of $L(s,E/\mathbb{Q})$ at $s=1$, in terms of various arithmetic invariants of $E/\mathbb{Q}$: the regulator $\mathrm{Reg}(E/\mathbb{Q})$ of $E/\mathbb{Q}$, the order of the Shafarevich-Tate group $\Sha(E/\mathbb{Q})$ (conjectured to be finite, in which case it has to be a square by the Cassels' pairing), the order of the torsion subgroup of $E(\mathbb{Q})$, the local Tamagawa numbers $c_l(E/\mathbb{Q})$ at primes $l$ dividing the conductor $N$ of $E/\mathbb{Q}$ (which are positive integers and depends only on $E/\mathbb{Q}_l$, where $\mathbb{Q}_l$ is the field of $l$-adic numbers), and the real N\'{e}ron period $\Omega_{E/\mathbb{Q}}^+$ of $E/\mathbb{Q}$:

\bigskip
\[
\frac{1}{r!} L^{(r)}(1,E/\mathbb{Q}) = \frac{\# \Sha(E/\mathbb{Q}) }{(\# E(\mathbb{Q})_{\mathrm{tors}})^2}    \cdot \mathrm{Reg}(E/\mathbb{Q})\cdot  \prod_{l|N} c_l(E/\mathbb{Q}) \cdot \Omega_{E/\mathbb{Q}}^+,
\]
with $r$ being the analytic rank of $E/\mathbb{Q}$ (hence is equal to the rank of $E(\mathbb{Q})$ under the weak form of the conjecture).  

\bigskip
Recall also that by the works of Kolyvagin \cite{K} the order of $\Sha(E/\mathbb{Q})$ is known to be finite (and hence a square) when the analytic rank of $E/\mathbb{Q}$ is at most one.

\bigskip
We are interested in the validity of the Birch and Swinnerton-Dyer formula modulo multiplication by square of non-zero rational numbers (and we refer to this as saying that the formula holds modulo square of rational numbers). In this paper, we show the following result:

\bigskip
\begin{theorem}\label{1.1}
Fix $E/\mathbb{Q}$ whose conductor $N$ is written in the form $N=N_+N_-$ as above, with $N_-$ being a squarefree product of an odd number of distinct primes. Suppose that the analytic rank of $E/\mathbb{Q}$ is equal to zero (resp. one). Given a positive fundamental discriminant $D$ satisfying the modified Heegner hypothesis with respect to $(N_+,N_-)$, such that $E^D/\mathbb{Q}$ is of analytic rank one (resp. zero), we have that the Birch and Swinnerton-Dyer formula modulo square of rational numbers holds for $E^D/\mathbb{Q}$ if and only if it holds for $E/\mathbb{Q}$.
\end{theorem}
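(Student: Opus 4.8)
The plan is to reduce the claimed equivalence to the assertion that one explicit rational number is a perfect square, and then to evaluate that number modulo $(\mathbb{Q}^{\times})^2$ by comparing each Birch and Swinnerton-Dyer invariant of $E/\mathbb{Q}$ with that of $E^D/\mathbb{Q}$. Since both curves have analytic rank at most one, Kolyvagin's theorem \cite{Kolyvagin1} guarantees that $\Sha$ is finite for each, so $\#\Sha(E/\mathbb{Q})$ and $\#\Sha(E^D/\mathbb{Q})$ are honest squares and may be discarded; the torsion terms enter squared and likewise drop out modulo $(\mathbb{Q}^{\times})^2$. Treating the case $\mathrm{rank}(E/\mathbb{Q})=0$ and $\mathrm{rank}(E^D/\mathbb{Q})=1$ (the reverse case being symmetric), dividing the two Birch and Swinnerton-Dyer formulas shows that the equivalence of the two formulas modulo squares amounts to
\[
\frac{L(1,E/\mathbb{Q})}{L'(1,E^D/\mathbb{Q})}\cdot \mathrm{Reg}(E^D/\mathbb{Q})\cdot \frac{\Omega^+_{E^D/\mathbb{Q}}}{\Omega^+_{E/\mathbb{Q}}}\cdot \frac{\prod_{l}c_l(E^D/\mathbb{Q})}{\prod_{l}c_l(E/\mathbb{Q})}\in(\mathbb{Q}^{\times})^2 .
\]
Everything now rests on the four ratios on the left.

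Next I would dispose of the transcendental factors. Over $\mathbb{Q}(\sqrt{D})$ the curve $E^D$ is isomorphic to $E$ through the standard twisting map, under which the invariant differentials satisfy $\omega_{E^D}=\omega_E/\sqrt{D}$; comparing this model with the global minimal models of the two curves contributes only a rational correction $u\in\mathbb{Q}^{\times}$ supported on the primes where the twisted and minimal models differ, namely the primes dividing $2D$. Hence $\Omega^+_{E^D/\mathbb{Q}}/\Omega^+_{E/\mathbb{Q}}=u/\sqrt{D}$, and the irrational factor $1/\sqrt{D}$ must be cancelled by the $L$-value ratio. This cancellation, and the extraction of a rational square from $L(1,E/\mathbb{Q})\big/\big(L'(1,E^D/\mathbb{Q})\,\mathrm{Reg}(E^D/\mathbb{Q})\big)$, is exactly what the relevant special value formulas provide: the hypothesis that $N_-$ is a squarefree product of an \emph{odd} number of primes together with the modified Heegner hypothesis is precisely what makes available, over a suitable auxiliary imaginary quadratic field, a Gross--Zagier formula \cite{GrossZagier} expressing $L'(1,E^D/\mathbb{Q})$ through the N\'eron--Tate height of a Heegner point (hence through $\mathrm{Reg}(E^D/\mathbb{Q})$ up to a rational square and a period) and a Waldspurger-type central value formula expressing $L(1,E/\mathbb{Q})$ through a toric period on the quaternion algebra ramified at the primes of $N_-$ and $\infty$.

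Combining these formulas, the quotient in the display collapses, modulo $(\mathbb{Q}^{\times})^2$, to a product of purely local quantities: the period fudge factor $u$, the Tamagawa ratio $\prod_{l}c_l(E^D/\mathbb{Q})/\prod_{l}c_l(E/\mathbb{Q})$, and the explicit local factors coming from the two special value formulas. I am thus reduced to a local statement: this finite product of rational numbers is a square. For primes $l\nmid 2D$ the local contributions cancel, either because $l\nmid N$ and both curves have good reduction, or because $l\mid N_+$ and the split condition makes $D$ a square in $\mathbb{Q}_l^{\times}$, giving a local isomorphism $E\cong E^D$. At the primes $l\mid N_-$, which are inert in $\mathbb{Q}(\sqrt{D})$, $D$ is a local nonsquare and the multiplicative reduction of $E$ becomes additive in $E^D$; at odd primes $l\mid D$, where $E$ has good reduction, $E^D$ acquires additive reduction. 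In each of these cases the change in $c_l$ is controlled and can be read off from Tate's algorithm.

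The genuinely delicate case, and the one I expect to be the main obstacle, is $l=2$ when $D$ is even. Here the twisted minimal Weierstrass model, the $2$-adic part of the period fudge factor $u$, and the Tamagawa number $c_2$ all change in a correlated, model-dependent way that cannot be resolved by the generic analysis valid away from $2$. To control it I would invoke the recent classification of how local Tamagawa numbers transform under quadratic twist, showing that the $2$-adic parts of $u$ and of the Tamagawa ratio combine with the $2$-adic special value factors to yield a square. Establishing these local identities uniformly and in a rank-free manner, so that the same computation applies symmetrically regardless of which of $E/\mathbb{Q}$, $E^D/\mathbb{Q}$ has rank zero, is the crux of the argument.
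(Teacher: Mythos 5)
Your high-level architecture (discard $\Sha$ and torsion by Kolyvagin, reduce to squareness of an explicit ratio, use Pal's period comparison $\Omega^+_{E^D/\mathbb{Q}}/\Omega^+_{E/\mathbb{Q}}=u/\sqrt{D}$, and handle the $2$-adic case via the classification of Tamagawa numbers under twisting) does match the paper's. But the step you summarize as ``the quotient collapses, modulo $(\mathbb{Q}^{\times})^2$, to a product of purely local quantities'' is exactly where your route has a genuine gap, not just an omitted computation. You propose to evaluate the two $L$-values by \emph{separate} formulas: a Gross--Zagier formula for $L^{\prime}(1,E^D/\mathbb{Q})$ over an auxiliary imaginary quadratic field, and a Waldspurger-type formula for $L(1,E/\mathbb{Q})$. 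Those two formulas carry global automorphic invariants attached to \emph{different} objects: the first involves the newform $f_{E^D}$ of level $ND^2$ (its Petersson norm and the degree of its modular parametrization), the second the newform $f_E$ of level $N$ and quaternionic periods. Nothing forces these to cancel modulo rational squares; indeed, since any unconditional evaluation of a single one of these $L$-values modulo squares would amount to BSD modulo squares for that curve alone (which is not known), a correct proof \emph{must} couple the two curves, and your proposal contains no coupling mechanism. Comparing, say, $\deg_{E^D}$ with $\deg_E$, or $\langle f_{E^D},f_{E^D}\rangle$ with $\langle f_E,f_E\rangle$, modulo squares under quadratic twisting is essentially as deep as the theorem itself. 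The paper's mechanism is to work over the real quadratic field $F=\mathbb{Q}(\sqrt{D})$: the factorization $L(s,E/F)=L(s,E/\mathbb{Q})\,L(s,E^D/\mathbb{Q})$ makes the product $L(1,E/\mathbb{Q})\,L^{\prime}(1,E^D/\mathbb{Q})$ equal to the single leading coefficient $L^{\prime}(1,E/F)$, which is then evaluated (Theorem~\ref{2.1}) by applying Zhang's Gross--Zagier formula and Zhang's central value formula over $F$ to the \emph{same} base-change form $\mathbf{f}_E$ (so the transcendental Petersson norm $\langle \mathbf{f}_E,\mathbf{f}_E\rangle$ cancels between them), with auxiliary quadratic Hecke characters of $F$ supplied by Friedberg--Hoffstein, and with the residual ratio $\langle\Phi_E,\Phi_E\rangle/\deg_{E/F}$ computed by the degree formula $\deg_{E/F}=\prod_{q|N_-}c_q(E/F)\cdot\langle\Phi_E,\Phi_E\rangle \bmod{(\mathbb{Q}^{\times})^2}$ together with the explicit evaluation of $L(1,E/F,\delta)$ in terms of $(\Omega^+_{E/\mathbb{Q}})^2$. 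Some substitute for this entire coupling apparatus would have to be supplied before your outline becomes a proof.

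There is also a concrete local error. At primes $q\mid N_-$, which are inert in $\mathbb{Q}(\sqrt{D})$ and hence coprime to $D$, the twist is by an \emph{unramified} quadratic character, so $E^D$ still has multiplicative reduction at $q$ (split and non-split are interchanged); it does not become additive, contrary to what you assert (note the conductor of $E^D$ is $ND^2$, which $q$ divides exactly once). Additive reduction occurs only at the primes dividing $D$. The correct bookkeeping at the primes of $N_-$ --- the factors $\widetilde{c}_q(E/\mathbb{Q})\in\{1,2\}$ recording the parity of $v_q(\Delta_{\min})$ and the identity $\widetilde{c}_q(E/\mathbb{Q})\,c_q(E/F)=c_q(E/\mathbb{Q})\,c_q(E^D/\mathbb{Q})$ --- is an essential ingredient (Lemmas~\ref{3.2} and~\ref{3.3}), and it is these factors, together with $u_D$ and the Tamagawa numbers at primes dividing $D$, that must jointly form an even power of $2$ (Theorem~\ref{1.3}); the cancellation is a global statement about this product, proved by quadratic reciprocity and the twisting classification, not a prime-by-prime cancellation of the kind your sketch suggests.
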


\bigskip

Now we discuss the ingredients used for the proof of Theorem~\ref{1.1}. The first is a Gross-Zagier type formula modulo square of nonzero rational numbers. As before, $E/\mathbb{Q}$ is an elliptic curve whose conductor $N$ is written in the form $N=N_+N_-$, with $N_-$ a squarefree product of an odd number of distinct primes. Given a positive fundamental discriminant $D$ that satisfies the modified Heegner hypothesis with respect to $(N_+,N_-)$, denote by $F$ the real quadratic field $\mathbb{Q}(\sqrt{D})$, by $\mathcal{O}_F$ its ring of integers, and by $E/F$ the base change of $E/\mathbb{Q}$ to $F$. For primes $q|N_-$, we have that $q$ is inert in $F$, and we denote by $c_q(E/F)$ the local Tamagawa number of $E/F$ at the prime $q \mathcal{O}_F$. Finally The $L$-function of $E/F$ has the factorization:
\[
L(s,E/F) = L(s,E/\mathbb{Q}) \cdot L(s,E^D,\mathbb{Q})
\]

\bigskip

\begin{theorem}\label{1.2}
With notations as above, we have:
\begin{eqnarray}\label{e1.1}
& & L^{\prime}(1,E/F) \\ &=& \frac{1}{\sqrt{D}}  \prod_{q|N_-} c_q(E/F) \cdot \mathrm{height}_F(\mathbf{P}) \cdot (\Omega^+_{E/\mathbb{Q}})^2 \times \mbox{square of a nonzero rational number}, \nonumber
\end{eqnarray}
where $\mathbf{P} \in E(F) \otimes_{\mathbb{Z}} \mathbb{Q}$ and $\mathrm{height}_F(\mathbf{P})$ is the N\'{e}ron-Tate height function (for $E/F$) evaluated at $\mathbf{P}$. Furthermore $\mathbf{P}$ can be taken to be an element of $E(\mathbb{Q}) \otimes_{\mathbb{Z}} \mathbb{Q}$ if $L(1,E^D/\mathbb{Q}) \neq 0$ (in which case the $\mathbb{Q}$-vector space $E(\mathbb{Q}) \otimes_{\mathbb{Z}} \mathbb{Q}$ is one dimensional if in addition we have $L^{\prime}(1,E/\mathbb{Q}) \neq 0$), and an element of $E^D(\mathbb{Q}) \otimes_{\mathbb{Z}} \mathbb{Q}$ if we have $L(1,E/\mathbb{Q}) \neq 0$ (in which case the $\mathbb{Q}$-vector space $E^D(\mathbb{Q}) \otimes_{\mathbb{Z}} \mathbb{Q}$ is one dimensional if in addition we have $L^{\prime}(1,E^D/\mathbb{Q}) \neq 0$).  
\end{theorem}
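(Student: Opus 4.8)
The plan is to deduce the identity from a Gross--Zagier formula for $E$ over the real quadratic field $F$, and then to track every constant that appears modulo squares of nonzero rationals. Since $N_-$ is a squarefree product of an odd number of primes, all inert in $F$, and both archimedean places of $F$ are real, one can form the quaternion algebra over $F$ ramified exactly at the places above $N_-$ together with one of the two infinite places (an even number of ramified places in total); the modified Heegner hypothesis ensures that the associated Shimura curve carries CM points arising from a suitable CM extension of $F$. First I would invoke the Gross--Zagier formula (in the form available over totally real fields via Shimura curves, and in its BSD-compatible geometric normalization) to produce the image $\mathbf{P}\in E(F)\otimes_{\mathbb{Z}}\mathbb{Q}$ of such a CM point under the modular parametrization, together with an identity $L'(1,E/F)=\kappa\cdot\mathrm{height}_F(\mathbf{P})$, in which $\kappa$ is an explicit positive constant assembled from the archimedean period of $E/F$, the discriminant of $F$, certain rational normalizing constants (such as the modular degree and the Manin constant), and a product of purely local terms at the ramified primes.

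The core of the argument is then to show $\kappa\equiv\frac{1}{\sqrt D}\prod_{q\mid N_-}c_q(E/F)\cdot(\Omega^+_{E/\mathbb{Q}})^2$ modulo rational squares. Two observations drive this. First, because $E$ is defined over $\mathbb{Q}$, both real embeddings $F\hookrightarrow\mathbb{R}$ fix the Weierstrass coefficients and hence yield the very same real curve $E/\mathbb{R}$; consequently the archimedean period of $E/F$ equals $(\Omega^+_{E/\mathbb{Q}})^2$ up to the square class of $N_{F/\mathbb{Q}}(\lambda)$, where $\lambda\in F^{\times}$ measures the discrepancy between a global minimal differential of $E/F$ and the base change of one for $E/\mathbb{Q}$ (the two real places also see the same number of connected components, so that factor is automatically a perfect square), while the field discriminant $|d_F|=D$ contributes the factor $1/\sqrt D$. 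Second, at each inert prime $q\mid N_-$ the local term of the formula reduces, after base change to the unramified quadratic extension $F_q/\mathbb{Q}_q$, to the Tamagawa number $c_q(E/F)$; at the split primes dividing $N_+$ the two primes of $F$ above each contribute equal factors and so give a rational square, and the remaining rational normalizing constants are either square or cancel in pairs. Assembling these yields the displayed identity.

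To justify the final assertions I would use the Galois action of $\mathrm{Gal}(F/\mathbb{Q})$ on $E(F)\otimes_{\mathbb{Z}}\mathbb{Q}$, which decomposes it as the direct sum of the $(+1)$-eigenspace $E(\mathbb{Q})\otimes_{\mathbb{Z}}\mathbb{Q}$ and the $(-1)$-eigenspace, canonically identified with $E^D(\mathbb{Q})\otimes_{\mathbb{Z}}\mathbb{Q}$. If $L(1,E^D/\mathbb{Q})\neq 0$, then by the theorem of Gross--Zagier \cite{GrossZagier} and Kolyvagin \cite{Kolyvagin1} the group $E^D(\mathbb{Q})$ is finite, so the minus eigenspace vanishes and $\mathbf{P}$ lies in $E(\mathbb{Q})\otimes_{\mathbb{Z}}\mathbb{Q}$; if in addition $L'(1,E/\mathbb{Q})\neq 0$, then $L'(1,E/F)=L'(1,E/\mathbb{Q})\cdot L(1,E^D/\mathbb{Q})\neq 0$ forces $\mathbf{P}$ to be nontorsion and, again by \cite{GrossZagier,Kolyvagin1}, $\mathrm{rank}\,E(\mathbb{Q})=1$, so this eigenspace is one-dimensional. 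Interchanging the roles of $E$ and $E^D$ gives the symmetric statement under $L(1,E/\mathbb{Q})\neq 0$ and $L'(1,E^D/\mathbb{Q})\neq 0$.

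The step I expect to be the main obstacle is the square-class bookkeeping when $D$ is even, that is, when $F$ is ramified at $2$. There the base change of the minimal model of $E/\mathbb{Q}$ need not remain minimal above $2$, so the correction $N_{F/\mathbb{Q}}(\lambda)$ entering the period can acquire a nontrivial square class, and the reduction of the local terms at $2$ and at the primes $q\mid N_-$ is no longer formal. Controlling these contributions is precisely where the recent classification of the behaviour of local Tamagawa numbers under quadratic twists is needed; for odd $D$, by contrast, $F/\mathbb{Q}$ is unramified outside $N$ and every comparison above is immediate.
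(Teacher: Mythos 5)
Your third paragraph (the eigenspace decomposition via Kolyvagin, giving the refinements about $E(\mathbb{Q})\otimes\mathbb{Q}$ and $E^D(\mathbb{Q})\otimes\mathbb{Q}$) is correct and agrees with the paper. But the core of your argument rests on a formula that does not exist. First, a structural omission: Gross--Zagier formulas on Shimura curves over a totally real field $F$ (Zhang, in the explicit form of Theorem 1.5 of \cite{CST}) compute $L^{\prime}(1,E/K,\cdot)$ for $K$ a \emph{CM extension} of $F$, never $L^{\prime}(1,E/F)$ itself. To reach $L^{\prime}(1,E/F)$ one must choose an auxiliary quadratic Hecke character $\delta_1$ of $F$, odd at both infinite places and with prescribed behaviour at primes dividing $N$, such that $L(1,E/F,\delta_1)\neq 0$ (existence by \cite{FH}); then $L^{\prime}(1,E/K,\delta_K)=L^{\prime}(1,E/F)\cdot L(1,E/F,\delta_1)$, and one is left with the nontrivial task of evaluating $L(1,E/F,\delta_1)$ modulo squares. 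Your proposal applies ``the Gross--Zagier formula'' to $L^{\prime}(1,E/F)$ directly and never confronts this.

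Second, and more seriously, the ``BSD-compatible geometric normalization'' you invoke --- a constant $\kappa$ built from the archimedean period of $E/F$, the modular degree and the Manin constant, plus local terms at ramified primes --- is precisely what is \emph{not} available over $F$. The known explicit formula has constant $\frac{(8\pi^2)^2}{(\mathcal{N}_{F/\mathbb{Q}}D_{K/F})^{1/2}}\cdot\langle \mathbf{f}_E,\mathbf{f}_E\rangle/\deg_{E/F}$, where $\langle \mathbf{f}_E,\mathbf{f}_E\rangle$ is the Petersson norm of the Hilbert newform and $\deg_{E/F}$ is the Shimura-curve modular degree; converting this ratio into N\'eron periods modulo squares is the actual difficulty, since Shimura curves have no cusps or $q$-expansions and the modular-curve relation between Petersson norm, degree and periods has no unconditional analogue. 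The paper circumvents this by a cancellation: it introduces a \emph{second} auxiliary character $\delta$ (even at infinity, $L(1,E/F,\delta)\neq 0$) and divides the Gross--Zagier formula by Zhang's central-value formula (Theorem 1.10 of \cite{CST}), so that $\langle\mathbf{f}_E,\mathbf{f}_E\rangle$ cancels; the period $(\Omega^+_{E/\mathbb{Q}})^2/\sqrt{D}$ then enters through the explicit evaluation of the rank-zero value $L(1,E/F,\delta)$ (Theorem 3.2 of \cite{Mok2}), and the factor $\prod_{q|N_-}c_q(E/F)$ enters through the Ribet--Takahashi-type identity $\deg_{E/F}=\prod_{q|N_-}c_q(E/F)\cdot\langle\Phi_E,\Phi_E\rangle$ modulo squares (Lemma 3.4 of \cite{Mok2}), not through ``local terms'' in the height identity, which has none. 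Your period-comparison and local reductions therefore have no formula to act on. A final, smaller point: the even-$D$ difficulty and the classification \cite{BRSTTW} are needed for Theorem~\ref{1.3} (Section~\ref{Sec4}), not for Theorem~\ref{1.2}, whose proof is uniform in $D$; your closing paragraph conflates the two results.
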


\bigskip

Formula \eqref{e1.1} is established in \cite{Mok1}, in the special case when $N_-$ is an odd prime $p$, in the context of a complex analytic Gross-Zagier formula modulo square of rational numbers for Stark-Heegner points. The proof for the general case (namely that $N_-$ is a squarefree product of an odd number of distinct primes) is essentially the same and it is given in the next section (in the more general context of narrow genus class characters of $F$). 

\bigskip

For the second ingredient, we first introduce some further notations. For the moment, we come back to the general setting where $N_-$ is only required to be squarefree and relatively prime to $N_+$. With $E/\mathbb{Q}$ is as before, we have for primes $q|N_-$, that $E/\mathbb{Q}$ has multiplicative reduction (split or non-split) at $q$. We denote by $\widetilde{c}_q(E/\mathbb{Q}) \in \{1,2\}$, determined by the condition that $\widetilde{c}_q(E/\mathbb{Q}) =1$ (resp. $2$) if the $q$-valuation of the minimal discriminant of $E/\mathbb{Q}$ is odd (resp. even).

\bigskip

Next, by the results of \cite{Pal}, we have the following relation between the real N\'{e}ron period $\Omega_{E/\mathbb{Q}}^+$ of $E/\mathbb{Q}$, and the real N\'{e}ron period $\Omega_{E^D/\mathbb{Q}}^+$ of $E^D/\mathbb{Q}$:

\[
\Omega_{E^D/\mathbb{Q}}^+ = \frac{u_D}{\sqrt{D}}  \Omega_{E/\mathbb{Q}}^+,
\]
with $u_D \in \{1,2,4,8\}$ ($u_D =1$ if $D$ is odd). 

\bigskip

The second ingredient for the proof of Theorem~\ref{1.1} is then the following result; here no assumption on the rank of $E/\mathbb{Q}$ or $E^D/\mathbb{Q}$ is needed, and so is of independent interest; in the following for primes $l|D$ the quantity $c_l(E^D/\mathbb{Q})$ is the local Tamagawa number of $E^D/\mathbb{Q}$ at the prime $l$; we have $c_l(E^D/\mathbb{Q}) \in \{1,2,4\}$.

\bigskip
\begin{theorem}\label{1.3}
As before $E/\mathbb{Q}$ is an elliptic curve whose conductor $N$ is written in the form $N=N_+N_-$, with $N_+,N_-$ relatively prime and $N_-$ squarefree. Given a positive fundamental discriminant $D$ that satisfies the modified Heegner hypothesis with respect to $(N_+,N_-)$, we have that the quantity:
\begin{eqnarray}\label{e1.2}
\frac{u_D}{2^{\omega(N_-)}} \cdot \prod_{l|D} c_l(E^D/\mathbb{Q}) \cdot   \prod_{q|N_-} \widetilde{c}_q(E/\mathbb{Q})
\end{eqnarray}
is an even power of $2$. Here $\omega(\cdot)$ is the prime omega function counting the number of distinct prime factors.
\end{theorem}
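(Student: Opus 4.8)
The plan is to reduce the statement to a parity computation of a single $2$-adic valuation and then to identify the contributions of the various primes via Tate's algorithm, the period relation of \cite{Pal}, and global quadratic reciprocity. First, observe that every factor in \eqref{e1.2} is a power of $2$: $u_D$ is a power of $2$ by hypothesis, each $c_l(E^D/\mathbb{Q})$ lies in $\{1,2,4\}$, each $\widetilde{c}_q(E/\mathbb{Q})$ lies in $\{1,2\}$, and $2^{\omega(N_-)}$ is manifestly a power of $2$. Hence it suffices to prove that
\[
v_2(u_D) - \omega(N_-) + \sum_{l\mid D} v_2\!\left(c_l(E^D/\mathbb{Q})\right) + \sum_{q\mid N_-} v_2\!\left(\widetilde{c}_q(E/\mathbb{Q})\right)
\]
is even. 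For the primes $q\mid N_-$, since $\widetilde{c}_q$ equals $1$ or $2$ according as $v_q(\Delta_{\min}(E/\mathbb{Q}))$ is odd or even, one has $v_2(\widetilde{c}_q)-1\equiv v_q(\Delta_{\min}(E/\mathbb{Q}))\pmod 2$, so the entire $N_-$-contribution is congruent to $\sum_{q\mid N_-} m_q$ modulo $2$, where $m_q:=v_q(\Delta_{\min}(E/\mathbb{Q}))$ is the number of components of the Néron model at the multiplicative prime $q$.

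Next I would analyze the odd primes $l\mid D$. There $E/\mathbb{Q}$ has good reduction while $E^D/\mathbb{Q}$ becomes additive; since $D$ is a fundamental discriminant, $v_l(D)=1$, so $v_l(c_4(E^D))=2$ and $v_l(\Delta(E^D))=6$, and the reduction is of Kodaira type $I_0^*$. For type $I_0^*$ the Tamagawa number equals $1,2,$ or $4$ according as the associated cubic has $0,1,$ or $3$ roots in $\mathbb{F}_l$; modulo $l$ this cubic is the one defining $E[2]$ for the good reduction of $E$, whose discriminant agrees with $\Delta_{\min}(E/\mathbb{Q})$ up to a square. Consequently $v_2(c_l(E^D/\mathbb{Q}))$ is odd exactly when this cubic has a single root, i.e.\ when $\left(\frac{\Delta_{\min}(E/\mathbb{Q})}{l}\right)=-1$, so that $\sum_{l\mid D,\ l\text{ odd}} v_2(c_l(E^D/\mathbb{Q}))$ has the parity of the Jacobi symbol $\left(\frac{\Delta_{\min}(E/\mathbb{Q})}{D_{\mathrm{odd}}}\right)$, where $D_{\mathrm{odd}}$ is the odd part of $D$.

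The case $D$ odd I would then close by global reciprocity. Here $u_D=1$, $D_{\mathrm{odd}}=D\equiv 1\pmod 4$, and $\left(\frac{-1}{D}\right)=1$, so reciprocity rewrites $\left(\frac{\Delta_{\min}(E/\mathbb{Q})}{D}\right)$ as a product of symbols $\left(\frac{D}{p}\right)^{v_p(\Delta_{\min})}$ over odd $p\mid N$ together with a factor $\left(\frac{2}{D}\right)^{v_2(\Delta_{\min})}$. The primes $p\mid N_+$ split in $\mathbb{Q}(\sqrt{D})$ and contribute $+1$; the primes $q\mid N_-$ are inert and contribute $(-1)^{m_q}$; and when $2\mid N$ the modified Heegner condition at $2$ pins down $\left(\frac{2}{D}\right)$ to the value needed to match the $q=2$ term (namely $\left(\frac{2}{D}\right)=-1$ when $2\mid N_-$, since then $D\equiv 5\pmod 8$). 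This gives $\left(\frac{\Delta_{\min}(E/\mathbb{Q})}{D}\right)=\prod_{q\mid N_-}(-1)^{m_q}$, which is precisely the required congruence when $u_D=1$.

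The remaining case $D$ even is where I expect the main obstacle. Now $u_D\in\{2,4,8\}$, and there is an extra Tamagawa factor $c_2(E^D/\mathbb{Q})\in\{1,2,4\}$ from the additive reduction of $E^D$ at $2$, whose Kodaira type depends delicately on the reduction type of $E$ at $2$ and on the residue of $D$ modulo a power of $2$. I would compute the combined local term $v_2(u_D)+v_2(c_2(E^D/\mathbb{Q}))$ by invoking the classification of how local Tamagawa numbers change under quadratic twist together with the period relation of \cite{Pal}, while treating the odd $l\mid D$ exactly as above. Because $D\not\equiv 1\pmod 4$, the reciprocity step now produces additional sign factors from $\left(\frac{-1}{\cdot}\right)$ and $\left(\frac{2}{\cdot}\right)$, and the crux is to verify that these signs, the inert/split conditions at the odd primes dividing $N$, and the combined $2$-adic local contribution assemble into an even total. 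Organizing this verification into the finitely many cases for the reduction type of $E$ at $2$ and the class of $D$ modulo $8$ (or $16$) is the technical heart of the argument.
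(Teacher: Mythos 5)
Your reduction to a parity count, your treatment of the primes $q\mid N_-$, and your analysis of the odd primes $l\mid D$ (type $\mathrm{I}_0^*$, Tamagawa number $1+\#\{\text{roots of the cubic in }\mathbb{F}_l\}$, hence a square exactly when $\left(\frac{\Delta}{l}\right)=1$), together with the quadratic-reciprocity computation for odd $D$, coincide with the paper's own argument; that half of your proposal is correct. But the even-$D$ case is a genuine gap, not a verification you may defer: it is precisely the new content of the theorem (the odd case is essentially the earlier result of \cite{Mok1}, as the paper notes). One concrete error: your assertion that $u_D\in\{2,4,8\}$ when $D$ is even is false. Since $D$ is coprime to $N$, the curve $E$ has good reduction at $2$, and the paper shows (combining \cite{Pal} with the explicit twist model) that $u_D=1$ whenever $v_2(D)\leq 2$, while for $v_2(D)=3$ one has $u_D\in\{1,2\}$ according as $v_2(c_6)$ equals $0$ or $3$. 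Getting this exactly right matters, because $v_2(u_D)$ enters the parity you are trying to control.

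More seriously, in the hardest case $v_2(D)=3$ with $v_2(c_6)=0$, where $E^D$ has type $\mathrm{I}_8^*$ at $2$, the value $c_2(E^D/\mathbb{Q})\in\{2,4\}$ is \emph{not} determined by the Kodaira type of $E^D$ together with $D$ modulo $8$ or $16$ --- and ``the reduction type of $E$ at $2$'' carries no information at all, since $E$ has good reduction there. By the classification in \cite{BRSTTW}, $c_2(E^D/\mathbb{Q})$ is decided by congruences modulo $32$ on the coefficients of a $2$-strongly-minimal model of $E$ (the $2$-adic valuations of the quantities $P_1,P_2$ in the paper's notation), and the crux of the proof is that these congruences determine, and agree with, the pair $\left(\Delta \bmod 8,\ m \bmod 4\right)$ that controls $\left(\frac{\Delta}{m}\right)$ via reciprocity. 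The paper establishes this correlation by an exhaustive finite check over $(\mathbb{Z}/32\mathbb{Z})^6$, carried out in computer code and cited as \cite{GitHub}; nothing in your outline produces it, and without it the two parities you need to cancel are computed from a priori unrelated data. So your proposal reproduces the easy half of the theorem and leaves exactly its difficult half unproven.
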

\bigskip
Theorem~\ref{1.3} is established in \cite{Mok1} in the special case where $N_-$ is an odd prime and $D$ is odd, and it is conjectured there, {\it c.f.} Conjecture 4.1 of {\it loc. cit.} that it holds for even $D$ also (at least in the case where $N_-$ is an odd prime). The case of odd $D$ is much simpler due to the fact that, on the one hand one then has $u_D=1$, and on the other hand, there is an explicit description of the local Tamagawa numbers $c_l(E^D/\mathbb{Q})$ at primes $l|D$ (Proposition 5 of \cite{Rubin07}), and the proof in the case where $D$ is odd essentially follows directly from the quadratic reciprocity law for Jacobi symbols. The case of even $D$ presents much more serious difficulties, and our proof in this paper crucially relies on the recent classification \cite{BRSTTW} of how local Tamagawa numbers change under quadratic twists.

\bigskip

Theorem~\ref{1.1} is then an immediate consequence of Theorem~\ref{1.2} and Theorem~\ref{1.3}, by following the arguments of Section~\ref{Sec4} of \cite{Mok1}, where Theorem~\ref{1.1} is established in {\it loc. cit.} (though not explicitly stated in this way) in the special case where $N_-$ is an odd prime, and $D$ is odd. 

\bigskip

Finally, we deal with the case where $N_-$ is a squarefree product of an even number of distinct primes (in particular, $N_-=1$ is allowed). As we have seen, in this situation we have that $\epsilon(E/\mathbb{Q}) =\epsilon(E^D/\mathbb{Q})$, and the order of vanishing of both $L(s,E/\mathbb{Q})$ and $L(s,E^D/\mathbb{Q})$ at $s=1$ have the same parity. We are interested in the case when the analytic ranks of both $E/\mathbb{Q}$ and $E^D/\mathbb{Q}$ are equal to zero, and also the case when they are both equal to one. We show:

\bigskip

\begin{theorem}\label{1.4}
Fix $E/\mathbb{Q}$ whose conductor $N$ is written in the form $N=N_+N_-$ as above, with $N_-$ being a squarefree product of an even number of distinct primes. Suppose that the analytic rank of $E/\mathbb{Q}$ is equal to zero. Given positive fundamental discriminant $D$ satisfying the modified Heegner hypothesis with respect to $(N_+,N_-)$, such that $E^D/\mathbb{Q}$ is also of analytic rank zero, we have that the Birch and Swinnerton-Dyer formula modulo square of rational numbers holds for $E^D/\mathbb{Q}$ if and only if it holds for $E/\mathbb{Q}$. A similar result is valid concerning the case when both $E/\mathbb{Q}$ and $E^D/\mathbb{Q}$ have analytic rank one, under the additional assumption that $E/\mathbb{Q}$ has at least one prime of multiplicative reduction.
\end{theorem}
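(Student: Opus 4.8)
The plan is to mirror the structure of the odd case: reduce the comparison of Birch and Swinnerton-Dyer formulae to a central-value formula over $F=\Q(\sqrt D)$ modulo $(\Q^{\times})^2$, together with Theorem~\ref{1.3} (which is stated for arbitrary squarefree $N_-$, hence applies when $\omega(N_-)$ is even). For the rank-zero case I would first establish the rank-zero analogue of Theorem~\ref{1.2}, namely a Waldspurger-type central value formula
\begin{equation*}
L(1,E/F) \;=\; \frac{1}{\sqrt D}\,\prod_{q\mid N_-} c_q(E/F)\cdot (\Omega^+_{E/\mathbb{Q}})^2 \times \text{square of a nonzero rational number},
\end{equation*}
valid when $L(1,E/\mathbb{Q})\neq 0$ and $L(1,E^D/\mathbb{Q})\neq 0$; its proof should run parallel to that of Theorem~\ref{1.2} (now in the even, rather than odd, ramification case), with the Heegner-point height replaced by the fact that the relevant quaternionic period is the central value itself. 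Using $L(1,E/F)=L(1,E/\mathbb{Q})\,L(1,E^D/\mathbb{Q})$ and the period relation $\Omega^+_{E^D/\mathbb{Q}}=\tfrac{u_D}{\sqrt D}\,\Omega^+_{E/\mathbb{Q}}$ from \cite{Pal}, I would form the product of the two BSD ratios and check that the factors $\sqrt D$ and $(\Omega^+_{E/\mathbb{Q}})^2$ cancel.

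More precisely, writing the rank-zero BSD defect of $E/\mathbb{Q}$ as the class of $L(1,E/\mathbb{Q})/\big(\prod_{l\mid N}c_l(E/\mathbb{Q})\cdot\Omega^+_{E/\mathbb{Q}}\big)$ in $\Q^{\times}/(\Q^{\times})^2$ (the Shafarevich--Tate and torsion contributions being squares), the product of the defects of $E/\mathbb{Q}$ and $E^D/\mathbb{Q}$ reduces modulo squares to
\begin{equation*}
\frac{\prod_{q\mid N_-}c_q(E/F)}{u_D\,\prod_{l\mid N}c_l(E/\mathbb{Q})\,\prod_{l\mid ND^2}c_l(E^D/\mathbb{Q})}.
\end{equation*}
Here I would analyse the local factors: for $p\mid N_+$ one has $p$ split in $F$, so $E^D\cong E$ over $\Q_p$ and $c_p(E^D/\mathbb{Q})=c_p(E/\mathbb{Q})$, contributing a square; for $q\mid N_-$ one has $c_q(E/F)=v_q(\Delta_{\min})$ while $c_q(E/\mathbb{Q})\,c_q(E^D/\mathbb{Q})=v_q(\Delta_{\min})\cdot\widetilde c_q(E/\mathbb{Q})$, so the $q$-contribution is $\widetilde c_q(E/\mathbb{Q})^{-1}$. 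The displayed quantity therefore equals, modulo squares, the inverse of $u_D\,\prod_{l\mid D}c_l(E^D/\mathbb{Q})\,\prod_{q\mid N_-}\widetilde c_q(E/\mathbb{Q})$, which by Theorem~\ref{1.3} is $2^{\omega(N_-)}$ times an even power of $2$; since $\omega(N_-)$ is \emph{even}, this is a perfect square. Hence the two defects coincide in $\Q^{\times}/(\Q^{\times})^2$, proving the rank-zero assertion. The absence of a height factor is exactly what is needed: in the odd case the N\'eron--Tate height over $F$ supplies an extra factor of $2$ that compensates the odd power $2^{\omega(N_-)}$.

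For the rank-one case I would not seek a central value formula for $E/F$ directly, since $E/F$ then has analytic rank two; instead I would bootstrap to the already-proven rank-zero even case and to Theorem~\ref{1.1}. Using the hypothesis that $E/\mathbb{Q}$ has a prime $q_0$ of multiplicative reduction, regroup the conductor so that $N_-^{(1)}=q_0$ has an \emph{odd} number of prime factors, and choose an auxiliary positive fundamental discriminant $D_1$, coprime to $ND$, with $q_0$ inert and every other prime of $N$ as well as every prime of $D$ split in $\Q(\sqrt{D_1})$. Then twisting by $D_1$ flips the sign of the functional equation, and by a simultaneous non-vanishing result I would arrange that both $E^{D_1}/\mathbb{Q}$ and $E^{DD_1}/\mathbb{Q}$ have analytic rank zero. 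Theorem~\ref{1.1} (odd case) applied to $(E,E^{D_1})$ and to $(E^D,E^{DD_1})$, together with the rank-zero even case applied to the $D$-twist pair $(E^{D_1},E^{DD_1})$ --- whose relevant inert part is again $N_-$, with an even number of primes --- yields the chain of equivalences
\begin{equation*}
\mathrm{BSD}(E/\mathbb{Q})\iff \mathrm{BSD}(E^{D_1}/\mathbb{Q})\iff \mathrm{BSD}(E^{DD_1}/\mathbb{Q})\iff \mathrm{BSD}(E^{D}/\mathbb{Q}),
\end{equation*}
each modulo $(\Q^{\times})^2$. The main obstacles are two. First, establishing the rank-zero central value formula above with the correct normalisation modulo squares: one must verify that the constant of proportionality --- coming from the comparison of the (definite) quaternionic period with $(\Omega^+_{E/\mathbb{Q}})^2/\sqrt D$ and from the local factors at $N_-$ --- is itself a square, the even-ramification analogue of the delicate period computation underlying Theorem~\ref{1.2}. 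Second, in the rank-one case, producing a single auxiliary $D_1$ for which $L(1,E^{D_1}/\mathbb{Q})\neq 0$ and $L(1,E^{DD_1}/\mathbb{Q})\neq 0$ \emph{simultaneously} while meeting all the splitting conditions; this requires a simultaneous non-vanishing theorem for the two distinct quadratic-twist families attached to $E$ and to $E^D$, going slightly beyond the single-family statement of Friedberg--Hoffstein.
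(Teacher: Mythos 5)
Your rank-zero reduction is arithmetically correct \emph{granting} your displayed formula for $L(1,E/F)$: the cancellation of $\sqrt{D}$ and the periods, the identity $c_q(E/\mathbb{Q})\,c_q(E^D/\mathbb{Q})=c_q(E/F)\,\widetilde{c}_q(E/\mathbb{Q})$ at $q\mid N_-$, and the observation that $2^{\omega(N_-)}$ is now a square are all exactly the right ingredients. But that formula is itself the first genuine gap, and its proof cannot ``run parallel'' to that of Theorem~\ref{1.2}: the proof of Theorem~\ref{2.1} takes place on the Shimura curve attached to the quaternion algebra over $F$ ramified at $N_-\mathcal{O}_F$ and exactly one archimedean place, and such an algebra exists only when $\omega(N_-)+1$ is even, i.e.\ when $\omega(N_-)$ is odd. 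In the even case the genus character $\chi_F$ cuts out a totally real (not CM) extension of $F$, so Zhang's formulas as in \cite{CST} do not apply; one would need Popa's formula in an explicit form modulo squares that carries the factor $\prod_{q\mid N_-}c_q(E/F)$, and this requires a period/degree comparison for the indefinite quaternion algebra ramified at $N_-$ (the analogue of Lemma 3.4 of \cite{Mok2}) which is not available off the shelf. The paper sidesteps this entirely: Popa's formula (Theorem 5.4.1 of \cite{Popa}) is invoked only when $N_-=1$, where no Tamagawa factors occur, and for $N_-\neq 1$ the rank-zero statement (Theorem~\ref{5.1}) is proved by picking one prime $p\mid N_-$, regrouping $N=N_+'N_-'$ with $N_-'=p$ and $N=N_+''N_-''$ with $N_-''=N_-/p$ (both with an odd number of prime factors), choosing a single auxiliary even character $\chi_3$ with $L'(1,E/\mathbb{Q},\chi_3)\neq 0$, and applying the odd-parity Theorem~\ref{3.4} twice, to the pairs $(\chi_1,\chi_3)$ and $(\chi_2,\chi_3)$.

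The second gap is in your rank-one chain $E\iff E^{D_1}\iff E^{DD_1}\iff E^{D}$, and it is twofold. First, as you yourself flag, you need $L(1,E^{D_1}/\mathbb{Q})\neq 0$ and $L(1,E^{DD_1}/\mathbb{Q})\neq 0$ \emph{simultaneously} for one and the same $D_1$; this is not provided by \cite{FH} and is a serious analytic input, not a technicality. Second, and you did not flag this: the middle link applies your even rank-zero result to the base curve $E^{D_1}$, whose conductor is $ND_1^2$, twisted by $D$; since the primes of $D_1$ divide this conductor to order $\geq 2$, they must lie in the ``plus part'' of any admissible decomposition, so the modified Heegner hypothesis forces every prime dividing $D_1$ to \emph{split in} $\mathbb{Q}(\sqrt{D})$. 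That is a condition on the support of $D_1$, not a condition on $\chi_{D_1}$ at finitely many places, so Friedberg--Hoffstein cannot impose it; and it is not equivalent to the condition you did impose (primes of $D$ split in $\mathbb{Q}(\sqrt{D_1})$) --- prime-by-prime these Kronecker-symbol conditions differ unless, say, $D_1$ is prime. Both problems dissolve under the paper's key structural device, which your proposal lacks: the odd-parity theorem is proved there in the generalized form Theorem~\ref{3.4}, for a \emph{pair} of characters $(\chi_1,\chi_2)$ twisting the common curve $E$, with hypotheses only at the primes dividing $N$ (plus condition (*)). With that version, the chain in Theorem~\ref{5.2} is $E^{D_1}\iff E^{D_3}\iff E^{D_2}$ through a single auxiliary twist $E^{D_3}$ of rank zero: each link is one application of Theorem~\ref{3.4} (to $(\chi_1,\chi_3)$ and to $(\chi_2,\chi_3)$, relative to the regroupings $N_-'=p$ and $N_-''=N_-/p$, or to $N_-'=p$ for a prime $p$ of multiplicative reduction dividing $N_+$ when $N_-=1$), no splitting conditions at the primes of $D_1,D_2,D_3$ are ever needed, and only the single non-vanishing $L(1,E/\mathbb{Q},\chi_3)\neq 0$ is required.
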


\bigskip

The novel feature in the proof of Theorem~\ref{1.4} (given as Theorem~\ref{5.1} and Theorem~\ref{5.2}), at least in the case when $E/\mathbb{Q}$ has at least one prime of multiplicative reduction, is that we have to rely on Theorem~\ref{1.1}, and more precisely its more general version, namely Theorem~\ref{3.4}, which concerns the case where $N_-$ has an odd number of prime factors, to deal with the case where $N_-$ has an even number of prime factors.

\bigskip

From Theorems~\ref{1.1} and~\ref{1.4}, we then obtain the following Corollary~\ref{1.5} (the more general version is stated as Theorem~\ref{5.3}). Here, for this corollary, $E/\mathbb{Q}$ is semistable, but we do not fix a factorization $N=N_+ N_-$ of the conductor $N$ of $E/\mathbb{Q}$, i.e., the choice of $N_+,N_-$ depends on the choice of $D$.

\bigskip

\begin{corollary}\label{1.5}
Let $E/\mathbb{Q}$ be a semistable elliptic curve with conductor $N$, whose analytic rank is at most one. Then for any positive fundamental discriminant $D$ that is relatively prime to $N$, such that $E^D/\mathbb{Q}$ again has analytic rank at most one, we have that the Birch and Swinnerton-Dyer formula modulo square of rational numbers holds for $E/\mathbb{Q}$ if and only if it holds for $E^D/\mathbb{Q}$.
\end{corollary}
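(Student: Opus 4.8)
The plan is to reduce the semistable case, where no factorization $N = N_+ N_-$ is fixed in advance, to the already-established Theorems~\ref{1.1} and~\ref{1.4} by choosing the factorization appropriately depending on $D$. Since $E/\mathbb{Q}$ is semistable, every prime $l \mid N$ is a prime of multiplicative reduction, so $N$ itself is squarefree. Given a positive fundamental discriminant $D$ coprime to $N$, each prime $l \mid N$ either splits or is inert in $\mathbb{Q}(\sqrt{D})$ (it cannot ramify, as $D$ is coprime to $N$). First I would define $N_+$ to be the product of those primes $l \mid N$ that split in $\mathbb{Q}(\sqrt{D})$, and $N_-$ to be the product of those primes $l \mid N$ that are inert in $\mathbb{Q}(\sqrt{D})$. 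With this choice $N = N_+ N_-$, the factors are coprime and squarefree, and $D$ satisfies the modified Heegner hypothesis with respect to $(N_+, N_-)$ by construction. This choice of factorization is the key device that makes the abstract hypotheses of the earlier theorems applicable to every admissible $D$.

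Next I would separate into cases according to the parity of $\omega(N_-)$, the number of distinct prime factors of $N_-$. If $\omega(N_-)$ is odd, then by the sign computation recalled in the introduction we have $\epsilon(E^D/\mathbb{Q}) = -\epsilon(E/\mathbb{Q})$, so the analytic ranks of $E/\mathbb{Q}$ and $E^D/\mathbb{Q}$ have opposite parity. Under the hypothesis that both ranks are at most one, this forces one of them to be zero and the other to be one, placing us exactly in the setting of Theorem~\ref{1.1}, which then yields the equivalence of the Birch and Swinnerton-Dyer formula modulo squares for $E/\mathbb{Q}$ and $E^D/\mathbb{Q}$. If instead $\omega(N_-)$ is even, then $\epsilon(E^D/\mathbb{Q}) = \epsilon(E/\mathbb{Q})$ and the two analytic ranks have the same parity; under the at-most-one hypothesis they are either both zero or both one, and Theorem~\ref{1.4} applies. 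Here I would note that in the rank-one subcase Theorem~\ref{1.4} requires $E/\mathbb{Q}$ to have at least one prime of multiplicative reduction, which is automatic in the semistable setting provided $N > 1$; the case $N = 1$ does not occur for an elliptic curve over $\mathbb{Q}$, so this hypothesis is always satisfied.

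The genuinely delicate point to verify is that the factorization depends only on $D$ (and on $E$) in a way compatible with the statements of Theorems~\ref{1.1} and~\ref{1.4}, whose conclusions are symmetric in $E$ and $E^D$: the equivalence ``BSD modulo squares holds for $E$ iff it holds for $E^D$'' is exactly what those theorems deliver once the factorization is fixed. Since the corollary asserts precisely this biconditional, no further argument interrelating different choices of $(N_+, N_-)$ is needed — for each fixed $D$ a single factorization suffices, and the direction of vanishing (which of $E$, $E^D$ has rank zero versus one in the odd case) is handled symmetrically by Theorem~\ref{1.1}. I expect the main obstacle to be purely bookkeeping: confirming that the modified Heegner hypothesis and the sign relations are correctly matched to the parity of $\omega(N_-)$, and that the at-most-one rank hypothesis on both curves is exactly what is needed to land in one of the two tractable configurations (one-zero, zero-zero, or one-one) covered by the earlier theorems, with no residual case left uncovered.
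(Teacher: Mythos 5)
Your proposal is correct and follows essentially the same route as the paper's own proof: the same $D$-dependent factorization ($N_+$ the split primes, $N_-$ the inert primes), the same case split on the parity of $\omega(N_-)$ to invoke Theorem~\ref{1.1} or Theorem~\ref{1.4}, and the same observation that $N>1$ guarantees a prime of multiplicative reduction so that the rank-one case of Theorem~\ref{1.4} applies.
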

\begin{proof}
As $E/\mathbb{Q}$ is semistable, its conductor $N$ is squarefree, and $E/\mathbb{Q}$ has multiplicative reduction at all primes dividing $N$. Given the positive fundamental discriminant $D$ that is relatively prime to $N$, define $N_+$ (resp. $N_-$) as the product of prime divisors of $N$ that split (resp. are inert) in $\mathbb{Q}(\sqrt{D})$. Then $N=N_+ N_-$, with $N_+$ and $N_-$ being relatively prime (and of course $N_-$ is squarefree), and $D$ satisfies the modified Heegner hypothesis with respect to $(N_+,N_-)$. Then the statement of Corollary~\ref{1.5} follows immediately from Theorem~\ref{1.1} and Theorem~\ref{1.4}. 

\bigskip
Specifically, assuming that both the analytic ranks of $E/\mathbb{Q}$ and $E^D/\mathbb{Q}$ are at most one. If $N_-$ is a squarefree product of an odd number of distinct prime factors, then it must be the case that $E/\mathbb{Q}$ has analytic rank zero and $E^D/\mathbb{Q}$ has analytic rank one, or the case that $E/\mathbb{Q}$ has analytic rank one and $E^D/\mathbb{Q}$ has analytic rank zero, and so we apply Theorem~\ref{1.1}.  If $N_-$ is a squarefree product of an even number of distinct prime factors, then it must be the case that $E/\mathbb{Q}$ and $E^D/\mathbb{Q}$ both have analytic rank zero, or the case that $E/\mathbb{Q}$ and $E^D/\mathbb{Q}$ both have analytic rank one, and so we apply Theorem~\ref{1.4} (here we are also using the fact that the conductor of any elliptic curve over $\mathbb{Q}$ is greater than one, and hence there is indeed at least one prime of multiplicative reduction for $E/\mathbb{Q}$, so Theorem~\ref{1.4} in the case when both $E/\mathbb{Q}$ and $E^D/\mathbb{Q}$ have analytic rank one is indeed applicable). 
\end{proof}

\bigskip

Finally, we remark that the methods of this paper could also be adapted to treat the case of quadratic twists by negative fundamental discriminants as well.

\bigskip

The organization of this paper is as follows. In Section~\ref{Sec2}, we establish Theorem~\ref{1.2}. As mentioned above, we will establish this in the more general context of narrow genus class characters of the real quadratic fields $\mathbb{Q}(\sqrt{D})$, namely Theorem~\ref{2.1}. In Section~\ref{Sec3}, we then establish Theorem~\ref{1.1}, modulo Theorem~\ref{1.3}. We again establish a more general version of Theorem~\ref{1.1}, namely Theorem~\ref{3.4} (modulo Theorem~\ref{1.3}). Sections~\ref{Sec4} and~\ref{Sec5} are then independent of the previous sections, and $N_-$ is only required to be squarefree (and relatively prime to $N_+$). We establish Theorem~\ref{1.3} in Section~\ref{Sec4}, and thus finish the proof of Theorem~\ref{1.1} and Theorem~\ref{3.4}. Finally, in Section~\ref{Sec5}, we combine our previous results with additional arguments to establish Theorem~\ref{1.4}; in fact, we establish a more general version of it, given as Theorem~\ref{5.1} and Theorem~\ref{5.2}. We conclude the paper with Theorem~\ref{5.3}, as the more general version of Corollary~\ref{1.5}.

\section{Proof of Theorem~\ref{1.2}}\label{Sec2}

In this section, we establish Theorem~\ref{1.2} in the more general context of narrow genus class characters of real quadratic fields. The argument is a direct generalization of that of Section~3 of \cite{Mok1}. Throughout this section, $N_-$ is a squarefree product of an odd number of distinct primes.

\bigskip
Thus let $\chi_1,\chi_2$ be a pair of quadratic (or trivial) primitive Dirichlet characters such that $\chi_1(-1) =\chi_2(-1)$, not both trivial, corresponding to quadratic (or trivial) field extensions of $\mathbb{Q}$ with fundamental discriminants $D_1,D_2$, i.e. $\chi_1,\chi_2$ are the Kronecker symbols for the discriminants $D_1,D_2$, whose conductors are equal to $|D_1|$ and $|D_2|$ respectively; the common sign $\chi_1(-1)=\chi_2(-1)$ will be denoted as $w$. We assume that $D_1$ and $D_2$ are relatively prime and that $D_1 \cdot D_2$ is relatively prime to the conductor of $E/\mathbb{Q}$, namely $N=N_+N_-$. Put $\kappa := \chi_1 \cdot \chi_2$. Thus $\kappa$ corresponds to the real quadratic extension $F:=\mathbb{Q}(\sqrt{D})$ of $\mathbb{Q}$, with $D:= D_1 \cdot D_2$ (thus $D$ is equal to the discriminant $D_F$ of $F$). As before the modified Heegner hypothesis with respect to $(N_+,N_-)$ is always enforced on $D$: all primes dividing $N_+$ split in $F$ (i.e. $\chi_1(l) =\chi_2(l)$ for all primes $l$ dividing $N_+$), and all primes dividing $N_-$ are inert in $F$ (i.e. $\chi_1(q) =-\chi_2(q)$ for all primes $q$ dividing $N_-$). For the proof of Theorem~\ref{3.4} in the next section, both $\chi_1,\chi_2$ are even (and thus $w=+1$), but we work with this more general context in the arguments below.

\bigskip

Below we will deal with quadratic (or trivial) Hecke characters of $F$ (and CM extensions of $F$); thus such a character $\delta$ is a $\{\pm 1 \}$-valued character of the idele class group $\mathbf{A}_F^{\times}/F^{\times}$. For a prime $v$ of $F$ (finite or archimedean) we denote by $\delta_v$ the restriction of $\delta$ to $F_v$, the completion of $F$ at $v$. Then $\delta_v$ is unramified for almost all $v$. We have $\delta = \otimes_v^{\prime} \delta_v$ (as character of $\mathbf{A}_F^{\times}$). We fix local uniformizers $\pi_v \in F_v$ for the finite primes $v$ of $F$. 

\bigskip

In particular the pair of primitive Dirichlet characters $(\chi_1,\chi_2)$ defines a narrow genus class character $\chi_F$ of $F$ (if one of $\chi_1$ or $\chi_2$ is trivial then $\chi_F$ is trivial); identified as a $\{\pm 1\}$-valued Hecke character of $F$, we have that $\chi_{F,v}$ is unramified for all finite primes $v$ (thus the conductor $\mathfrak{c}_{\chi_F}$ of $\chi_F$ is equal to $\mathcal{O}_F$), and that:

\begin{itemize}
\item $\chi_{F,v}(-1) =w$ for archimedean $v$.

\item $\chi_{F,{\mathfrak{q}}}(\pi_{\mathfrak{q}}) = 1$ for $\mathfrak{q}$ any finite prime of $F$ that lies over a prime number $q$ that is inert in $F$.

\item $\chi_{F,{\mathfrak{l}}}(\pi_{\mathfrak{l}}) = \chi_1(l)=\chi_2(l)$ for $\mathfrak{l}$ any finite prime of $F$ that lies over a prime number $l$ that splits in $F$.
\end{itemize}
In particular by the modified Heegner hypothesis, we have that $\chi_{F,{\mathfrak{l}}}(\pi_{\mathfrak{l}})=\chi_1(l) =\chi_2(l)$ for primes $l$ dividing $N_+$ and the two primes $\mathfrak{l}$ of $F$ lying above $l$, and $\chi_{F,{\mathfrak{q}}}(\pi_{\mathfrak{q}})=1$ for primes $q$ dividing $N_-$ and the prime $\mathfrak{q} = q \mathcal{O}_F$ of $F$ lying above $q$.

\bigskip

Denote by $f_E$ the weight $2$ cuspidal newform of level $N$ corresponding to $E/\mathbb{Q}$, and by $\mathbf{f}_E$ the base change of $f_E$ from $\GL_{2/\mathbb{Q}}$ to $\GL_{2/F}$. Thus $\mathbf{f}_E$ is a cuspidal Hilbert newform of parallel weight $2$ corresponding to $E/F$. In particular the $L$-function of $E/\mathbb{Q}$, resp. that of $E/F$, is equal to the $L$-function of $f_E$, resp. that of $\mathbf{f}_E$. The conductor of $E/F$ (and hence the level of $\mathbf{f}_E$) is $N \mathcal{O}_F$, as $D_F$ and $N$ are relatively prime. Also the Hecke eigenvalues of $f_E$ and $\mathbf{f}_E$ are all in $\mathbb{Z}$ as $E/\mathbb{Q}$ (and hence $E/F$) is an elliptic curve.

\bigskip
The $L$-function $L(s,E/F)$ has the following factorization:
\begin{eqnarray*}
L(s,E/F) = L(s,E/\mathbb{Q}) \cdot L(s,E/\mathbb{Q},\kappa) 
\end{eqnarray*}
and more generally that $L$-function $L(s,E/F,\chi_F)$ has the following factorization:
\begin{eqnarray*}
L(s,E/F,\chi_F) = L(s,E/\mathbb{Q},\chi_1) \cdot L(s,E/\mathbb{Q},\chi_2) 
\end{eqnarray*}

The modified Heegner hypothesis implies that the sign $\epsilon(E/\mathbb{Q},\kappa)$ of the functional equation for the $L$-function $L(s,E/\mathbb{Q},\kappa)$ is opposite to $\epsilon(E/\mathbb{Q})$, the sign of the functional equation for the $L$-function $L(s,E/\mathbb{Q})$, and consequently the sign $\epsilon(E/F)$ of the functional equation for the $L$-function $L(s,E/F)$ is equal to $-1$; more generally the sign $\epsilon(E/\mathbb{Q},\chi_1)$ of the functional equation for the $L$-function $L(s,E/\mathbb{Q},\chi_1)$ is opposite to $\epsilon(E/\mathbb{Q},\chi_2)$, the sign of the functional equation for the $L$-function $L(s,E/\mathbb{Q},\chi_2)$, and consequently the sign $\epsilon(E/F,\chi)$ of the functional equation for the $L$-function $L(s,E/F,\chi)$ is also equal to $-1$. In particular we have $L(1,E/F) = L(1,E/F,\chi)=0$.

\bigskip

Now we recall some further notations.

\bigskip
Denote by $\mathrm{height}_{\mathbb{Q}}(-)$ the N\'{e}ron-Tate height of $E/\mathbb{Q}$, which is positive definite quadratic function on $E(\overline{\mathbb{Q}})$ modulo the torsion points of $E(\overline{\mathbb{Q}})$, and is extended naturally to $E(\overline{\mathbb{Q}}) \otimes_{\mathbb{Z}} \mathbb{Q}$. Then for any number field $L$, $\mathrm{height}_L(-)$, the N\'{e}ron-Tate height of $E/L$, is given by $\mathrm{height}_L(-) = [L:\mathbb{Q}] \cdot \mathrm{height}_{\mathbb{Q}}(-)$ (in particular $\mathrm{height}_F(-) = 2 \cdot \mathrm{height}_{\mathbb{Q}}(-)$).

\bigskip

Denote by $\Omega^+_{E/\mathbb{Q}},\Omega^-_{E/\mathbb{Q}}$ the real, and respectively, imaginary N\'{e}ron periods of $E/\mathbb{Q}$ defined with respect to the N\'{e}ron differential $\omega_{\min}$ associated to a global minimal Weierstrass equation $E_{\min}$ for $E/\mathbb{Q}$; we recall the definition (as in \cite{Pal}):
\begin{eqnarray*}
\Omega^+_{E/\mathbb{Q}} &=& \int_{E_{\min}(\mathbb{R})} |\omega_{\min}| \\
\Omega^-_{E/\mathbb{Q}} &=& \int_{\gamma^{-}} \omega_{\min}
\end{eqnarray*}
where $\gamma^{-}$ is a generator of $H_1(E_{\min},\mathbb{Z})^{-}$, the subgroup of elements in $H_1(E_{\min},\mathbb{Z})$ which are negated by complex conjugation.

\bigskip

For primes $q|N_-$ denote by $c_q(E/F)$ the local Tamagawa number of $E/F$ at the prime $q \mathcal{O}_F$. As $E/\mathbb{Q}$ has multiplicative reduction at $q$ and that $q$ is inert in $F$, we have that $E/F$ has split multiplicative reduction at $q \mathcal{O}_F$, so by Tate's non-archimedean uniformization theory we have that $c_q(E/F)$ is equal to the $q\mathcal{O}_F$-valuation of the minimal discriminant of $E/F$, which is also equal to the $q$-valuation of the minimal discriminant of $E/\mathbb{Q}$ (as $q$ is inert and so in particular is unramified in $F$).

\bigskip

Finally, the narrow genus class character $\chi_F$ corresponds under class field theory to the narrow genus class field $H_{\chi_F}$, a quadratic (or trivial) extension of $F$ that is unramified at all the finite primes of $F$. Explicitly we have $H_{\chi_F}=\mathbb{Q}(\sqrt{D_1},\sqrt{D_2})$. We also regard $\chi_F$ as a character of $\Gal(H_{\chi_F}/F)$. Denote by $(E(H_{\chi_F}) \otimes_{\mathbb{Z}} \mathbb{Q})^{\chi_F}$ the $\chi_F$-eigenspace of $E(H_{\chi_F}) \otimes_{\mathbb{Z}} \mathbb{Q}$.

\bigskip

We now show the following:

\begin{theorem}\label{2.1}
With notations as above, we have:
\begin{eqnarray}\label{e2.1}
& & L^{\prime}(1,E/F,\chi_F) \\ &=& \frac{1}{\sqrt{D_F}}  \prod_{q|N_-} c_q(E/F) \cdot \mathrm{height}_F(\mathbf{P}) \cdot w \cdot (\Omega^{w}_{E/\mathbb{Q}})^2 \times \mbox{square of a nonzero rational number} \nonumber
\end{eqnarray}
for some $\mathbf{P} \in (E(H_{\chi_F}) \otimes_{\mathbb{Z}} \mathbb{Q})^{\chi_F} $. In addition if $L^{\prime}(1,E/F,\chi) \neq 0$, then $\mathrm{dim}_{\mathbb{Q}}(E(H_{\chi_F}) \otimes_{\mathbb{Z}} \mathbb{Q})^{\chi_F} =1$, and $\mathbf{P}$ spans $(E(H_{\chi_F}) \otimes_{\mathbb{Z}} \mathbb{Q})^{\chi_F} $. 
\end{theorem}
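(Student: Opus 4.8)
The plan is to follow Section~3 of \cite{Mok1}, replacing the single inert prime treated there by the squarefree product $N_-$ of an odd number of primes and phrasing everything through the genus character $\chi_F=(\chi_1,\chi_2)$. I begin from the factorization $L(s,E/F,\chi_F)=L(s,E/\mathbb{Q},\chi_1)\cdot L(s,E/\mathbb{Q},\chi_2)$ recorded above, together with the fact (also established above) that $\epsilon(E/\mathbb{Q},\chi_1)$ and $\epsilon(E/\mathbb{Q},\chi_2)$ are opposite. Relabelling if necessary, I assume $\epsilon(E/\mathbb{Q},\chi_1)=-1$ and $\epsilon(E/\mathbb{Q},\chi_2)=+1$, so $L(1,E/\mathbb{Q},\chi_1)=0$ and, by the product rule,
\[
L^{\prime}(1,E/F,\chi_F)=L^{\prime}(1,E/\mathbb{Q},\chi_1)\cdot L(1,E/\mathbb{Q},\chi_2)=L^{\prime}(1,E^{D_1}/\mathbb{Q})\cdot L(1,E^{D_2}/\mathbb{Q}).
\]
We may assume this quantity is nonzero, since otherwise both sides of \eqref{e2.1} vanish upon taking $\mathbf{P}=0$. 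The problem thus splits into a derivative factor for the twist $E^{D_1}/\mathbb{Q}$ (of odd analytic rank one) and a central-value factor for $E^{D_2}/\mathbb{Q}$ (of analytic rank zero). Since $H_{\chi_F}=\mathbb{Q}(\sqrt{D_1},\sqrt{D_2})$, the $\chi_F$-eigenspace decomposes as $(E(H_{\chi_F})\otimes_{\mathbb{Z}}\mathbb{Q})^{\chi_F}=(E^{D_1}(\mathbb{Q})\otimes_{\mathbb{Z}}\mathbb{Q})\oplus(E^{D_2}(\mathbb{Q})\otimes_{\mathbb{Z}}\mathbb{Q})$, and the point $\mathbf{P}$ will be produced inside the $E^{D_1}$-summand.

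For the factor $L(1,E^{D_2}/\mathbb{Q})$ I invoke Waldspurger's central-value formula, in the rational form of Kohnen and Zagier: modulo a nonzero rational square it equals $\tfrac{1}{\sqrt{D_2}}\,\Omega^{w}_{E/\mathbb{Q}}$ times explicit local factors at the primes dividing $N$. The hypothesis $L(1,E^{D_2}/\mathbb{Q})\neq 0$ ensures the relevant half-integral-weight Fourier coefficient is nonzero, so this factor is itself nonzero, and the power $w$ of the period is the correct one since $D_2$ has sign $w$.

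For the derivative $L^{\prime}(1,E^{D_1}/\mathbb{Q})$ I pass to an auxiliary imaginary quadratic field $K$ chosen so that every prime dividing the conductor of $E^{D_1}/\mathbb{Q}$ splits in $K$ (the classical Heegner hypothesis) and so that $L(1,E^{D_1}/\mathbb{Q},\chi_K)\neq 0$; such $K$ exists by the nonvanishing theorems of Friedberg--Hoffstein \cite{FH} and Murty--Murty \cite{MM}. Since $\epsilon(E^{D_1}/\mathbb{Q})=-1$, the Heegner hypothesis forces $\epsilon(E^{D_1}/K)=-1$ and hence $\epsilon(E^{D_1}/\mathbb{Q},\chi_K)=+1$, so from $L(s,E^{D_1}/K)=L(s,E^{D_1}/\mathbb{Q})\cdot L(s,E^{D_1}/\mathbb{Q},\chi_K)$ we get $L^{\prime}(1,E^{D_1}/K)=L^{\prime}(1,E^{D_1}/\mathbb{Q})\cdot L(1,E^{D_1}/\mathbb{Q},\chi_K)$. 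The classical Gross--Zagier formula \cite{GrossZagier} then expresses $L^{\prime}(1,E^{D_1}/K)$, modulo a rational square, as the N\'eron--Tate height of a Heegner point times the covolume of the N\'eron period lattice; dividing out the auxiliary factor $L(1,E^{D_1}/\mathbb{Q},\chi_K)$ by Waldspurger once more isolates $L^{\prime}(1,E^{D_1}/\mathbb{Q})$ as a rational-square multiple of $\mathrm{height}_{\mathbb{Q}}(\mathbf{P}_0)\cdot\Omega^{w}_{E/\mathbb{Q}}$, where $\mathbf{P}_0\in E^{D_1}(\mathbb{Q})\otimes_{\mathbb{Z}}\mathbb{Q}$ is the trace of the Heegner point, nontorsion by Gross--Zagier--Kolyvagin \cite{Kolyvagin1} since $L^{\prime}(1,E^{D_1}/\mathbb{Q})\neq 0$.

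It remains to multiply the two expressions and match the right-hand side of \eqref{e2.1}. The product yields $(\Omega^{w}_{E/\mathbb{Q}})^{2}$ and a single $\tfrac{1}{\sqrt{D_2}}$; using the period relations of \cite{Pal} to pass between the periods of $E$, $E^{D_1}$ and $E^{D_2}$ (which cost only $2$-powers and factors of $\sqrt{D_1},\sqrt{D_2}$), the identity $\mathrm{height}_F=2\,\mathrm{height}_{\mathbb{Q}}$, the twisting isomorphism identifying $\mathbf{P}_0$ with the sought $\mathbf{P}$, and $D_F=D_1D_2$, I convert $\tfrac{1}{\sqrt{D_2}}$ into $\tfrac{1}{\sqrt{D_F}}$ modulo squares and absorb the residual $2$-power and the sign into $w\cdot\prod_{q\mid N_-}c_q(E/F)$. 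The heart of the argument --- and the main obstacle --- is exactly this final accounting modulo squares: one must show that the local factors of the two twists at the inert primes $q\mid N_-$, where one twist acquires split and the other nonsplit multiplicative reduction, reproduce $\prod_{q\mid N_-}c_q(E/F)$ up to a square, and that every stray power of $2$ (from $u_{D_i}$, from nonsplit Tamagawa numbers, and from $\mathrm{height}_F=2\,\mathrm{height}_{\mathbb{Q}}$) combines to an even power. Finally, when $L^{\prime}(1,E/F,\chi_F)\neq 0$ we have $L^{\prime}(1,E^{D_1}/\mathbb{Q})\neq 0$ and $L(1,E^{D_2}/\mathbb{Q})\neq 0$, so Gross--Zagier--Kolyvagin gives $\dim_{\mathbb{Q}}E^{D_1}(\mathbb{Q})\otimes_{\mathbb{Z}}\mathbb{Q}=1$ and $\dim_{\mathbb{Q}}E^{D_2}(\mathbb{Q})\otimes_{\mathbb{Z}}\mathbb{Q}=0$, whence $(E(H_{\chi_F})\otimes_{\mathbb{Z}}\mathbb{Q})^{\chi_F}$ is one-dimensional and spanned by $\mathbf{P}$.
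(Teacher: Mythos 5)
Your reduction to $L'(1,E^{D_1}/\mathbb{Q})\cdot L(1,E^{D_2}/\mathbb{Q})$, your decomposition $(E(H_{\chi_F})\otimes_{\mathbb{Z}}\mathbb{Q})^{\chi_F}=E^{D_1}(\mathbb{Q})\otimes_{\mathbb{Z}}\mathbb{Q}\oplus E^{D_2}(\mathbb{Q})\otimes_{\mathbb{Z}}\mathbb{Q}$, and your closing dimension argument via Kolyvagin are all sound. But the proof has a genuine gap, and you have located it yourself: the step you call ``the heart of the argument --- and the main obstacle'' is asserted, not proved. Because you treat the two twists separately over $\mathbb{Q}$, every formula you invoke is expressed in invariants of the twisted curves (the modular degree and Petersson norm of $f_{E^{D_1}}$, Waldspurger constants for two different families of twists, the periods $\Omega^{\pm}_{E^{D_i}}$, the factors $u_{D_i}$, and Tamagawa numbers of $E^{D_i}$ at primes dividing $N$ and $D_i$), and you must then show that their product collapses, modulo nonzero rational squares, to the intrinsic quantity $w\cdot\prod_{q|N_-}c_q(E/F)$. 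That collapse is not bookkeeping: it is essentially the combination of Lemma~\ref{3.2} and Lemma~\ref{3.3} with Theorem~\ref{3.1}, i.e.\ with Theorem~\ref{1.3}, the statement that a certain product of $u_{D_i}$'s and twisted Tamagawa numbers is an even power of $2$. Proving that statement (in particular for even discriminants, where the twists have additive reduction at $2$) occupies all of Section~\ref{Sec4} and relies on the classification of \cite{BRSTTW}. Your proposal therefore makes Theorem~\ref{2.1} rest on the hardest theorem of the paper while leaving it unproved; the paper, by contrast, proves Theorem~\ref{2.1} without any such input and only combines it with Theorem~\ref{1.3} later, in the proof of Theorem~\ref{3.4}.

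There is a second, independent problem: the explicit central-value formulas you invoke are not available in the generality you need. The Kohnen--Zagier form of Waldspurger's theorem requires squarefree (in fact, restricted) level, whereas $E^{D_1}$ has level $ND_1^2$ with additive reduction at every prime dividing $D_1$; at those primes the local constants in an explicit Waldspurger formula are exactly the intractable quantities. Waldspurger's theorem by itself pins the constant down only up to squares \emph{within} a family of twists with fixed local conditions; upgrading it to an identity involving periods and local factors is available only under reduction hypotheses your twisted curves violate. The paper's proof avoids both problems by never leaving $F$: it chooses two auxiliary quadratic Hecke characters $\delta_1$, $\delta$ of $F$ (odd and even at infinity, respectively), applies Zhang's generalized Gross--Zagier formula and central-value formula in the explicit forms of \cite{CST} to the resulting CM extensions $K$, $\widetilde{K}$ of $F$, evaluates the auxiliary value $L(1,E/F,\delta)$ by Theorem 3.2 of \cite{Mok2}, and --- crucially --- uses Lemma 3.4 of \cite{Mok2} to identify $\deg_{E/F}/\langle\Phi_E,\Phi_E\rangle$ with $\prod_{q|N_-}c_q(E/F)$ modulo squares. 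Since $E/F$ has conductor $N\mathcal{O}_F$, no additive primes, no twisted-curve periods, and no $u_D$'s ever appear. To salvage your route you would have to supply both the missing even-power-of-two accounting and explicit Waldspurger constants at additive primes; the more economical fix is to restructure along the paper's lines.
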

\begin{proof}
The theorem is trivial if $L^{\prime}(1,E/F,\chi_F) = 0$. Hence we assume that $L^{\prime}(1,E/F,\chi_F) \neq 0$ in the rest of the proof.

\bigskip

We first choose an auxiliary quadratic Hecke character $\delta_1$ of $F$ satisfying the following conditions:
\begin{itemize}
\item (a) $\delta_1$ is unramified at all primes of $F$ dividing $N$.
\item (b) $\delta_{1,v}(-1) = -w$ for the two archimedean primes $v$ of $F$.
\item (c) $\delta_{1,\mathfrak{q}} (\pi_{\mathfrak{q}})=-1$ for all primes $q$ dividing $N_-$ and the prime $\mathfrak{q} = q \mathcal{O}_F$ of $F$ lying above $q$.
\item (d) $\delta_{1,\mathfrak{l}} (\pi_{\mathfrak{l}})= \chi_1(l) =\chi_2(l)$ for all primes $l$ dividing $N_+$ and the two primes $\mathfrak{l}$ of $F$ lying above $l$. 
\item (e) $L(1,E/F,\delta_1) \neq 0$.
\end{itemize}
The existence of $\delta_1$ satisfying conditions (a)-(e) is guaranteed by Friedberg-Hoffstein \cite{FH}, applied to the Hilbert newform $\mathbf{f}_E$. Indeed for any quadratic Hecke character $\delta_1$ satisfying conditions (a)-(d), we have that the sign $\epsilon(E/F,\delta_1)$ of the functional equation for the $L$-function $L(s,E/F,\delta_1)=L(s,\mathbf{f}_E,\delta_1)$ is equal to
\[
\left(\prod_{v | \infty} \delta_{1,v}(-1) \right) \cdot \left(\prod_{\mathfrak{l} | N_+} \delta_{1,\mathfrak{l}}(\pi_{\mathfrak{l}}) \right)  \cdot  \left(\prod_{\mathfrak{q} | N_-} \delta_{1,\mathfrak{q}}(\pi_{\mathfrak{q}}) \right)  \cdot  \epsilon(E/F)
\]
thus it is equal to $+1$; indeed recall that $\epsilon(E/F)=-1$, and that the primes of $F$ dividing $N_-$ are all of the form $\mathfrak{q} = q \mathcal{O}_F$, with primes $q|N_-$ (and $N_-$ is a squarefree product of an odd number of such $q$'s), so in particular the number of such $\mathfrak{q}$'s is odd (and $N_-\mathcal{O}_F$ is the squarefree product of these $\mathfrak{q}$'s). Hence by \cite{FH}, such a $\delta_1$ satisfying (a)-(e) exists. Fix one such $\delta_1$, and denote by $\mathfrak{c}_{\delta_1}$ the conductor of $\delta_1$.

\bigskip
The quadratic Hecke character $\chi_F \cdot \delta_1$ then corresponds to a CM extension $K$ of $F$, with the property that all primes of $F$ that divide $N_-\mathcal{O}_F$ are inert in $K$, and that all primes of $F$ that divide $N_+\mathcal{O}_F$ split in $K$. In addition the pair $(\chi_F,\delta_1)$ defines a genus class character $\delta_K$ of $K$, which corresponds to the genus class field $H_{\delta_K}$ of $K$, which is a quadratic (or trivial) extension of $K$ that is unramified at all the primes of $K$; explicitly, denoting by $K_1$ the quadratic extension of $F$ that corresponds to $\delta_1$, we have that $H_{\delta_K}$ is the composite of $H_{\chi}$ with $K_1$. The character $\delta_K$ is also regarded as a character of $\Gal(H_{\delta_K}/K)$. We then have the following factorization of $L$-functions:
\begin{eqnarray*}
L(s,E/K,\delta_K) = L(s,E/F,\chi_F) \cdot L(s,E/F,\delta_1)
\end{eqnarray*}
In particular,
\begin{eqnarray}\label{e2.2}
L^{\prime}(1,E/K,\delta_K) = L^{\prime}(1,E/F,\chi_F) \cdot L(1,E/F,\delta_1) \neq 0.
\end{eqnarray}

\bigskip
We now apply the generalized Gross-Zagier formula of Zhang \cite{Zhang1,Zhang2}, in the explicit form given in Theorem 1.5 of \cite{CST}, to the pair $E/F$ and $\delta_K$ and obtain:
\begin{eqnarray}\label{e2.3}
L^{\prime}(1,E/K,\delta_K) = \frac{(8 \pi^2)^2}{(\mathcal{N}_{F/\mathbb{Q}} D_{K/F})^{1/2}} \cdot  \frac{\langle \mathbf{f}_E,\mathbf{f}_E   \rangle}{\deg_{E/F}} \cdot \mathrm{height}_K (\mathbf{P}_{\delta_K}).  
\end{eqnarray}

The explanation of these terms is as follows. Here $D_{K/F}$ is the relative discriminant of $K/F$ and $\mathcal{N}_{F/\mathbb{Q}} D_{K/F}$ is its norm, while $\langle \mathbf{f}_E,\mathbf{f}_E   \rangle$ is the Petersson inner product of $\mathbf{f}_E$ with itself. As for the other terms, we fix a modular parametrization over $F$ of $E/F$ by the Shimura curve $X(N_+\mathcal{O}_F,N_-\mathcal{O}_F)/F$. Here $X(N_+\mathcal{O}_F,N_-\mathcal{O}_F)/F$ is the Shimura curve over $F$ with Eichler level $N_+ \mathcal{O}_F$ associated to the quaternion algebra $\mathcal{B}$ over $F$, with ramification locus given by $N_- \mathcal{O}_F$ and exactly one of the two archimedean places of $F$. Then $\deg_{E/F}$ is the degree of this modular parametrization, and $\mathbf{P}_{\delta_K}$ is the Heegner point associated to the genus class character $\delta_K$ (with respect to this chosen modular parametrization); more precisely $\mathbf{P}_{\delta_K} \in (E(H_{\delta_K})\otimes_{\mathbb{Z}} \mathbb{Q} )^{\delta_K} $. In particular $\mathbf{P}_{\delta_K}$ is non-torsion as $L^{\prime}(1,E/K,\delta_K) \neq 0$.

\bigskip

Recall that $K_1$ is the quadratic extension of $F$ that corresponds to the character $\delta_1$; we also regard $\delta_1$ as a character of $\Gal(K_1/F)$. Now as $L(1,E/F,\delta_1) \neq 0$, so by Theorem A of \cite{Zhang1} (following the methods of Kolyvagin-Logachev \cite{KL}) we have that 
\[
(E(K_1)\otimes_{\mathbb{Z}} \mathbb{Q} )^{\delta_1}=\{0\}.
\]
and so by the same argument as in pp. 1946--1947 of \cite{Mok1}, we then deduce from this that
\[
(E(H_{\delta_K})\otimes_{\mathbb{Z}} \mathbb{Q} )^{\delta_K} = (E(H_{\chi})\otimes_{\mathbb{Z}} \mathbb{Q} )^{\chi_F}.
\]

\bigskip
Hence the point $\mathbf{P}_{\delta_K}$ can be regarded as an element of $(E(H_{\chi_F})\otimes_{\mathbb{Z}} \mathbb{Q} )^{\chi_F}$, and for the Equation \eqref{e2.1} to be proved, we take $\mathbf{P}$ be equal to this $\mathbf{P}_{\delta_K}$. In addition as $L^{\prime}(1,E/K,\delta_K) \neq 0$, so again Theorem A of \cite{Zhang1} asserts that:
\[
\dim_{\mathbb{Q}} (E(H_{\delta_K})\otimes_{\mathbb{Z}} \mathbb{Q} )^{\delta_K} =1
\]
and thus $\mathbf{P}$ spans the one dimensional space $(E(H_{\chi_F})\otimes_{\mathbb{Z}} \mathbb{Q} )^{\chi_F}$. 

\bigskip

We next choose another auxiliary quadratic Hecke character $\delta$ of $F$ satisfying the following conditions:
\begin{itemize}
\item (a) $\delta$ is unramified at all primes dividing $N \mathfrak{c}_{\delta_1}$.
\item (b) $\delta_{v}(-1) = w$ for the two archimedean primes $v$ of $F$.
\item (c) $\delta_{\mathfrak{q}} (\pi_{\mathfrak{q}})=-1$ for primes $q$ dividing $N_-$ and the prime $\mathfrak{q} =q \mathcal{O}_F$ of $F$ lying above $q$. 
\item (d) $\delta_{\mathfrak{l}} (\pi_{\mathfrak{l}})= \chi_1(l) =\chi_2(l)$ for primes $l$ dividing $N_+$ and the two primes $\mathfrak{l}$ of $F$ lying above $l$. 
\item (e) $L(1,E/F,\delta) \neq 0$.
\end{itemize}
The existence of $\delta$ satisfying conditions (a)-(e) is again guaranteed by \cite{FH}. Indeed for any quadratic Hecke character $\delta$ satisfying conditions (a)-(d), we have, by similar arguments as before, that the sign of the functional equation for $L(s,E/F,\delta)=L(s,\mathbf{f}_E,\delta)$ is equal to $+1$; hence by \cite{FH}, such a $\delta$ satisfying (a)-(e) exists. Fix one such $\delta$, and denote by $\mathfrak{c}_{\delta}$ the conductor of $\delta$. In particular $\mathfrak{c}_{\delta}$ and $\mathfrak{c}_{\delta_1}$ are relatively prime.

\bigskip
The quadratic Hecke character $\delta \cdot \delta_1$ then corresponds to a CM extension $\widetilde{K}$ of $F$, with the property that all primes of $F$ that divide $N = N_+ N_-$ are split in $\widetilde{K}$. In addition the pair $(\delta,\delta_1)$ defines in a similar way a genus class character $\delta_{\widetilde{K}}$ of $\widetilde{K}$ (thus denoting by $H_{\delta_{\widetilde{K}}}$ the genus class field of $\widetilde{K}$ corresponding $\delta_{\widetilde{K}}$, we have that $H_{\delta_{\widetilde{K}}}$ is the composite of $K_1$ with the quadratic extension of $F$ corresponding to the character $\delta$). We then have the following factorization of $L$-functions:
\begin{eqnarray*}
L(s,E/\widetilde{K},\delta_{\widetilde{K}}) = L(s,E/F,\delta) \cdot L(s,E/F,\delta_1)
\end{eqnarray*}
so in particular
\begin{eqnarray}\label{e2.4}
L(1,E/\widetilde{K},\delta_{\widetilde{K}}) = L(1,E/F,\delta) \cdot L(1,E/F,\delta_1) \neq 0.
\end{eqnarray}

\bigskip
Now as in pp. 1948 of \cite{Mok1}, we apply Zhang's central value formula \cite{Zhang2} in the explicit form given in Theorem 1.10 of \cite{CST}, to the pair $E/F$ and $\delta_{\widetilde{K}}$ and obtain:
\begin{eqnarray}\label{e2.5}
& & L(1,E/\widetilde{K},\delta_{\widetilde{K}}) \\
&=& \frac{(8 \pi^2)^2}{(\mathcal{N}_{F/\mathbb{Q}} D_{\widetilde{K}/F})^{1/2}} \cdot  \frac{\langle \mathbf{f}_E,\mathbf{f}_E   \rangle}{\langle  \Phi_E, \Phi_E \rangle } \cdot \mbox{square of a non-zero rational number}.  \nonumber
\end{eqnarray}
Here $D_{\widetilde{K}/F}$ is the relative discriminant of $\widetilde{K}/F$ and $\mathcal{N}_{F/\mathbb{Q}} D_{\widetilde{K}/F}$ is its norm; $\langle \mathbf{f}_E,\mathbf{f}_E   \rangle$ is the Petersson inner product of $\mathbf{f}_E$ with itself as before. As for the term $\langle  \Phi_E, \Phi_E \rangle$, we first define $B_F$ to be the totally definite quaternion algebra over $F$ that ramifies exactly at the two archimedean primes of $F$ (and is split at all the finite primes of $F$). We denote by $\Phi_E$ the scalar-valued automorphic eigenform with trivial central character for the group $B_F^{\times}$, with Eichler level $N \mathcal{O}_F$, that corresponds to the cuspidal Hilbert newform $\mathbf{f}_E$ under the Jacquet-Langlands correspondence. We normalize $\Phi_E$ by requiring that the values taken by the automorphic form $\Phi_E$ all lie in $\mathbb{Q}$ (which is possible because the Hecke eigenvalues of $\mathbf{f}_E$ are all in $\mathbb{Z}$). With this normalization $\Phi_E$ is then well-defined up to $\mathbb{Q}^{\times}$-multiples. Then $ \langle \Phi_E, \Phi_E \rangle$ is the Petersson inner product of $\Phi_E$ with itself (which is thus a positive rational number, whose class $\mod{(\mathbb{Q}^{\times})^2}$ is independent of the choice of the $\mathbb{Q}^{\times}$-multiple).

\bigskip

Now on combining \eqref{e2.2}-\eqref{e2.5}, we thus obtain:
\begin{eqnarray}\label{e2.6}
& & L^{\prime}(1,E/F,\chi_F) \\
&=& L^{\prime}(1,E/K,\delta_{K}) \cdot  L(1,E/F,\delta)  \cdot L(1,E/\widetilde{K},\delta_{\widetilde{K}})^{-1} \nonumber \\
&=&   \frac{\langle \Phi_E,\Phi_E \rangle}{\deg_{E/F}}  \cdot \sqrt{\frac{\mathcal{N}_{F/\mathbb{Q}}D_{\widetilde{K}/F}}{\mathcal{N}_{F/\mathbb{Q}}D_{K/F}}  } \cdot L(1,E/F,\delta) \cdot \mathrm{height}_K(\mathbf{P})  \mod{(\mathbb{Q}^{\times})^2} \nonumber 
\end{eqnarray}

\bigskip
Now we have the conductor-discriminant identities ({\it c.f.} pp. 1404 of \cite{Mok2}):
\begin{eqnarray}\label{e2.7}
& & \mathcal{N}_{F/\mathbb{Q}}D_{\widetilde{K}/F} = \mathcal{N}_{F/\mathbb{Q}} \mathfrak{c}_{\delta} \cdot \mathcal{N}_{F/\mathbb{Q}} \mathfrak{c}_{\delta_1}, \\
& & \mathcal{N}_{F/\mathbb{Q}} D_{K/F} = \mathcal{N}_{F/\mathbb{Q}}\mathfrak{c}_{\chi_F} \cdot \mathcal{N}_{F/\mathbb{Q}}\mathfrak{c}_{\delta_1}=\mathcal{N}_{F/\mathbb{Q}}\mathfrak{c}_{\delta_1}. \nonumber
\end{eqnarray}

\bigskip
At this point we apply Theorem 3.2 of \cite{Mok2} to deal with the $L$-value term $L(1,E/F,\delta)$ (with $M$ and $\mathcal{Q}$ of {\it loc. cit.} being taken to be equal to $N_+$ and $N_-$ respectively); here we note that Theorem 3.2 of \cite{Mok2} is indeed applicable, as we have $L^{\prime}(1,E/F,\chi_F) \neq 0$. Thus, according to {\it loc. cit.} we have:
\begin{eqnarray}\label{e2.8}
L(1,E/F,\delta) = \frac{\tau(\delta) \cdot (\Omega_{E/\mathbb{Q}}^w)^2}{2 D_F^{1/2}  \cdot \mathcal{N}_{F/\mathbb{Q}}\mathfrak{c}_{\delta}  }  \mod{(\mathbb{Q}^{\times})^2}
\end{eqnarray}
here $\tau(\delta)$ is the Gauss sum of the quadratic Hecke character $\delta$.

\bigskip

Thus on combining \eqref{e2.6} - \eqref{e2.8}, we obtain:
\begin{eqnarray}\label{e2.9}
& & L^{\prime}(1,E/F,\chi_F) \\& =&  \frac{1}{2 D_F^{1/2}} \cdot\frac{\langle \Phi_E,\Phi_E \rangle}{\deg_{E/F}} \cdot \frac{\tau(\delta)}{(\mathcal{N}_{F/\mathbb{Q}} \mathfrak{c}_{\delta} )^{1/2}}  \cdot \mathrm{height}_K(\mathbf{P}) \cdot (\Omega_{E/\mathbb{Q}}^w)^2   \mod{(\mathbb{Q}^{\times})^2}. \nonumber
\end{eqnarray}

\noindent By using the Gauss sum identity ({\it c.f.} pp. 1404 of \cite{Mok2}):
\[
 \frac{\tau(\delta)}{(\mathcal{N}_{F/\mathbb{Q}} \mathfrak{c}_{\delta} )^{1/2}} = w,
\]
we then obtain:
\begin{eqnarray}\label{e2.10}
   & & L^{\prime}(1,E/F,\chi_F) \\& =&  \frac{1}{2 D_F^{1/2}} \cdot\frac{\langle \Phi_E,\Phi_E \rangle}{\deg_{E/F}} \cdot  \mathrm{height}_K(\mathbf{P}) \cdot w \cdot (\Omega_{E/\mathbb{Q}}^w)^2   \mod{(\mathbb{Q}^{\times})^2}. \nonumber 
\end{eqnarray}

Finally, we deal with the term $\langle \Phi_E,\Phi_E \rangle / \deg_{E/F}$. For this we apply Lemma 3.4 of \cite{Mok2}, specifically Equation \eqref{e3.19} of {\it loc. cit.} (again with the $M$ and $\mathcal{Q}$ there being taken to be equal to $N_+$ and $N_-$ respectively) which tells us that:
\begin{eqnarray}\label{e2.11}
\deg_{E/F} = \left( \prod_{q|N_-} c_q(E/F) \right) \cdot \langle \Phi_E,\Phi_E \rangle \mod{(\mathbb{Q}^{\times})^2}
\end{eqnarray}

\bigskip

Finally we have:
\begin{eqnarray}\label{e2.12}
\mathrm{height}_K(\mathbf{P}) = 2 \cdot \mathrm{height}_F(\mathbf{P})
\end{eqnarray}
and on combining \eqref{e2.10} - \eqref{e2.12}, we see that Theorem~\ref{2.1} is proved.

\end{proof}

\bigskip

Now let $E^{D_1}/\mathbb{Q}$ and $E^{D_2}/\mathbb{Q}$ be the quadratic twists of $E/\mathbb{Q}$ by the fundamental discriminants $D_1$ and $D_2$ respectively. The $L$-function $L(s,E/\mathbb{Q},\chi_i)$ coincides with the $L$-function $L(s,E^{D_i}/\mathbb{Q})$ for $i=1,2$. In particular:
\begin{eqnarray}
L(s,E/F,\chi_F)  = L(s,E^{D_1}/\mathbb{Q}) \cdot L(s,E^{D_2}/\mathbb{Q})
\end{eqnarray}

\bigskip
By Kolyvagin again \cite{K}, we have $E^{D_2}(\mathbb{Q}) \otimes_{\mathbb{Z}} \mathbb{Q} = \{0\}$ (resp. $E^{D_1}(\mathbb{Q}) \otimes_{\mathbb{Z}} \mathbb{Q} = \{0\}$) if $L(1,E^{D_2}/\mathbb{Q}) \neq 0$ (resp. $L(1,E^{D_1}/\mathbb{Q}) \neq 0$), in which case we have $(E(H_{\chi_F}) \otimes_{\mathbb{Z}} \mathbb{Q})^{\chi_F}  =  E^{D_1}(\mathbb{Q}) \otimes_{\mathbb{Z}} \mathbb{Q}$ (resp. $(E(H_{\chi_F}) \otimes_{\mathbb{Z}} \mathbb{Q})^{\chi_F}  =  E^{D_2}(\mathbb{Q}) \otimes_{\mathbb{Z}} \mathbb{Q}$), again by the same argument as in pp. 1946-1947 of \cite{Mok1}, or Theorem 4.7 of \cite{BD}. Hence we can take $\mathbf{P}$ in Theorem~\ref{2.1} to be an element of $E^{D_1}(\mathbb{Q}) \otimes_{\mathbb{Z}} \mathbb{Q}$ (resp. an element of $E^{D_2}(\mathbb{Q}) \otimes_{\mathbb{Z}} \mathbb{Q}$) if $L(1,E^{D_2}/\mathbb{Q}) \neq 0$ (resp. $L(1,E^{D_1}/\mathbb{Q}) \neq 0$). Finally if in addition we have $L^{\prime}(1,E^{D_1}/\mathbb{Q}) \neq 0$ (resp. $L^{\prime}(1,E^{D_2}/\mathbb{Q}) \neq 0$), then we have:
\[
L^{\prime}(1,E/F,\chi_F) = L^{\prime}(1,E^{D_1}/\mathbb{Q}) \cdot L(1,E^{D_2}/\mathbb{Q}) \neq 0
\]
(resp. $L^{\prime}(1,E/F,\chi_F) = L(1,E^{D_1}/\mathbb{Q}) \cdot L^{\prime}(1,E^{D_2}/\mathbb{Q}) \neq 0$), and so $(E(H_{\chi_F}) \otimes_{\mathbb{Z}} \mathbb{Q})^{\chi_F}$ is one dimensional, and consequently $E^{D_1}(\mathbb{Q}) \otimes_{\mathbb{Z}} \mathbb{Q}$ (resp. $E^{D_2}(\mathbb{Q}) \otimes_{\mathbb{Z}} \mathbb{Q}$) is one dimensional.

\bigskip

Finally note that Theorem~\ref{1.2} is then a special case of Theorem~\ref{2.1} and the above discussions, by taking $\chi_1$ to be trivial, and $\chi_2$ to be the Kronecker symbol for the positive fundamental discriminant $D$, in particular $w=+1$ (in this case we have that $\chi_F$ is trivial and $H_{\chi_F}=F$).

\section{Proof of Theorem~\ref{1.1} modulo Theorem~\ref{1.3}}\label{Sec3}

In this section, we prove Theorem~\ref{1.1}; in fact, we establish a more general version of it, namely Theorem~\ref{3.4}. The proof is modulo Theorem~\ref{1.3}, which we will establish in Section~\ref{Sec4} (which is independent of the previous sections). As we have already said in the introduction, Theorem~\ref{1.3} only requires $N_-$ to be squarefree and relatively prime to $N_+$ (in particular $N_-$ can be equal to one); in fact the arguments and results in this section will also be applied in Section~\ref{Sec5} to treat the case where $N_-$ is a squarefree product of an even number of distinct prime factors. Thus, for the moment, we return to the situation where $N_-$ is only required to be squarefree (and relatively prime to $N_+$). 

\bigskip

Thus the conductor $N$ of $E/\mathbb{Q}$ is factored as $N=N_+ N_-$, with $N_+,N_-$ relatively prime and $N_-$ squarefree. We consider quadratic (or trivial) even primitive Dirichlet characters $\chi_1,\chi_2$, not both trivial, whose conductors are noted as $D_1,D_2$; thus $D_1$ is the (positive) fundamental discriminant for $\mathbb{Q}(\sqrt{D_1})$, and similarly for $D_2$. We assume that $D_1,D_2$ are relatively prime. The even quadratic primitive Dirichlet character $\kappa:=\chi_1 \cdot \chi_2$ then has conductor $D:=D_1 \cdot D_2$. Again put $F=\mathbb{Q}(\sqrt{D})$ (and so $D_F=D$), and the pair $(\chi_1,\chi_2)$ gives a genus character $\chi_F$ of $F$ as in the previous section (with $w=+1$). As before we always assume that $D$ satisfies the modified Heegner hypothesis with respect to $(N_+,N_-)$, i.e. $\chi_1(l)=\chi_2(l)$ for all primes $l|N_+$, and $\chi_1(q)=-\chi_2(q)$ for all primes $q|N_-$. In addition, we assume the following condition holds for the pair $(\chi_1,\chi_2)$:

\bigskip
\noindent (*) If a prime $l|N$ is such that either one of $\chi_1(l)$ or $\chi_2(l)$ is equal to $-1$, then $l$ exactly divides $N$, i.e. that $E/\mathbb{Q}$ has multiplicative reduction at $l$.
\bigskip

\noindent Of course, this condition is automatic if $l|N_-$; also, if one of $\chi_1$ or $\chi_2$ is trivial, then this condition follows already from the modified Heegner hypothesis for $D$ with respect to $(N_+,N_-)$. 

\bigskip

Below, for primes $l|N$, we denote by $n_l$ the exact power of $l$ dividing $N$ (i.e., the $l$-valuation of $N$). We now define the following quantities: 

\bigskip

\begin{eqnarray*}
N_{1,+}^I &:=&  \prod_{l|N_+,\chi_1(l)=1} l^{n_l} \\
N_{1,-}^I &:=&  \prod_{l|N_+,\chi_1(l)=-1} l \\
N_{1,+}^{II} &:=&  \prod_{l|N_-,\chi_1(l)=1} l \\
N_{1,-}^{II} &:=&  \prod_{l|N_-,\chi_1(l)=-1} l
\end{eqnarray*}

The integers $N_{1,+}^I,N_{1,-}^I,N_{1,+}^{II},N_{1,-}^{II}$ are then pairwise relatively prime, and by definition $N_{1,-}^I,N_{1,+}^{II},N_{1,-}^{II}$ are squarefree. Define similarly:

\begin{eqnarray*}
N_{2,+}^I &:=&  \prod_{l|N_+,\chi_2(l)=1} l^{n_l} \\
N_{2,-}^I &:=&  \prod_{l|N_+,\chi_2(l)=-1} l \\
N_{2,+}^{II} &:=&  \prod_{l|N_-,\chi_2(l)=1} l \\
N_{2,-}^{II} &:=&  \prod_{l|N_-,\chi_2(l)=-1} l
\end{eqnarray*}

The integers $N_{2,+}^I,N_{2,-}^I,N_{2,+}^{II},N_{2,-}^{II}$ are again pairwise relatively prime, and by definition $N_{2,-}^I,N_{2,+}^{II},N_{2,-}^{II}$ are squarefree.
\bigskip

\noindent By the modified Heegner hypothesis for $D$ with respect to $(N_+,N_-)$, we then have:
\begin{eqnarray*}
N_{1,+}^I = N_{2,+}^I, \,\ N_{1,-}^I = N_{2,-}^I,  \,\ N_{1,+}^{II} = N_{2,-}^{II}, \,\ N_{1,-}^{II} = N_{2,+}^{II} 
\end{eqnarray*}
In particular $N_{1,-}^{II}$ and $N_{2,-}^{II}$ are relatively prime. 

\bigskip
Define $N_{1,+} :=N_{1,+}^I \cdot N_{1,+}^{II}, \,\ N_{2,+} :=N_{2,+}^I \cdot N_{2,+}^{II}, \,\ N_{1,-} :=N_{1,-}^I \cdot N_{1,-}^{II}, \,\ N_{2,-} :=N_{2,-}^I \cdot N_{2,-}^{II}$. Then $N_{1,-}$ and $N_{2,-}$ are squarefree, the pair $(N_{1,+},N_{1,-})$ is relatively prime, and similarly the pair $(N_{2,+},N_{2,-})$ is relatively prime. We have $N_- = N_{1,+}^{II} \cdot N_{1,-}^{II}  = N_{2,+}^{II} \cdot N_{2,-}^{II}  = N_{1,-}^{II} \cdot N_{2,-}^{II} $. In addition, condition (*) also gives:
\[
N_+ = N_{1,+}^{I} \cdot N_{1,-}^{I}  = N_{2,+}^{I} \cdot N_{2,-}^{I}
\]
and so we have
\[
N=N_+ N_- =  N_{1,+} \cdot N_{1,-} = N_{2,+} \cdot N_{2,-}
\]

Finally note that $D_1$ satisfies the modified Heegner hypothesis with respct to $(N_{1,+},N_{1,-})$: $\chi_1(l)=1$ for all primes $l|N_{1,+}$, and $\chi_1(l)=-1$ for all primes $l|N_{1,-}$. Similarly that $D_2$ satisfies the modified Heegner hypothesis with respect to $(N_{2,+},N_{2,-})$: $\chi_2(l)=1$ for all primes $l|N_{2,+}$, and $\chi_2(l)=-1$ for all primes $l|N_{2,-}$.

\bigskip

Below we consider the quadratic twists $E^{D_1}/\mathbb{Q}$ and $E^{D_2}/\mathbb{Q}$ of $E/\mathbb{Q}$ by $D_1$ and $D_2$ respectively. Now, as in the introduction, by Proposition 2.5 and Theorem 3.2 of \cite{Pal} we have (for $i=1,2$):
\begin{eqnarray}\label{e3.1}
\Omega_{E^{D_i}/\mathbb{Q}}^+ = \frac{u_{D_i}}{\sqrt{D_i}}  \Omega_{E/\mathbb{Q}}^+
\end{eqnarray}
with $u_{D_i} \in \{1,2,4,8\}$. We remark that $D_i$ (for $i=1,2$) is a positive fundamental discriminant here (that is relatively prime to $N$), and so when compared to the notations of Proposition 2.5 and Theorem 3.2 of \cite{Pal}, we have that $u_{D_i} = \widetilde{u}=1$ if $D_i$ is odd, i.e. $D_i=d$ with $d \equiv 1 \bmod{4}$, positive squarefree, and $u_{D_i} = 2 \widetilde{u}$ if $D_i$ is even, i.e. $D_i=4 d$ with $d \equiv 2,3 \bmod{4}$, positive squarefree.

\bigskip
Also as in the introduction for primes $q|N_-$, and similarly for $q|N_{1,-}$, or $q|N_{2,-}$, we have that $E/\mathbb{Q}$ has multiplicative reduction at $q$ (split or non-split), and we define $\widetilde{c}_q(E/\mathbb{Q})$ to be equal to one (resp. two) if the $q$-valuation of the minimal discriminant of $E/\mathbb{Q}$ is odd (resp. even).

\bigskip
Recall also that for $l|D_i$ we have $c_l(E^{D_i}/\mathbb{Q}) \in \{1,2,4\}$ for $i=1,2$. We now first use Theorem~\ref{1.3} to show:
\begin{theorem}\label{3.1}
The quantity:
\begin{eqnarray}\label{e3.2}
\frac{u_{D_1}  u_{D_2}}{2^{\omega(N_-)}}  \prod_{l |D_1} c_l(E^{D_1}/\mathbb{Q})  \prod_{l |D_2} c_l(E^{D_2}/\mathbb{Q}) \prod_{q |N_-} \widetilde{c}_q(E/\mathbb{Q})
\end{eqnarray}
is an even power of two.
\end{theorem}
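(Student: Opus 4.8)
The plan is to reduce Theorem~\ref{3.1} to Theorem~\ref{1.3} applied separately to the two twists $E^{D_1}/\mathbb{Q}$ and $E^{D_2}/\mathbb{Q}$, using the factorizations $N = N_{1,+} \cdot N_{1,-} = N_{2,+} \cdot N_{2,-}$ established above together with the verified fact that $D_1$ (resp.\ $D_2$) satisfies the modified Heegner hypothesis with respect to $(N_{1,+},N_{1,-})$ (resp.\ $(N_{2,+},N_{2,-})$). Concretely, I would invoke Theorem~\ref{1.3} twice to conclude that each of the two quantities
\[
\frac{u_{D_1}}{2^{\omega(N_{1,-})}} \prod_{l|D_1} c_l(E^{D_1}/\mathbb{Q}) \prod_{q|N_{1,-}} \widetilde{c}_q(E/\mathbb{Q}),
\qquad
\frac{u_{D_2}}{2^{\omega(N_{2,-})}} \prod_{l|D_2} c_l(E^{D_2}/\mathbb{Q}) \prod_{q|N_{2,-}} \widetilde{c}_q(E/\mathbb{Q})
\]
is an even power of two. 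Multiplying these two expressions, the product is again an even power of two, so it remains to compare this product with the quantity \eqref{e3.2}.

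First I would record the relation between the exponents of $2$ and the Tamagawa factors. Since $N_{i,-} = N_{i,-}^I \cdot N_{i,-}^{II}$ and the relevant primes $q|N_-$ contributing the factors $\widetilde{c}_q(E/\mathbb{Q})$ are exactly those dividing the type-II parts $N_{1,-}^{II}$ and $N_{2,-}^{II}$, I would use the decomposition $N_- = N_{1,-}^{II} \cdot N_{2,-}^{II}$ (with these two factors coprime) to see that the combined product $\prod_{q|N_{1,-}}\widetilde{c}_q(E/\mathbb{Q}) \cdot \prod_{q|N_{2,-}}\widetilde{c}_q(E/\mathbb{Q})$ differs from $\prod_{q|N_-}\widetilde{c}_q(E/\mathbb{Q})$ only by the contributions coming from the type-I primes dividing $N_{1,-}^I = N_{2,-}^I$, i.e.\ from the primes $l|N_+$ with $\chi_i(l)=-1$. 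By condition (*), $E/\mathbb{Q}$ has multiplicative reduction at each such $l$, so $\widetilde{c}_l(E/\mathbb{Q})$ is defined and lies in $\{1,2\}$; the crucial point is that these type-I contributions appear identically in the $D_1$-factor and the $D_2$-factor (because $N_{1,-}^I = N_{2,-}^I$), hence their combined contribution is a perfect square and may be discarded modulo $(\mathbb{Q}^\times)^2$, equivalently contributes an even power of $2$. Similarly, I would compare the power-of-two denominators via $\omega(N_{1,-}) + \omega(N_{2,-}) = \omega(N_-) + 2\,\omega(N_{1,-}^I)$, which again differs from $\omega(N_-)$ by an even amount.

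The main obstacle, and the step requiring genuine care rather than bookkeeping, is the matching of the type-I prime contributions between the two twists: one must verify that for each prime $l|N_+$ with $\chi_1(l)=\chi_2(l)=-1$, the factor $\widetilde{c}_l(E/\mathbb{Q})$ truly enters \emph{both} auxiliary quantities with the same value (so as to form a square), and simultaneously that the extra factor $2^{-\omega(N_{1,-}^I)}$ appearing in each denominator is accounted for correctly on both sides. This is where condition (*) is indispensable: without multiplicative reduction at these $l$, the symbol $\widetilde{c}_l(E/\mathbb{Q})$ would be undefined and the two applications of Theorem~\ref{1.3} could not be combined. Once this symmetric cancellation is established, assembling the pieces shows that \eqref{e3.2} equals the product of the two Theorem~\ref{1.3} quantities times a perfect square, hence is an even power of $2$, completing the proof.
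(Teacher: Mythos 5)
Your proposal is correct and follows essentially the same route as the paper: two applications of Theorem~\ref{1.3} with respect to the factorizations $(N_{1,+},N_{1,-})$ and $(N_{2,+},N_{2,-})$, followed by the observation that the type-I primes (where $N_{1,-}^I = N_{2,-}^I$) contribute the squared factor $\bigl(\prod_{q|N_{1,-}^I}\widetilde{c}_q(E/\mathbb{Q})\bigr)^2$ and the even discrepancy $2\,\omega(N_{1,-}^I)$ in the exponents, so that \eqref{e3.2} agrees with the product of the two auxiliary quantities up to an even power of two. Your emphasis on condition (*) being what makes $\widetilde{c}_l(E/\mathbb{Q})$ defined at those primes is exactly the point implicit in the paper's setup.
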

\begin{proof}
(Modulo the proof of Theorem~\ref{1.3}) We apply Theorem~\ref{1.3} for the discriminant $D_1$ with respect to the pair $(N_{1,+},N_{1,-})$, and similarly the discriminant $D_2$ with respect to the pair $(N_{2,+},N_{2,-})$, thus obtain that the quantities:
\begin{eqnarray}\label{e3.3}
\frac{u_{D_1}}{2^{\omega(N_{1,-})}}  \prod_{l |D_1}  c_l(E^{D_1}/\mathbb{Q}) \prod_{q |N_{1,-}} \widetilde{c}_q(E/\mathbb{Q})
\end{eqnarray}
and 
\begin{eqnarray}\label{e3.4}
\frac{u_{D_2}}{2^{\omega(N_{2,-})}}  \prod_{l |D_2}  c_l(E^{D_2}/\mathbb{Q}) \prod_{q |N_{2,-}} \widetilde{c}_q(E/\mathbb{Q})
\end{eqnarray}
are even powers of two.

\bigskip

Now as $N_{1,-} = N_{1,-}^I \cdot N_{1,-}^{II}$ and $N_{2,-} = N_{2,-}^I \cdot N_{2,-}^{II} =  N_{1,-}^I \cdot N_{2,-}^{II}$, and also that $N_- =  N_{1,-}^{II} \cdot  N_{2,-}^{II}$, we have:

\bigskip

\begin{eqnarray}\label{e3.5}
& & \omega(N_{1,-}) + \omega(N_{2,-}) \\
&=& \omega(N_{1,-}^{II})  + \omega(N_{2,-}^{II}) + 2 \omega(N_{1,-}^I) \nonumber \\
&=&  \omega(N_-) \bmod{2} \nonumber
\end{eqnarray}
and
\bigskip
\begin{eqnarray}\label{e3.6}
& & \prod_{q |N_{1,-}} \widetilde{c}_q(E/\mathbb{Q}) \cdot \prod_{q |N_{2,-}} \widetilde{c}_q(E/\mathbb{Q})  \\
&=& \prod_{q |N^{II}_{1,-}} \widetilde{c}_q(E/\mathbb{Q}) \cdot \prod_{q |N^{II}_{2,-}} \widetilde{c}_q(E/\mathbb{Q})  \cdot  \left( \prod_{q |N^{I}_{1,-}} \widetilde{c}_q(E/\mathbb{Q}) \right)^2  \nonumber \\
&=&\prod_{q |N_{-}} \widetilde{c}_q(E/\mathbb{Q}) \times \mbox{ even power of } 2 \nonumber
\end{eqnarray}

\bigskip

Thus \eqref{e3.2} is, up to an even power of two, the product of \eqref{e3.3} and \eqref{e3.4}. Hence \eqref{e3.2} is also an even power of two.
\end{proof}

\bigskip

As before for primes $q|N_-$, the quantity $c_q(E/F)$ is the local Tamagawa number of $E/F$ at the prime $\mathfrak{q} = q\mathcal{O}_F$ of $F=\mathbb{Q}(\sqrt{D})$ above $q$.

\bigskip
\begin{lemma}\label{3.2}
For primes $q|N_-$ we have:
\begin{eqnarray}\label{e3.7}
\widetilde{c}_q(E/\mathbb{Q})  \cdot c_q(E/F)  = c_q(E^{D_1}/\mathbb{Q}) \cdot c_q(E^{D_2}/\mathbb{Q})
\end{eqnarray}
\end{lemma}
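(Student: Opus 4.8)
The plan is to express both sides of \eqref{e3.7} in terms of a single local invariant, namely $n$, the $q$-valuation of the minimal discriminant of $E/\mathbb{Q}$, and to verify that each side equals $n\cdot\gcd(2,n)$. Since $q \mid N_-$, the curve $E/\mathbb{Q}$ has multiplicative reduction at $q$ (Kodaira type $I_n$), and the component-group computation for type $I_n$ gives $c_q = n$ in the split case and $c_q = \gcd(2,n)$ in the non-split case, the latter being $2$ or $1$ according as $n$ is even or odd. In particular, directly from its definition, $\widetilde{c}_q(E/\mathbb{Q}) = \gcd(2,n)$.

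First I would treat the left-hand side. For $c_q(E/F)$: as $q$ is inert in $F$, the completion $F_{\mathfrak q}/\mathbb{Q}_q$ is the unramified quadratic extension, and base change along it preserves the valuation of the minimal discriminant while turning (possibly non-split) multiplicative reduction into split multiplicative reduction; hence $c_q(E/F) = n$, exactly as already recorded above. Therefore the left-hand side of \eqref{e3.7} is $\widetilde{c}_q(E/\mathbb{Q})\cdot c_q(E/F) = \gcd(2,n)\cdot n$.

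Next I would analyze the two twists locally at $q$. Because $D = D_1 D_2$ is coprime to $N$ we have $q \nmid D_i$, so each twist by $D_i$ is, locally at $q$, a twist by an unramified quadratic character. Such a twist preserves the valuation of the minimal discriminant (so the type remains $I_n$), and it either fixes the reduction type, when $\chi_i(q) = +1$ (then $D_i$ is a local square and $E^{D_i} \cong E$ over $\mathbb{Q}_q$), or interchanges split and non-split multiplicative reduction, when $\chi_i(q) = -1$. Thus $c_q(E^{D_i}/\mathbb{Q})$ equals $n$ or $\gcd(2,n)$ according to the resulting type.

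Finally, the modified Heegner hypothesis forces $\chi_1(q) = -\chi_2(q)$ for $q \mid N_-$, so exactly one of the two twists preserves the reduction type at $q$ and the other reverses it. If $E$ is split at $q$, the type-preserving twist contributes $n$ and the type-reversing one contributes $\gcd(2,n)$; if $E$ is non-split at $q$, the two roles are exchanged, but the product $c_q(E^{D_1}/\mathbb{Q})\cdot c_q(E^{D_2}/\mathbb{Q})$ equals $n\cdot\gcd(2,n)$ in either case. This matches the left-hand side computed above, proving \eqref{e3.7}. I expect no genuine obstacle; the only point demanding care is the invariance of the discriminant valuation and of the split/non-split dichotomy under an unramified quadratic twist, which is standard through the Tate-curve description of multiplicative reduction.
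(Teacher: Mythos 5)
Your proof is correct and follows essentially the same route as the paper's: both rest on the type $I_n$ Tamagawa number dichotomy (split gives $n$, non-split gives $\gcd(2,n)$), the invariance of the minimal discriminant valuation under unramified twists and unramified base change, and the fact that $\chi_1(q)=-\chi_2(q)$ makes exactly one twist flip the split/non-split type. The only difference is organizational — you evaluate both sides symmetrically as $n\cdot\gcd(2,n)$, whereas the paper fixes $\chi_1(q)=1$ without loss of generality and matches factors pairwise ($c_q(E^{D_1}/\mathbb{Q})=c_q(E/F)$ and $c_q(E^{D_2}/\mathbb{Q})=\widetilde{c}_q(E/\mathbb{Q})$ in the split case, with roles exchanged in the non-split case).
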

\begin{proof}
Fix a prime $q|N_-$. We have $\chi_1(q) = -\chi_2(q)$. By symmetry, we may assume without loss of generality in this proof that $\chi_1(q) =1,\chi_2(q)=-1$, i.e. $q$ splits in $F_1:=\mathbb{Q}(\sqrt{D_1})$ and is inert in $F_2:=\mathbb{Q}(\sqrt{D_2})$.

\bigskip
Now $E/\mathbb{Q}$ has multiplicative reduction at $q$, so there are two cases to consider: split multiplicative and non-split multiplicative. Put $\mathfrak{q}=q\mathcal{O}_F$ (the unique prime of $F=\mathbb{Q}(\sqrt{D})$ above $q$); similarly put $\mathfrak{q}_2=q\mathcal{O}_{F_2}$ (the unique prime of $F_2$ above $q$).

\bigskip
We first consider the situation where $E/\mathbb{Q}$ has split multiplicative at $q$. By Tate's non-archimedean uniformization theory, the local Tamagawa number $c_q(E/\mathbb{Q})$ is equal to the $q$-valuation of $\Delta_{\mathrm{min}}(E/\mathbb{Q})$, the minimal discriminant of $E/\mathbb{Q}$. But $q$ is inert, and so in particular is unramified in $F$, so it follows that the $q$-valuation of $\Delta_{\mathrm{min}}(E/\mathbb{Q})$ is equal to the $\mathfrak{q}$-valuation of $\Delta_{\mathrm{min}}(E/F)$, the minimal discriminant of $E/F$. Thus we have $c_q(E/\mathbb{Q}) = c_q(E/F)$. Also $\chi_1(q)=1$ and so $E^{D_1}/\mathbb{Q}_q$ and $E/\mathbb{Q}_q$ are isomorphic over $\mathbb{Q}_q$ (in particular $E^{D_1}/\mathbb{Q}$ has split multiplicative reduction at $q$). Thus $c_q(E^{D_1}/\mathbb{Q}) = c_q(E/\mathbb{Q})$. Consequently $c_q(E^{D_1}/\mathbb{Q})= c_q(E/F)$.

\bigskip
On the other hand, as $E/\mathbb{Q}$ has split multiplicative reduction at $q$ and $\chi_2(q)=-1$, we have that $E^{D_2}/\mathbb{Q}$ has non-split multiplicative reduction at $q$. By Tate's non-archimedean uniformization theory again, the local Tamagawa number $c_q(E^{D_2}/\mathbb{Q})$ is equal to one (resp. two), if the $q$-valuation of $\Delta_{\mathrm{min}}(E^{D_2}/\mathbb{Q})$ is odd (resp. even). But by same token, the $q$-valuation of $\Delta_{\mathrm{min}}(E^{D_2}/\mathbb{Q})$ is equal to the $\mathfrak{q}_2$-valuation of $\Delta_{\mathrm{min}}(E^{D_2}/F_2)$ (the minimal discriminant of $E^{D_2}/F_2$); but this is the same as the $\mathfrak{q}_2$-valuation of $\Delta_{\mathrm{min}}(E/F_2)$, as $E/F_2$ and $E^{D_2}/F_2$ are isomorphic over $F_2$, thus again is equal to the $q$-valuation of $\Delta_{\mathrm{min}}(E/\mathbb{Q})$. Hence the $q$-valuation of $\Delta_{\mathrm{min}}(E^{D_2}/\mathbb{Q})$ is the same as the $q$-valuation of $\Delta_{\mathrm{min}}(E/\mathbb{Q})$. Thus we conclude that $c_q(E^{D_2}/\mathbb{Q}) = \widetilde{c}_q(E/\mathbb{Q})$, and so \eqref{e3.7} is verified in the case where $E/\mathbb{Q}$ has split multiplicative reduction at $q$.

\bigskip

Next, we consider the situation that $E/\mathbb{Q}$ is of non-split multiplicative reduction at $q$. This time we then have that $E^{D_2}/\mathbb{Q}$ is of split multiplicative reduction at $q$, and $E^{D_1}/\mathbb{Q}$ is of non-split multiplicative reduction at $q$. By similar arguments, we have:

\[
c_q(E^{D_1}/\mathbb{Q}) = \widetilde{c}_q(E/\mathbb{Q})
\]
\[
c_q(E^{D_2}/\mathbb{Q}) =c_q(E/F)
\]

\bigskip
\noindent and so \eqref{e3.7} is also verified in the case where $E/\mathbb{Q}$ is of non-split multiplicative reduction case at $q$. This finishes the proof of Lemma~\ref{3.2}.

\end{proof}

\begin{lemma}\label{3.3}
We have
\begin{eqnarray}\label{e3.8}
\prod_{l |N}  c_l(E^{D_1} /\mathbb{Q})  \prod_{l |N}  c_l(E^{D_2} /\mathbb{Q}) =  \prod_{q |N_-} \widetilde{c}_q(E/\mathbb{Q})  \cdot c_q(E/F) \mod{(\mathbb{Q}^{\times})^2}
\end{eqnarray}
\end{lemma}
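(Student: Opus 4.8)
The plan is to regroup the left-hand side prime by prime, writing
\[
\prod_{l|N} c_l(E^{D_1}/\mathbb{Q}) \prod_{l|N} c_l(E^{D_2}/\mathbb{Q}) = \prod_{l|N} \big( c_l(E^{D_1}/\mathbb{Q}) \cdot c_l(E^{D_2}/\mathbb{Q}) \big),
\]
and then to split this product according to whether $l|N_+$ or $l|N_-$. The idea is that the primes dividing $N_+$ contribute only a perfect square, and hence drop out modulo $(\mathbb{Q}^{\times})^2$, while the primes dividing $N_-$ are handled term by term by Lemma~\ref{3.2}. Since each local Tamagawa number $c_l(-/\mathbb{Q})$ depends only on the base change to $\mathbb{Q}_l$, the entire argument is local at each $l$.

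First I would treat a prime $l|N_+$. By the modified Heegner hypothesis we have $\chi_1(l)=\chi_2(l)$. As $D_1,D_2$ are coprime to $N$, both are units at $l$, and the common value $\chi_1(l)=\chi_2(l)$ determines the square class of $D_1$ and $D_2$ in $\mathbb{Q}_l^{\times}$: for odd $l$ a unit is a square exactly when it is a square modulo $l$, so $\chi_1(l)=\chi_2(l)$ forces $D_1/D_2\in(\mathbb{Q}_l^{\times})^2$; for $l=2$ (which can occur only if $2|N$, forcing $D_1,D_2$ to be odd, hence $\equiv 1 \pmod 4$ and so $\equiv 1$ or $5 \pmod 8$) the equality $\chi_1(2)=\chi_2(2)$ forces $D_1\equiv D_2\pmod 8$ and hence $D_1/D_2\in(\mathbb{Q}_2^{\times})^2$. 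Consequently $E^{D_1}/\mathbb{Q}_l \cong E^{D_2}/\mathbb{Q}_l$, so $c_l(E^{D_1}/\mathbb{Q}) = c_l(E^{D_2}/\mathbb{Q})$ and the factor $c_l(E^{D_1}/\mathbb{Q}) \cdot c_l(E^{D_2}/\mathbb{Q})$ is a perfect square. Thus $\prod_{l|N_+}\big(c_l(E^{D_1}/\mathbb{Q})\cdot c_l(E^{D_2}/\mathbb{Q})\big)\in(\mathbb{Q}^{\times})^2$.

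Next I would treat a prime $q|N_-$. Here the modified Heegner hypothesis gives $\chi_1(q)=-\chi_2(q)$, so exactly one of the two twists is locally isomorphic to $E$ over $\mathbb{Q}_q$ and the other is the unramified quadratic twist; this is precisely the situation analyzed in Lemma~\ref{3.2}, which yields the exact identity $c_q(E^{D_1}/\mathbb{Q}) \cdot c_q(E^{D_2}/\mathbb{Q}) = \widetilde{c}_q(E/\mathbb{Q}) \cdot c_q(E/F)$ for every $q|N_-$. Taking the product over all $q|N_-$ and combining with the fact that the $N_+$-part is a square, we obtain \eqref{e3.8}.

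The only genuinely delicate point is the square-class bookkeeping at $l=2$ in the first step; everything else reduces to the local invariance of Tamagawa numbers together with the already-established Lemma~\ref{3.2}. I would take care to record that the dichotomy $2|N$ versus $2\nmid N$ guarantees that whenever $l=2$ actually appears in the product (i.e. when $2|N$) the discriminants $D_1,D_2$ are necessarily odd, so the residue-modulo-$8$ analysis applies without complication.
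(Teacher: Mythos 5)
Your proof is correct and takes essentially the same route as the paper's: both split the product over $l\mid N$ into the $N_+$-part, where $\chi_1(l)=\chi_2(l)$ forces $E^{D_1}/\mathbb{Q}_l\cong E^{D_2}/\mathbb{Q}_l$ and hence a square contribution, and the $N_-$-part, which is handled prime by prime via Lemma~\ref{3.2}. The differences are cosmetic: you spell out the square-class argument (at odd $l$ and at $l=2$) that the paper asserts without detail, while the paper routes the $N_+$-split through its finer decomposition $N_+=N_{1,+}^{I}N_{1,-}^{I}$.
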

\begin{proof}
We have   
\begin{eqnarray}\label{e3.9}
& & \prod_{l |N}  c_l(E^{D_1} /\mathbb{Q})   \\
&=&  \prod_{l |N_{1,+}}  c_l(E^{D_1} /\mathbb{Q})  \prod_{l |N_{1,-}}  c_l(E^{D_1} /\mathbb{Q})  \nonumber \\
& =&  \prod_{l |N^I_{1,+}}  c_l(E^{D_1} /\mathbb{Q})  \prod_{l |N^I_{1,-}}  c_l(E^{D_1} /\mathbb{Q})  \prod_{l |N^{II}_{1,+}}  c_l(E^{D_1} /\mathbb{Q})  \prod_{l |N^{II}_{1,-}}  c_l(E^{D_1} /\mathbb{Q})\nonumber \\
& =&  \prod_{l |N^I_{1,+}}  c_l(E^{D_1} /\mathbb{Q})  \prod_{l |N^I_{1,-}}  c_l(E^{D_1} /\mathbb{Q})  \prod_{q |N_- } c_q(E^{D_1} /\mathbb{Q})  \nonumber 
\end{eqnarray}
and similarly, we have
\begin{eqnarray}\label{e3.10}
& & \prod_{l |N}  c_l(E^{D_2} /\mathbb{Q})   \\
& =&  \prod_{l |N^I_{2,+}}  c_l(E^{D_2} /\mathbb{Q})  \prod_{l |N^I_{2,-}}  c_l(E^{D_2} /\mathbb{Q})  \prod_{q |N_- } c_q(E^{D_2} /\mathbb{Q}) \nonumber  \\
&=&   \prod_{l |N^I_{1,+}}  c_l(E^{D_2} /\mathbb{Q})  \prod_{l |N^I_{1,-}}  c_l(E^{D_2} /\mathbb{Q})  \prod_{q |N_- } c_q(E^{D_2} /\mathbb{Q})   \nonumber 
\end{eqnarray}

\bigskip
But now for primes $l$ dividing $N^I_{1,+} =N^I_{2,+}$ and $N^I_{1,-} =N^I_{2,-}$, we have that $\chi_1(l) = \chi_2(l)$, and so $E^{D_1}/\mathbb{Q}_l$
 and $E^{D_2}/\mathbb{Q}_l$ are isomorphic over $\mathbb{Q}_l$. In particular we have $c_l(E^{D_1}/\mathbb{Q})=c_l(E^{D_2}/\mathbb{Q})$ for such $l$. Hence by taking the product of \eqref{e3.9} and \eqref{e3.10}, we have
 \begin{eqnarray}\label{e3.11}
  \prod_{l |N}  c_l(E^{D_1} /\mathbb{Q})  \prod_{l |N}  c_l(E^{D_2} /\mathbb{Q}) =   \prod_{q|N_-} c_q(E^{D_1}/\mathbb{Q}) \cdot   c_q(E^{D_2}/\mathbb{Q})   \mod{(\mathbb{Q}^{\times})^2}
 \end{eqnarray}
and so \eqref{e3.8} follows by applying \eqref{e3.7} for each prime $q|N_-$.
\end{proof}

\bigskip
\bigskip
We can now state Theorem~\ref{3.4}, which includes Theorem~\ref{1.1} as a special case. The notations are as before, and in particular, the pair $(\chi_1,\chi_2)$ satisfies condition (*). Here, $N_-$ is a squarefree product of an odd number of distinct primes. The elliptic curve $E^{D_1}$ has conductor equal to $N \cdot D_1^2$, and we have the $L$-functions $L(s,E^{D_1}/\mathbb{Q})=L(s,E/\mathbb{Q},\chi_1)$, with sign of the functional equation $\epsilon(E^{D_1}/\mathbb{Q}) =\epsilon(E/\mathbb{Q},\chi_1)$. Similarly the elliptic curve $E^{D_2}$ has conductor equal to $N \cdot D_2^2$, and we have the $L$-functions $L(s,E^{D_2}/\mathbb{Q})=L(s,E/\mathbb{Q},\chi_2)$, with sign of the functional equation $\epsilon(E^{D_2}/\mathbb{Q}) =\epsilon(E/\mathbb{Q},\chi_2)$. Then as we have seen in the previous section, we have $ \epsilon(E/\mathbb{Q},\chi_1) = - \epsilon(E/\mathbb{Q},\chi_2)$. Thus the analytic ranks of $E^{D_1}$ and $E^{D_2}$ are of opposite parity. We are interested in the cases where the analytic rank of $E^{D_1}/\mathbb{Q}$ is equal to zero (resp. one), while the analytic rank of $E^{D_2}/\mathbb{Q}$ is equal to one (resp. zero).

\bigskip

The main theorem of this section is the following, which includes Theorem~\ref{1.1} as a special case (by taking $D_1=1$ and $D_2=D$):

\bigskip

\begin{theorem}\label{3.4}
Assume that $E^{D_1}/\mathbb{Q}$ is of analytic rank zero (resp. one), and that $E^{D_2}/\mathbb{Q}$ is of analytic rank one (resp. zero). Then the Birch and Swinnerton-Dyer formula modulo square of rational numbers is valid for $E^{D_1}/\mathbb{Q}$, if and only if it is valid for $E^{D_2}/\mathbb{Q}$.   
\end{theorem}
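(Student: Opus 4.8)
The plan is to collapse the desired equivalence into a single \emph{unconditional} statement about $2$-adic valuations and then extract that statement from Theorem~\ref{3.1}. By symmetry (interchanging the indices $1$ and $2$) it suffices to treat the case where $E^{D_1}/\mathbb{Q}$ has analytic rank zero and $E^{D_2}/\mathbb{Q}$ has analytic rank one. In this range $\Sha$ is finite by Kolyvagin, hence of square order by the Cassels pairing, and $(\#E^{D_i}(\mathbb{Q})_{\mathrm{tors}})^2$ is visibly a square; therefore the Birch and Swinnerton-Dyer formula modulo squares for $E^{D_1}/\mathbb{Q}$ (respectively $E^{D_2}/\mathbb{Q}$) is equivalent to the assertion that the positive rational number $A_1:=L(1,E^{D_1}/\mathbb{Q})/\bigl(\prod_{l|ND_1^2}c_l(E^{D_1}/\mathbb{Q})\cdot\Omega^+_{E^{D_1}/\mathbb{Q}}\bigr)$ (respectively $A_2:=L^{\prime}(1,E^{D_2}/\mathbb{Q})/\bigl(\mathrm{Reg}(E^{D_2}/\mathbb{Q})\prod_{l|ND_2^2}c_l(E^{D_2}/\mathbb{Q})\cdot\Omega^+_{E^{D_2}/\mathbb{Q}}\bigr)$) lies in $(\mathbb{Q}^{\times})^2$; here $A_1,A_2\in\mathbb{Q}^{\times}_{>0}$ by the known rationality of the ratio of the central $L$-value (resp. derivative) to the real period (resp. to the product of regulator and real period). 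Since $A_1A_2\in(\mathbb{Q}^{\times})^2$ forces $A_1\in(\mathbb{Q}^{\times})^2\Leftrightarrow A_2\in(\mathbb{Q}^{\times})^2$, the theorem reduces to proving $A_1A_2\in(\mathbb{Q}^{\times})^2$ unconditionally.

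Next I would compute the class of $A_1A_2$ in $\mathbb{Q}^{\times}/(\mathbb{Q}^{\times})^2$. For the numerator, the factorization $L(s,E/F,\chi_F)=L(s,E^{D_1}/\mathbb{Q})\cdot L(s,E^{D_2}/\mathbb{Q})$ together with the rank hypotheses gives $L(1,E^{D_1}/\mathbb{Q})\cdot L^{\prime}(1,E^{D_2}/\mathbb{Q})=L^{\prime}(1,E/F,\chi_F)$, which I evaluate via Theorem~\ref{2.1} with $w=+1$, taking $\mathbf{P}\in E^{D_2}(\mathbb{Q})\otimes_{\mathbb{Z}}\mathbb{Q}$ (legitimate since $L(1,E^{D_1}/\mathbb{Q})\neq0$, using the eigenspace identification recorded after Theorem~\ref{2.1}). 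For the denominator I would: apply the period relation \eqref{e3.1} to get $\Omega^+_{E^{D_1}/\mathbb{Q}}\,\Omega^+_{E^{D_2}/\mathbb{Q}}=(u_{D_1}u_{D_2}/\sqrt{D})\,(\Omega^+_{E/\mathbb{Q}})^2$; split each Tamagawa product as $\prod_{l|N}\cdot\prod_{l|D_i}$ and feed the $\prod_{l|N}$ parts into Lemma~\ref{3.3}, which replaces their product by $\prod_{q|N_-}\widetilde{c}_q(E/\mathbb{Q})\,c_q(E/F)$ modulo squares; and use $\mathrm{height}_F(\mathbf{P})=2\,\mathrm{height}_{\mathbb{Q}}(\mathbf{P})\equiv 2\,\mathrm{Reg}(E^{D_2}/\mathbb{Q})\bmod(\mathbb{Q}^{\times})^2$, the congruence holding because the twisting isomorphism $E^{D_2}\cong E$ preserves the absolute N\'eron--Tate height and $\mathbf{P}$ spans the one-dimensional space $E^{D_2}(\mathbb{Q})\otimes\mathbb{Q}$.

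Substituting and cancelling the common factors $1/\sqrt{D}$, $(\Omega^+_{E/\mathbb{Q}})^2$, $\mathrm{Reg}(E^{D_2}/\mathbb{Q})$ and $\prod_{q|N_-}c_q(E/F)$, I expect to land on
\[
A_1A_2\equiv\frac{2}{u_{D_1}u_{D_2}\,\prod_{l|D_1}c_l(E^{D_1}/\mathbb{Q})\,\prod_{l|D_2}c_l(E^{D_2}/\mathbb{Q})\,\prod_{q|N_-}\widetilde{c}_q(E/\mathbb{Q})}\bmod(\mathbb{Q}^{\times})^2.
\]
By Theorem~\ref{3.1} the denominator equals $2^{\omega(N_-)}$ times an even power of $2$; since $N_-$ is a product of an odd number of primes, $\omega(N_-)$ is odd, so the denominator is an odd power of $2$, whence $A_1A_2$ is an even power of $2$, i.e. an element of $(\mathbb{Q}^{\times})^2$. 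This yields $A_1A_2\in(\mathbb{Q}^{\times})^2$ and hence the theorem.

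The point I would scrutinize most carefully is the bookkeeping of powers of $2$. The single factor $2$ coming from the normalization $\mathrm{height}_F=2\,\mathrm{height}_{\mathbb{Q}}$ is exactly what is needed to turn the odd power $2^{\omega(N_-)}$ supplied by Theorem~\ref{3.1} into an even power; thus the odd parity of $\omega(N_-)$ enters in an essential way, in harmony with the rank-zero/rank-one dichotomy that this parity forces through the sign of the functional equation. Secondary points to verify with care are the precise identification $(E(H_{\chi_F})\otimes\mathbb{Q})^{\chi_F}=E^{D_2}(\mathbb{Q})\otimes\mathbb{Q}$ and the resulting height congruence, together with the rationality of $A_1$ and $A_2$ that makes the reduction to a statement in $\mathbb{Q}^{\times}/(\mathbb{Q}^{\times})^2$ meaningful.
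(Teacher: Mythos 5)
Your proposal is correct and follows essentially the same route as the paper's own proof: both reduce, via Kolyvagin and the factorization $L'(1,E/F,\chi_F)=L(1,E^{D_1}/\mathbb{Q})\,L'(1,E^{D_2}/\mathbb{Q})$, to showing that the product of the two BSD ratios is a rational square, and both evaluate that product using Theorem~\ref{2.1} with $\mathbf{P}\in E^{D_2}(\mathbb{Q})\otimes_{\mathbb{Z}}\mathbb{Q}$, the period relation \eqref{e3.1}, Lemma~\ref{3.3}, the height--regulator identification, and finally Theorem~\ref{3.1} together with the odd parity of $\omega(N_-)$ to absorb the residual factor of $2$. The only difference is organizational (you collect everything into the single quantity $A_1A_2$ and invoke Theorem~\ref{3.1} at the end, whereas the paper inserts the square from Theorem~\ref{3.1} mid-computation in \eqref{e3.17}), and your side remark on the separate rationality of $A_1$ and $A_2$ is true but not needed, since $A_1A_2\in(\mathbb{Q}^{\times})^2$ alone suffices for the equivalence.
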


\begin{proof}
For the proof of Theorem~\ref{3.4}, by symmetry we may assume without loss of generality that $E^{D_1}/\mathbb{Q}$ is of analytic rank zero, while $E^{D_2}/\mathbb{Q}$ is of analytic rank one, i.e. that $L(1,E/\mathbb{Q},\chi_1) \neq 0$ and $L^{\prime}(1,E/\mathbb{Q},\chi_2)\neq 0$. By Kolyvagin \cite{K}, the elliptic curves $E^{D_1}/\mathbb{Q}$ and $E^{D_2}/\mathbb{Q}$ satisfy the weak form of the Birch and Swinnerton-Dyer conjecture, and their Shafarevich-Tate groups are both finite, and hence their orders are squares.

\bigskip

The Birch and Swinnerton-Dyer formula modulo square of rational numbers for $E^{D_1}/\mathbb{Q}$ is then the validity of the formula:

\begin{eqnarray}\label{e3.12}
& & L(1,E^{D_1}/\mathbb{Q}) \\
&=& \prod_{l | N D_1} c_l(E^{D_1}/\mathbb{Q}) \cdot \Omega^+_{E^{D_1}/\mathbb{Q}}
\mod{(\mathbb{Q}^{\times})^2} \nonumber \\
&=& \prod_{l | N} c_l(E^{D_1}/\mathbb{Q}) \cdot \prod_{l | D_1} c_l(E^{D_1}/\mathbb{Q}) \cdot \Omega^+_{E^{D_1}/\mathbb{Q}} \mod{(\mathbb{Q}^{\times})^2} \nonumber
\end{eqnarray}

\bigskip
While the Birch and Swinnerton-Dyer formula modulo square of rational numbers for $E^{D_2}/\mathbb{Q}$ is then the validity of the formula:
\begin{eqnarray}\label{e3.13}
& & L^{\prime}(1,E^{D_2}/\mathbb{Q}) \\
&=& \mathrm{Reg}(E^{D_2}/\mathbb{Q}) \cdot  \prod_{l | N D_2} c_l(E^{D_2}/\mathbb{Q}) \cdot \Omega^+_{E^{D_2}/\mathbb{Q}} \mod{(\mathbb{Q}^{\times})^2}  \nonumber \\
&=&  \mathrm{Reg}(E^{D_2}/\mathbb{Q}) \cdot  \prod_{l |  N} c_l(E^{D_2}/\mathbb{Q}) \cdot  \prod_{l |  D_2} c_q(E^D/\mathbb{Q}) \cdot \Omega^+_{E^{D_2}/\mathbb{Q}} \nonumber 
\mod{(\mathbb{Q}^{\times})^2}
\end{eqnarray}

\bigskip
With $\chi_F$ being the genus character of $F$ associated to the pair $(\chi_1,\chi_2)$ as before, we have:
\begin{eqnarray}\label{e3.14}
L(s,E/F,\chi_F) &=&  L(s,E/\mathbb{Q},\chi_1) \cdot L(s,E/\mathbb{Q},\chi_2) \\
&=& L(s,E^{D_1}/\mathbb{Q}) \cdot L(s,E^{D_2}/\mathbb{Q}) \nonumber
\end{eqnarray}
so in particular 
\begin{eqnarray}\label{e3.15}
L^{\prime}(1,E/F,\chi_F) =  L(1,E^{D_1}/\mathbb{Q}) \cdot L^{\prime}(1,E^{D_2}/\mathbb{Q}) \neq 0
\end{eqnarray}

\bigskip
Now, by using Theorem~\ref{2.1} (and the discussion in the paragraph following the proof of Theorem~\ref{2.1}), we then obtain:
\begin{eqnarray}\label{e3.16}
& &  L^{\prime}(1,E/F,\chi_F) \\
&= & \frac{1}{\sqrt{D}}  \prod_{q|N_-} c_q(E/F) \cdot \mathrm{height}_F(\mathbf{P}) \cdot (\Omega^+_{E/\mathbb{Q}})^2 \mod{(\mathbb{Q}^{\times})^2} \nonumber \\
&=& \frac{2}{\sqrt{D_1} \sqrt{D_2}}  \prod_{q|N_-} c_q(E/F) \cdot \mathrm{height}_{\mathbb{Q}}(\mathbf{P}) \cdot (\Omega^+_{E/\mathbb{Q}})^2 \mod{(\mathbb{Q}^{\times})^2} \nonumber
\end{eqnarray}
for some non-torsion $\mathbf{P}$ of the one dimensional $\mathbb{Q}$-vector space $E^{D_2}(\mathbb{Q}) \otimes_{\mathbb{Z}} \mathbb{Q}$.

\bigskip

On applying Theorem~\ref{3.1} (whose proof depends on Theorem~\ref{1.3}) and the definition of the $u_{D_i}$ ($i=1,2$), we then obtain (noting that $\omega(N_-)$ is odd):
\begin{eqnarray}\label{e3.17}
& &  L^{\prime}(1,E/F,\chi_F) \\
&=& \frac{u_{D_1}  u_{D_2}}{2}  \prod_{l |D_1} c_l(E^{D_1}/\mathbb{Q})  \prod_{l |D_2} c_l(E^{D_2}/\mathbb{Q}) \prod_{q |N_-} \widetilde{c}_q(E/\mathbb{Q}) \nonumber \\
& & \cdot \,\ \frac{2}{\sqrt{D_1} \sqrt{D_2}}  \prod_{q|N_-} c_q(E/F) \times \mathrm{height}_{\mathbb{Q}}(\mathbf{P}) \cdot (\Omega^+_{E/\mathbb{Q}})^2 \mod{(\mathbb{Q}^{\times})^2} \nonumber \\
&=& \left( \prod_{l |D_1} c_l(E^{D_1}/\mathbb{Q})  \prod_{l |D_2} c_l(E^{D_2}/\mathbb{Q}) \right) \cdot \left( \prod_{q |N_-} c_q(E/F) \widetilde{c}_q(E/\mathbb{Q})  \right) \nonumber \\
& & \times  \,\  \mathrm{height}_{\mathbb{Q}}(\mathbf{P})  \cdot \Omega^+_{E^{D_1}/\mathbb{Q}} \cdot \Omega^+_{E^{D_2}/\mathbb{Q}} \mod{(\mathbb{Q}^{\times})^2}  \nonumber 
\end{eqnarray}
and combining Lemma~\ref{3.3} with \eqref{e3.17}, we then obtain
\begin{eqnarray}\label{e3.18}
& &  L^{\prime}(1,E/F,\chi_F) \\
&=&  \left( \prod_{l |D_1} c_l(E^{D_1}/\mathbb{Q})  \prod_{l |D_2} c_l(E^{D_2}/\mathbb{Q}) \right) \cdot \left( \prod_{l |N} c_l(E^{D_1}/\mathbb{Q})  \prod_{l |N} c_l(E^{D_2}/\mathbb{Q})  \right) \nonumber  \nonumber \\
& & \times \,\  \mathrm{height}_{\mathbb{Q}}(\mathbf{P})  \cdot \Omega^+_{E^{D_1}/\mathbb{Q}} \cdot \Omega^+_{E^{D_2}/\mathbb{Q}} \mod{(\mathbb{Q}^{\times})^2}  \nonumber  
\end{eqnarray}

\bigskip

Finally if $\mathbb{P}^D$ is a generator of $E^{D_2}(\mathbb{Q})$ modulo torsion, then in $E^{D_2}(\mathbb{Q}) \otimes_{\mathbb{Z}} \mathbb{Q}$ we have:
\begin{eqnarray}\label{e3.19}
\mathbf{P} = r \cdot \mathbb{P}^D    
\end{eqnarray}
for some non-zero rational number $r$; hence we have:
\begin{eqnarray}\label{e3.20}
 \mathrm{Reg}(E^D/\mathbb{Q}) &=& \mathrm{height}_{\mathbb{Q}}(\mathbb{P}^D) \\
& = &  \mathrm{height}_{\mathbb{Q}}(\mathbf{P}) \mod{(\mathbb{Q}^{\times})^2} \nonumber
\end{eqnarray}

\bigskip

Thus on combining \eqref{e3.15}, \eqref{e3.18}, and \eqref{e3.20}, we then see that \eqref{e3.12} is valid if and only if \eqref{e3.13} is valid. This completes the proof of Theorem~\ref{3.4}, modulo the proof of Theorem~\ref{1.3} (to be completed in Section~\ref{Sec4}). 
\end{proof}

\section{Proof of Theorem~\ref{1.3}}\label{Sec4}

To complete the proof of Theorem~\ref{1.1} and Theorem~\ref{3.4}, it remains to establish Theorem~\ref{1.3}. We thus consider an elliptic curve $E/\mathbb{Q}$ whose conductor $N$ is written in the form $N=N_+ N_-$, with $N_+,N_-$ being relatively prime, and $N_-$ being squarefree (we consider both the cases where $N_-$ is a squarefree product of an odd or even number of distinct primes; in particular $N_-$ is allowed to be equal to one). Again $D$ is a positive fundamental discriminant that satisfies the modified Heegner hypothesis with respect to $(N_+,N_-)$: all primes dividing $N_+$ split in $\mathbb{Q}(\sqrt{D})$, and all primes dividing $N_-$ are inert in $\mathbb{Q}(\sqrt{D})$. The rest of the notations are the same as before, but no conditions on the analytic ranks are imposed on $E/\mathbb{Q}$ or $E^D/\mathbb{Q}$. We need to show that the quantity \eqref{e1.2} is an even power of two. There are serious technical difficulties in the case when $D$ is even, because there is no simple description of local Tamagawa numbers at the prime two in the case of additive reduction, and our proof crucially relies on the recent classification \cite{BRSTTW} of how local Tamagawa numbers change under quadratic twists.  

\bigskip

Throughout this section, we employ the standard notation for affine Weierstrass equations of elliptic curves. To compute the local Tamagawa number of an elliptic curve $E$ at a prime $p$, one needs to consider an integral model for $E$ that is minimal at the prime $p$. That is, the model's discriminant has $p$-adic valuation equal to the $p$-adic valuation of the minimal discriminant of $E$. To ease notation, we will identify an elliptic curve $E$ with a given Weierstrass model. Thus, we recall that if $E$ and $E'$ are elliptic curves defined over a field~$K$, then a $K$-isomorphism $\phi:E\to E'$ has the form $\phi(x,y)=(u^2 x + r,u^3x+u^2sx+w)$ for some $u,r,s,w \in K$ with $u\not =0$. In what follows, we identify $\phi$ with $[u,r,s,w]$. 

\bigskip

We now consider the following lemma, which will provide us with a model to identify the quadratic twist $E^d$ by a general non-zero integer $d$ (here $d$ is not required to be a fundamental discriminant).    

\begin{lemma}\label{4.1}
\label{Lem:QuadTwist}Suppose an elliptic curve $E$ is given by an affine Weierstrass model
\[
y^{2}+a_{1}xy+a_{3}y=x^{3}+a_{2}x^{2}+a_{4}x+a_{6}.
\]
Then, for a non-zero integer $d$, the quadratic twist of $E$ by $d$ has an affine Weierstrass model
\begin{align*}
E^{d}  & :y^{2}+a_{1}xy+a_{3}y\\
& =x^{3}+\left(  a_{2}d+a_{1}^{2}\frac{d-1}{4}\right)  x^{2}+\left(
a_{4}d^{2}+a_{1}a_{3}\frac{d^{2}-1}{2}\right)  x+ \left(a_{6}d^{3}+a_{3}^{2} %
\frac{d^{3}-1}{4} \right).
\end{align*}
Moreover, if $d=s^{2}f$ for some integers $s$ and $f$, then $\left[  s,0,\frac{a_{1}\left(  s-1\right)  }{2},\frac{a_{3}(s^{3}-1)}{2}\right]  $ is a $\mathbb{Q}$-isomorphism of elliptic curves (in terms of affine Weierstrass equations) from $E^{d}$ to $E^{f}$.
\end{lemma}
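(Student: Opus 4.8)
The plan is to prove both assertions by writing down the relevant isomorphisms explicitly and checking that they effect the stated change of Weierstrass coefficients, using the standard transformation law: if two models with coefficients $a_i$ and $a_i'$ are related by a map $[u,r,s,w]$, then
\[
u a_1'=a_1+2s,\quad u^2a_2'=a_2-sa_1+3r-s^2,\quad u^3a_3'=a_3+ra_1+2w,
\]
together with the degree-four and degree-six relations $u^4a_4'=a_4-sa_3+2ra_2-(w+rs)a_1+3r^2-2sw$ and $u^6a_6'=a_6+ra_4+r^2a_2+r^3-wa_3-w^2-rwa_1$ (as in Silverman, \emph{The Arithmetic of Elliptic Curves}, III.1). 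For the first assertion I would work over $\mathbb{Q}(\sqrt d)$ and regard $\phi=[\sqrt d,\,0,\,\tfrac{a_1(\sqrt d-1)}{2},\,\tfrac{a_3(d\sqrt d-1)}{2}]$ as an isomorphism from the curve $E^d$ displayed in the lemma (the source) to $E$ (the target). Here $u=\sqrt d$ and $r=0$, and the first and third relations are calibrated precisely so that the $a_1$- and $a_3$-coefficients of the source stay equal to those of $E$: indeed $\sqrt d\,a_1=a_1+2\cdot\tfrac{a_1(\sqrt d-1)}{2}$ and $d\sqrt d\,a_3=a_3+2\cdot\tfrac{a_3(d\sqrt d-1)}{2}$, which is why $E^d$ inherits the coefficients $a_1,a_3$ of $E$.

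Substituting these values into the remaining three relations then forces the $x^2$-, $x$-, and constant coefficients of the source to be exactly $a_2d+a_1^2\tfrac{d-1}{4}$, $a_4d^2+a_1a_3\tfrac{d^2-1}{2}$, and $a_6d^3+a_3^2\tfrac{d^3-1}{4}$. The one nontrivial point is that the cross terms telescope: writing $t=\tfrac{a_1(\sqrt d-1)}{2}$ and $w=\tfrac{a_3(d\sqrt d-1)}{2}$, one gets $ta_1+t^2=\tfrac{a_1^2}{4}(\sqrt d-1)(\sqrt d+1)=a_1^2\tfrac{d-1}{4}$ for the $x^2$-coefficient, and likewise $ta_3+wa_1+2tw=a_1a_3\tfrac{d^2-1}{2}$ and $wa_3+w^2=a_3^2\tfrac{d^3-1}{4}$. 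This shows the model in the lemma is $\mathbb{Q}(\sqrt d)$-isomorphic to $E$ via $\phi$; to confirm that it is the quadratic twist \emph{by $d$} rather than some other curve, I would note that for the nontrivial $\sigma\in\mathrm{Gal}(\mathbb{Q}(\sqrt d)/\mathbb{Q})$ one has $\sigma(\phi)\circ\phi^{-1}=[-1]$, so the twisting cocycle is the quadratic character cutting out $\mathbb{Q}(\sqrt d)$.

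For the second assertion I would use the two models just obtained: $E^d$ and $E^f$ are both of the lemma's form (with parameter $d$, respectively $f$), and I would apply the transformation law to the integral map $[s,\,0,\,\tfrac{a_1(s-1)}{2},\,\tfrac{a_3(s^3-1)}{2}]$, now with $u$ equal to the integer $s$ and $r=0$, read as a map from $E^d$ to $E^f$. The $a_1$- and $a_3$-relations again hold by the same calibration. After clearing the common factors $\tfrac{a_1^2}{4}$, $\tfrac{a_1a_3}{2}$, and $\tfrac{a_3^2}{4}$, the remaining three relations reduce to the polynomial identities
\[
(d-1)-2(s-1)-(s-1)^2=d-s^2,\qquad (d^3-1)-2(s^3-1)-(s^3-1)^2=d^3-s^6,
\]
and $(d^2-1)-(s-1)-(s^3-1)-(s-1)(s^3-1)=d^2-s^4$, which, combined with $d=s^2f$ (so that $s^2f=d$, $s^4f^2=d^2$, $s^6f^3=d^3$), convert the $E^d$-coefficients into the $E^f$-coefficients. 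Conceptually the same map arises as the composite $E^d\to E\to E^f$ of the two $\mathbb{Q}(\sqrt{\,\cdot\,})$-isomorphisms from the first part, whose ratio of $u$-parameters is $\sqrt d/\sqrt f=s\in\mathbb{Q}$, which explains a priori why it is defined over $\mathbb{Q}$.

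The routine part is writing down the maps and reading off the $a_1$- and $a_3$-relations; the main obstacle is the algebra in the degree-four and degree-six relations, where one must expand the products of $r,s,w$ and recognize that they collapse to the stated closed forms. The cleanest way to manage this is to isolate the three telescoping identities above and verify them once as identities in $d$ and $s$, after which both parts follow by direct substitution. Keeping track of the denominators $2$ and $4$ — and hence confirming that the resulting coefficients are exactly the claimed rational expressions — is the only place where genuine care is required.
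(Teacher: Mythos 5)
Your proposal is correct, but it proves the lemma by a genuinely different route than the paper does. The paper's proof is essentially a citation plus a computation: the displayed model for $E^{d}$ is quoted from Proposition 4.3.2 of Connell's notes, and the map $\left[ s,0,\tfrac{a_{1}(s-1)}{2},\tfrac{a_{3}(s^{3}-1)}{2}\right]$ is checked to be a $\mathbb{Q}$-isomorphism by software (Sage). You instead give a self-contained hand verification: your three telescoping identities
\[
ta_{1}+t^{2}=a_{1}^{2}\tfrac{d-1}{4},\qquad ta_{3}+wa_{1}+2tw=a_{1}a_{3}\tfrac{d^{2}-1}{2},\qquad wa_{3}+w^{2}=a_{3}^{2}\tfrac{d^{3}-1}{4}
\]
are exactly the correction terms in the degree-$2$, $4$, $6$ transformation relations (I checked them; they are right, as are the three polynomial identities in $d,s$ you use for the second assertion), and your Galois-cocycle step $\sigma(\phi)\circ\phi^{-1}=[-1]$ supplies the one piece of content that the paper outsources to Connell, namely that the displayed curve is the twist \emph{by $d$} and not merely some curve isomorphic to $E$ over $\mathbb{Q}(\sqrt{d})$. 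What your approach buys is independence from both the external reference and the computer check, at the cost of length; it also makes visible why the same map works in both parts (your part-one map is the part-two map specialized at $s=\sqrt{d}$, $f=1$). One point deserves care in a write-up: the direction conventions for $[u,r,s,w]$. As you literally state the relations ($ua_{1}'=a_{1}+2s$ with the primed coefficients belonging to the source $E^{d}$), the degree-$2$, $4$, $6$ relations would produce $A_{2}=\bigl(a_{2}-a_{1}^{2}\tfrac{d-1}{4}\bigr)/d$ rather than the displayed coefficient; the identities you actually verify (with plus signs) correspond to the opposite pairing, $u^{2}a_{2}^{\mathrm{target}}=a_{2}^{\mathrm{source}}-sa_{1}^{\mathrm{source}}+3r-s^{2}$, which is the convention under which the lemma's statement (and Sage's \texttt{urst} formalism, which the paper implicitly uses) is the correct one. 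This is a labeling ambiguity rather than a mathematical gap --- your computed identities are the correct and complete ones --- but you should fix the convention explicitly once at the start so that source and target are not interchangeable by accident; note that the paper's own stated convention contains a typo and is itself inconsistent with the Sage convention it relies on.
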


\begin{proof}
That $E^{d}$ is an affine Weierstrass model for the quadratic twist of $E$ by $d$ is given in Proposition 4.3.2 of \cite{connell}. It is easily verified that $\left[  s,0,\frac{a_{1}\left(  s-1\right)  }{2},\frac{a_{3}(s^{3}-1)}{2}\right]  $ is a $\mathbb{Q}$-isomorphism of elliptic curves from $E^{d}$ to $E^{f}$ by using software such as Sage \cite{sagemath}.
\end{proof}

\bigskip

The following lemma determines the value of $u_D$ that appears in \eqref{e1.2}. This quantity is the positive integer $u$ appearing in a $\mathbb{Q}$-isomorphism between $E^D$ and any global minimal model for $E^D$. 

\bigskip

\begin{lemma}\label{lem:uvalue}
Suppose $E/\mathbb{Q}$ is an elliptic curve given by a global minimal model, and let $c_6$ denotes its associated invariant. Let $D$ be a fundamental discriminant that is coprime to the conductor of $E$ and let $E^{D}$ be the quadratic twist of $E$ by $D$. If $\left[u_{D},s_{D},r_{D},w_{D}\right]  $ is a $\mathbb{Q}$-isomorphism from $E^{D}$ onto a global minimal model for $E^{D}$, then
\begin{equation}\label{eq:uval}
u_{D}=\left\{
\begin{array}
[c]{cl}
1 & \text{if }v_{2}(D)\leq2\text{ or }v_{2}(D)=3\text{ with }v_{2}(c_{6})=0,\\
2 & \text{if }v_{2}(D)=3\text{ with }v_{2}(c_{6})=3.
\end{array}
\right.
\end{equation}
\end{lemma}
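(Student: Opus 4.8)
The plan is to track the standard invariants $c_4,c_6,\Delta$ through the three models involved: the given global minimal model of $E$, the twist model $E^{D}$ furnished by Lemma~\ref{4.1}, and a global minimal model $E^{D}_{\mathrm{min}}$ of $E^{D}$. From the explicit equation in Lemma~\ref{4.1} one reads off $c_4(E^{D})=D^{2}c_4$, $c_6(E^{D})=D^{3}c_6$ and $\Delta(E^{D})=D^{6}\Delta_{\mathrm{min}}(E)$, while the isomorphism $[u_D,s_D,r_D,w_D]$ gives $c_4(E^{D})=u_D^{4}c_4(E^{D}_{\mathrm{min}})$, $c_6(E^{D})=u_D^{6}c_6(E^{D}_{\mathrm{min}})$ and $\Delta(E^{D})=u_D^{12}\Delta_{\mathrm{min}}(E^{D})$. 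Since $E$ and $E^{D}_{\mathrm{min}}$ are integral and minimal, $u_D$ is a positive integer and, for every prime $p$, $v_p(u_D)=\tfrac{1}{12}\bigl(v_p(\Delta(E^{D}))-v_p(\Delta_{\mathrm{min}}(E^{D}))\bigr)$. In particular $v_p(\Delta_{\mathrm{min}}(E^{D}))\equiv 6\,v_p(D)+v_p(\Delta_{\mathrm{min}}(E))\pmod{12}$, and this congruence, together with the reduction type at $p$, is what will pin down $v_p(u_D)$.

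First I would dispose of the odd primes and the easy $2$-adic cases. For odd $p\mid D$ one has $v_p(\Delta(E^{D}))=6<12$, so no reduction is possible and $v_p(u_D)=0$; for odd $p\nmid D$ the twist by the $p$-adic unit $D$ preserves minimality, so again $v_p(u_D)=0$. Hence $u_D$ is a power of $2$ and the problem is purely $2$-adic. If $D$ is odd then $D$ is a unit at $2$ and $\Q(\sqrt{D})/\Q$ is trivial or unramified at $2$, so the twist preserves the reduction type and the minimal discriminant at $2$, giving $v_2(u_D)=0$. If $v_2(D)=2$ then, as $D$ is coprime to $N$, $E$ has good reduction at $2$ and $v_2(\Delta(E^{D}))=12$; the twisting field $\Q_2(\sqrt{D})$ is ramified, so $E^{D}$ has additive reduction at $2$ and $v_2(\Delta_{\mathrm{min}}(E^{D}))\geq 1$. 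Since $v_2(\Delta_{\mathrm{min}}(E^{D}))\equiv 12\equiv 0\pmod{12}$, this forces $v_2(\Delta_{\mathrm{min}}(E^{D}))=12$, whence $u_D=1$. This covers all cases with $v_2(D)\leq 2$.

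The remaining case $v_2(D)=3$ is the delicate one: here $v_2(\Delta(E^{D}))=18$, so the congruence only narrows $v_2(\Delta_{\mathrm{min}}(E^{D}))$ to $\{6,18\}$, i.e. $v_2(u_D)\in\{0,1\}$, and the dichotomy in the statement is exactly what decides between them. The preliminary fact I would establish is that, because $E$ has good reduction at $2$ (again $D$ is even and coprime to $N$), a global minimal model satisfies either $v_2(c_4)=v_2(c_6)=0$ (when $a_1$ is odd) or $v_2(c_4)\geq 4$ with $v_2(c_6)=3$ (when $a_1$ is even and $a_3$ odd); in particular $v_2(c_6)\in\{0,3\}$, so the two sub-cases are exhaustive. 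Deciding whether $E^{D}$ can be divided by $2$ then amounts to asking whether the rescaled pair $(D^{2}c_4/2^{4},\,D^{3}c_6/2^{6})$ is realized by an integral Weierstrass model at $2$, which I would test with the Kraus / Laska--Kraus--Connell criterion \cite{connell}. When $v_2(c_6)=0$ the rescaled $c_4$ has $2$-adic valuation $2$; but writing $c_4=b_2^{2}-24b_4$ with $b_2=a_1^{2}+4a_2\equiv 0,1\pmod 4$ shows that the invariant $c_4$ of an integral model never has valuation $1$ or $2$, so no such model exists and $v_2(u_D)=0$, i.e. $u_D=1$. When $v_2(c_6)=3$ the rescaled invariants have $v_2(c_4)\geq 6$ and $v_2(c_6)=6$, the Kraus condition is satisfied, and so division by $2$ is achievable (a further division being excluded, as it would force a negative discriminant valuation); hence $v_2(u_D)=1$ and $u_D=2$.

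The main obstacle is precisely this last realizability step in the sub-case $v_2(c_6)=3$. The opposite sub-case is clean and automatic, resting only on the elementary fact that $v_2(c_4)\notin\{1,2\}$; but confirming that the rescaled pair genuinely comes from an integral model requires the finer $2$-adic congruence conditions rather than a mere valuation count, and it is here that the explicit twist model of Lemma~\ref{4.1}, combined with the classification of \cite{BRSTTW} describing the $2$-adic behaviour of quadratic twists, does the real work.
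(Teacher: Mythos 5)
Your approach (direct valuation bookkeeping on $(c_4,c_6,\Delta)$ plus the Kraus criterion) is genuinely different from the paper's, which instead factors the isomorphism as $E^{D}\to E^{m}$ (where $D=4m$, with scaling factor $2$ coming from Lemma~\ref{4.1}) and then quotes Proposition~2.5 of \cite{Pal} for the minimalization factor of $E^{m}$. But as written your proof has a genuine gap at its foundation: the assertion that $u_D$ is a positive integer, justified by ``since $E$ and $E^{D}_{\mathrm{min}}$ are integral and minimal.'' That justification does not apply: $u_D$ relates the Lemma~\ref{4.1} model of $E^{D}$ to $E^{D}_{\mathrm{min}}$ (the curve $E$ itself plays no role in this comparison), and for even $D$ the Lemma~\ref{4.1} model is \emph{not} $2$-integral. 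Indeed its coefficients involve $a_1^2\frac{D-1}{4}$, $a_1a_3\frac{D^2-1}{2}$ and $a_3^2\frac{D^3-1}{4}$, where $D-1$, $D^2-1$, $D^3-1$ are odd, while good reduction at $2$ forces $a_1$ or $a_3$ to be odd; so at least one coefficient is a non-integer. Consequently the standard fact ``an integral model maps to a minimal model with integral scaling factor'' gives you nothing at $p=2$, and a non-integral model can in principle have \emph{smaller} discriminant valuation than the minimal one: nothing in your argument rules out $v_2(u_D)<0$.

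This missing inequality is load-bearing in two of your three cases. For $v_2(D)=2$ you say the congruence $v_2(\Delta_{\mathrm{min}}(E^{D}))\equiv 0 \pmod{12}$ together with additive reduction ``forces'' $v_2(\Delta_{\mathrm{min}}(E^{D}))=12$; without an a priori upper bound this does not exclude $24,36,\dots$, and at $p=2$ you cannot invoke the tame Kodaira-type discriminant values to get such a bound (the types occurring here are $\mathrm{I}_4^{\ast}$, $\mathrm{II}^{\ast}$, $\mathrm{I}_8^{\ast}$ with wild, non-tame discriminant valuations, as in Section~\ref{Sec4}). Likewise, for $v_2(D)=3$ with $v_2(c_6)=0$, your (correct) observation that no integral model has $v_2(c_4)\in\{1,2\}$ only rules out $v_2(u_D)=1$; you still need $v_2(u_D)\geq 0$ to conclude $u_D=1$. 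Only the subcase $v_2(D)=3$, $v_2(c_6)=3$ is complete as written, since there the Kraus-constructed integral model with discriminant valuation $6$ bounds $\Delta_{\mathrm{min}}$ from above directly. The gap is fixable with the very tool you already use: if $v_2(u_D)\leq -1$, the minimal model would have $v_2(c_4)\geq 8$, $v_2(c_6)\geq 12$ and $v_2(\Delta)\geq 24$, so rescaling it by $[2,0,0,0]$ would again satisfy the Kraus conditions at $2$ (namely $v_2(c_4)\geq 4$ and $c_6\equiv 0 \pmod{32}$), contradicting minimality by Laska--Kraus--Connell. Supply this step (or quote \cite[Proposition 2.5]{Pal} as the paper does) and your argument goes through.
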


\begin{proof}
Suppose $D$ is odd so that $D\equiv1\ \operatorname{mod}4$ is squarefree. By~\cite[Corollary 2.6]{Pal}, $u_{D}=1$. Now suppose that $D$ is even and write $D=4m$ for some squarefree integer $m$. Since $D$ is coprime to the conductor of $E$, it follows that $E$ has good reduction at $2$. As $E$ is given by a global minimal model, \cite[Tableau IV]{Papadopoulos1993} implies that $v_{2}(c_{6})\in\left\{  0,3\right\}  $. By Lemma \ref{Lem:QuadTwist}, $E^{D}$ and $E^{m}$ are $\mathbb{Q}$-isomorphic. Now let $\left[  \widetilde{u}_{D},\widetilde{r}_{D},\widetilde{s}_{D},\widetilde{w}_{D}\right]  $ be a $\mathbb{Q}$-isomorphism from $E^{m}$ onto a global minimal model for $E^{D}$. Then, \cite[Proposition 2.5]{Pal} implies that
\[
\widetilde{u}_{D}=\left\{
\begin{array}
[c]{cl}
\frac{1}{2} & \text{if }m\equiv3\ \operatorname{mod}4\text{ or }m\equiv2\ \operatorname{mod}4\text{ with }v_{2}(c_{6})=0,\\
1 & \text{if }m\equiv2\ \operatorname{mod}4\text{ with }v(c_{6})=3.
\end{array}
\right.
\]
Now let $\left[  u_{D}^{\prime},s_{D}^{\prime},r_{D}^{\prime},w_{D}^{\prime}\right]  $ be a $\mathbb{Q}$-isomorphism from $E^{D}$ onto $E^{m}$. By Lemma \ref{Lem:QuadTwist}, $u_{D}^{\prime}=2$. The result now follows since if $\left[  u_{D},s_{D},r_{D},w_{D}\right]  $ is a $\mathbb{Q}$-isomorphism from $E^{D}$ onto a global minimal model for $E^{D}$, then it is the case that $u_{D}=u_{D}^{\prime}\widetilde{u}_{D}=2\widetilde{u}_{D}$. 
\end{proof}

\bigskip

We next record the following lemma, which provides an equivalence between a statement involving Kronecker symbols and one of the quantities appearing in Theorem~\ref{1.3} being a square.

\bigskip

\begin{lemma}
\label{Lem:TamRecRel} Let $E/\mathbb{Q}$ be an elliptic curve with conductor $N$. Let $D=2^{a}m$ for $a$ a nonnegative integer and $m$ an odd positive squarefree integer that is coprime to $N$. If $E^{m}$ denotes the quadratic twist of $E$ by $D$, then
\[
\prod_{l|m}c_{l}(E^{D}/\mathbb{Q})\text{ is a square}\qquad
\Longleftrightarrow\qquad\left(  \frac{\Delta}{m}\right)  =\prod_{l
|m}\left(  \frac{\Delta}{l}\right)  =1.
\]
Moreover, $\prod_{l|m}c_{l}(E^{D}/\mathbb{Q})$ is a power of $2$.
\end{lemma}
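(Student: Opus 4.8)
The plan is to reduce the statement to a purely local analysis at each odd prime $l \mid m$ and then to assemble the local data via the multiplicativity of the Kronecker symbol. Since $m$ is coprime to $N$, the curve $E/\mathbb{Q}$ has good reduction at every prime $l \mid m$; as $m$ is odd and squarefree we have $\val_l(D) = 1$, so the local quadratic character attached to $D$ is ramified at $l$. Consequently $E^D/\mathbb{Q}_l$ is a ramified quadratic twist of a curve with good reduction, and Proposition 5 of \cite{Rubin07} applies: $E^D$ acquires additive reduction of Kodaira type $I_0^*$ at $l$, with $c_l(E^D/\mathbb{Q}) \in \{1,2,4\}$. In particular every factor is a power of $2$, which already yields the final assertion that $\prod_{l\mid m} c_l(E^D/\mathbb{Q})$ is a power of $2$.

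First I would record the precise shape of the local Tamagawa number. For type $I_0^*$ the number of $\mathbb{F}_l$-rational components equals $1$ plus the number of roots in $\mathbb{F}_l$ of the cubic governing the $2$-torsion of the good reduction of $E$ at $l$, whose discriminant differs from $\Delta$ only by a square (an explicit power of $2$ times a unit square, as $\Delta = 16\,\disc$ of the $2$-division cubic in short Weierstrass form). Thus $c_l(E^D/\mathbb{Q}) = 4$ when the cubic splits completely, $c_l(E^D/\mathbb{Q}) = 2$ when it has exactly one root, and $c_l(E^D/\mathbb{Q}) = 1$ when it is irreducible over $\mathbb{F}_l$. A separable cubic over $\mathbb{F}_l$ has either three roots or no roots precisely when its discriminant is a square, and exactly one root precisely when its discriminant is a nonsquare. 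Since the discriminant of this cubic has the same class modulo squares as $\Delta$ at the odd prime $l$, I obtain the clean dichotomy
\[
c_l(E^D/\mathbb{Q}) \text{ is a square} \iff c_l(E^D/\mathbb{Q}) \in \{1,4\} \iff \left(\frac{\Delta}{l}\right) = 1,
\]
while $c_l(E^D/\mathbb{Q}) = 2$ exactly when $\left(\frac{\Delta}{l}\right) = -1$. I emphasize that this does not require distinguishing the two ramified quadratic extensions of $\mathbb{Q}_l$, since only the square class of $c_l$ is needed and that is controlled solely by the parity of the number of rational roots.

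To conclude, I would compute the $2$-adic valuation of the product. The quantity $\val_2\big(c_l(E^D/\mathbb{Q})\big)$ equals $0,1,2$ according as $c_l \in \{1,2,4\}$, so
\[
\val_2\!\left(\prod_{l\mid m} c_l(E^D/\mathbb{Q})\right) \equiv \#\{\, l \mid m : c_l(E^D/\mathbb{Q}) = 2\,\} \pmod 2.
\]
Hence $\prod_{l\mid m} c_l(E^D/\mathbb{Q})$ is a square if and only if the number of primes $l \mid m$ with $\left(\frac{\Delta}{l}\right) = -1$ is even, which by multiplicativity of the Kronecker symbol together with the squarefreeness of $m$ is equivalent to $\prod_{l\mid m}\left(\frac{\Delta}{l}\right) = \left(\frac{\Delta}{m}\right) = 1$. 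This is exactly the asserted equivalence.

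The main obstacle is the local input in the second paragraph: pinning down that the square class of $c_l(E^D/\mathbb{Q})$ for the type $I_0^*$ reduction is governed precisely by $\left(\frac{\Delta}{l}\right)$, uniformly in which ramified quadratic twist occurs at $l$. I expect to handle this by quoting Proposition 5 of \cite{Rubin07} for the value of $c_l(E^D/\mathbb{Q})$ and by identifying the discriminant of the relevant $2$-division cubic with $\Delta$ modulo squares; the remaining steps are the elementary count of roots over $\mathbb{F}_l$ and the bookkeeping with Kronecker symbols.
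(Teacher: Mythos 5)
Your proposal is correct and takes essentially the same route as the paper's proof: both pass to the completed-square model $y^{2}=f(x)$ with $\Delta=16\operatorname{disc}(f)$, invoke Proposition 5 of \cite{Rubin07} to identify type $\mathrm{I}_{0}^{\ast}$ with $c_{l}(E^{D}/\mathbb{Q})=1+\#\{x\in\mathbb{F}_{l}\mid f(x)\equiv 0\ \operatorname{mod}l\}$, translate the root count into the value of $\left(\frac{\Delta}{l}\right)$, and conclude by multiplicativity of the Kronecker symbol over $l\mid m$. The only difference is cosmetic: the paper additionally cites \cite{Comalada} alongside \cite{Rubin07} for the reduction-type statement.
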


\begin{proof}
Let $E$ be given by a global minimal model $y^{2}+a_{1}xy+a_{3}y=x^{3}+a_{2}x^{2}+a_{4}x+a_{6}$ with minimal discriminant $\Delta$. Then, the admissible change of variables $\left[  1,0,\frac{-a_{1}}{2},\frac{-a_{3}}{2}\right]  $ results in a $\mathbb{Q}$-isomorphic affine Weierstrass model $y^{2}=f(x)$, where $f(x)\in\mathbb{Z}\!\left[  \frac{1}{2}\right]  \left[  x\right]  $ is a cubic polynomial. In particular, $y^{2}=f(x)$ is a minimal model for each odd prime.

\bigskip

Now suppose that $l$ is an odd prime dividing $D$. Equivalently, $l|m$. Since $N$ and $D$ are coprime, $E$ has good reduction at $l$ and $E^{D}$ has additive reduction at $l$. By \cite[Proposition~1]{Comalada} and \cite[Proposition~5]{Rubin07}, we have that $\operatorname*{typ}_{l}\!\left(  E^{D}\right)  =\mathrm{{I}_{0}^{\ast}}$ and
\[
c_{l}\!\left(  E^{D}/\mathbb{Q}\right)  =1+\#\left\{  x\in\mathbb{F}_{l}\mid f(x)\equiv
0\ \operatorname{mod}l\right\}  \in\left\{  1,2,4\right\}  .
\]
By construction, we have that $\Delta=16\operatorname*{disc}(f)$, where $\operatorname*{disc}(f)$ denotes the discriminant of $f$. Since $E$ has good reduction at $l$, we have that $l$ is unramified in $K=\mathbb{Q}\!\left(  \sqrt{\Delta}\right)  $. Thus,
\begin{align*}
\#\left\{  x\in\mathbb{F}_{l}\mid f(x)\equiv0\ \operatorname{mod}l\right\}    & =\left\{
\begin{array}
[c]{cl}
0,3 & \text{if }\left(  \frac{\Delta}{l}\right)  =1,\\
1 & \text{if }\left(  \frac{\Delta}{l}\right)  =-1,
\end{array}
\right.  \\
\Longrightarrow\qquad c_{l}(E^{D}/\mathbb{Q})  & =\left\{
\begin{array}
[c]{cl}
1,4 & \text{if }\left(  \frac{\Delta}{l}\right)  =1,\\
2 & \text{if }\left(  \frac{\Delta}{l}\right)  =-1.
\end{array}
\right.
\end{align*}
It now follows that $\prod_{l|m}(c_{l}E^{D}/\mathbb{Q})$ is a power of $2$ and $\prod_{l|m}c_{l}(E^{D}/\mathbb{Q})$ is a square if and only if $\left(  \frac{\Delta}{m}\right)  =\prod_{l|m}\left(  \frac{\Delta}{l}\right)  =1$.
\end{proof}

\bigskip

Our following result provides us with explicit conditions to determine the Kronecker symbol~$\left(\frac{\Delta}{m}\right) $ in Lemma~\ref{Lem:TamRecRel}.

\bigskip

\begin{lemma}
\label{Lem:QuadRec}Let $E/\mathbb{Q}$ be an elliptic curve with minimal discriminant $\Delta$ and conductor $N=N_{+}N_{-}$, where $N_{+}$ and $N_{-}$ are coprime with $N_{-}$ squarefree. Let $D$ be a positive fundamental discriminant that satisfies the modified Heegner hypothesis with respect to $\left(  N_{+},N_{-}\right)  $. In particular, $D=2^{a}m$ for $a\in\{0,2,3\}$ and $m$ a squarefree positive odd integer. Set $b=\sum_{p|N_{-}}v_{p}(\Delta)$. Then,

\begin{enumerate}
\item if $v_{2}(D)=0$, then $\left(  \frac{\Delta}{D}\right)  =\left(\frac{\Delta}{m}\right)  =\left(  -1\right)  ^{b}$;

\item if $v_{2}(D)=2$, then%
\[
\left(  \frac{\Delta}{m}\right)  =\left\{
\begin{array}
[c]{cl}%
\left(  -1\right)  ^{b} & \text{if }\Delta\equiv1\ \operatorname{mod}4,\\
\left(  -1\right)  ^{b+1} & \text{if }\Delta\equiv3\ \operatorname{mod}4.
\end{array}
\right.
\]

\item if $v_{2}(D)=3$, then%
\[
\left(  \frac{\Delta}{m}\right)  =\left\{
\begin{array}
[c]{cl}
\left(  -1\right)  ^{b} & \text{if }\left(  i\right)  \ \Delta\equiv1 \ \operatorname{mod}8\text{ or }\left(  ii\right)  \ \Delta\equiv 3\ \operatorname{mod}8\text{ and }m\equiv3\ \operatorname{mod}4\text{ or}\ \\
& \qquad\left(  iii\right)  \ \Delta\equiv7\ \operatorname{mod}8\text{ and }m\equiv1\ \operatorname{mod}4,\\
\left(  -1\right)  ^{b+1} & \text{if }\left(  i\right)  \ \Delta\equiv5 \ \operatorname{mod}8\text{ or }\left(  ii\right)  \ \Delta\equiv 3\ \operatorname{mod}8\text{ and }m\equiv1\ \operatorname{mod}4\text{ or}\ \\
& \qquad\left(  iii\right)  \ \Delta\equiv7\ \operatorname{mod}8\text{ and }m\equiv3\ \operatorname{mod}4.
\end{array}
\right.
\]

\end{enumerate}
\end{lemma}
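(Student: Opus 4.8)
The plan is to reduce the entire statement to the quadratic reciprocity law for Jacobi symbols, exploiting the fact that the modified Heegner hypothesis determines $\left(\frac{D}{p}\right)$ for every prime $p\mid N$. First I would record the structural input: since $E$ is given by a global minimal model and $D$ is coprime to $N$, the primes dividing $\Delta$ are exactly the primes of bad reduction, so $\Delta=\sgn(\Delta)\,2^{v_2(\Delta)}\,\Delta_{\mathrm{odd}}$ with $\Delta_{\mathrm{odd}}=\prod_{p\mid N,\ p\text{ odd}}p^{v_p(\Delta)}>0$, and $\gcd(m,\Delta)=1$. Because $m$ is odd and positive, $\left(\frac{\cdot}{m}\right)$ is the Jacobi symbol and is completely multiplicative in its top entry, so
\[
\left(\frac{\Delta}{m}\right)=\left(\frac{\sgn\Delta}{m}\right)\left(\frac{2}{m}\right)^{v_2(\Delta)}\prod_{\substack{p\mid N\\ p\text{ odd}}}\left(\frac{p}{m}\right)^{v_p(\Delta)}.
\]

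Next, for each odd $p\mid N$ I would flip $\left(\frac{p}{m}\right)$ by reciprocity and substitute $m=D/2^{a}$:
\[
\left(\frac{p}{m}\right)=\left(\frac{m}{p}\right)(-1)^{\frac{p-1}{2}\frac{m-1}{2}}=\left(\frac{D}{p}\right)\left(\frac{2}{p}\right)^{a}(-1)^{\frac{p-1}{2}\frac{m-1}{2}}.
\]
Here the Heegner hypothesis enters decisively: $\left(\frac{D}{p}\right)=+1$ for $p\mid N_+$ and $-1$ for $p\mid N_-$. Hence $\prod_{p\mid N,\ p\text{ odd}}\left(\frac{D}{p}\right)^{v_p(\Delta)}=(-1)^{b_{\mathrm{odd}}}$, where $b_{\mathrm{odd}}=\sum_{p\mid N_-,\ p\text{ odd}}v_p(\Delta)$; this produces the leading $(-1)^{b}$. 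The leftover factors split off as (i) $\prod_p\left(\frac{2}{p}\right)^{a\,v_p(\Delta)}=\left(\frac{2}{\Delta_{\mathrm{odd}}}\right)^{a}$, (ii) $\prod_p(-1)^{\frac{p-1}{2}\frac{m-1}{2}v_p(\Delta)}$, which is trivial when $m\equiv1\bmod4$ and equals $\left(\frac{-1}{\Delta_{\mathrm{odd}}}\right)$ when $m\equiv3\bmod4$, together with (iii) the sign factor $\left(\frac{\sgn\Delta}{m}\right)$; throughout I use that $\left(\frac{-1}{\cdot}\right)$ and $\left(\frac{2}{\cdot}\right)$ are multiplicative on positive odd integers.

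Then I would dispatch the three cases. When $v_2(D)=2$ or $3$ we have $2\nmid N$, so $v_2(\Delta)=0$, $b=b_{\mathrm{odd}}$, and $\Delta$ is odd with $\Delta_{\mathrm{odd}}=|\Delta|$; the surviving factors are thus functions of $\Delta\bmod 8$. For $v_2(D)=2$ the exponent $a$ is even so (i) is trivial, $m\equiv3\bmod4$ is forced, and $\sgn(\Delta)\left(\frac{-1}{\Delta_{\mathrm{odd}}}\right)$ equals $+1$ or $-1$ according as $\Delta\equiv1$ or $3\bmod4$, which is exactly the claimed dichotomy. For $v_2(D)=3$ the factor (i) equals $\left(\frac{2}{|\Delta|}\right)$; combining this with the $\Delta\bmod4$ contribution of $\sgn(\Delta)\left(\frac{-1}{\Delta_{\mathrm{odd}}}\right)$ (present only when $m\equiv3\bmod4$) and the sign factor, I would tabulate the four residues $\Delta\equiv1,3,5,7\bmod8$ against $m\bmod4$ and check agreement, term by term, with the stated eight-way list. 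Finally, when $v_2(D)=0$ we have $D=m\equiv1\bmod4$, so the sign factor and (ii) are trivial and only $\left(\frac{2}{m}\right)^{v_2(\Delta)}$ can contribute; this is precisely where $2\mid N$ must be permitted, and I would invoke the splitting of $2$ in $\Q(\sqrt{D})$ (split $\Leftrightarrow D\equiv1\bmod8$, inert $\Leftrightarrow D\equiv5\bmod8$) to conclude $\left(\frac{2}{m}\right)^{v_2(\Delta)}=(-1)^{[2\mid N_-]\,v_2(\Delta)}$, which is exactly the factor upgrading $(-1)^{b_{\mathrm{odd}}}$ to $(-1)^{b}$.

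The main obstacle is the bookkeeping in the $v_2(D)=3$ case: one must confirm that the three independent corrections — the factor $\left(\frac{2}{|\Delta|}\right)$ depending on $\Delta\bmod 8$, the factor $\left(\frac{-1}{\Delta_{\mathrm{odd}}}\right)$ depending on $\Delta\bmod 4$, and the sign factor depending jointly on $\sgn\Delta$ and $m\bmod 4$ — recombine into precisely the eight residue classes recorded in the statement with nothing left over. The subsidiary delicate point is the occurrence of $p=2$ in $N$ in the $v_2(D)=0$ case, which is invisible to ordinary reciprocity and must instead be tracked through the inert/split behavior of $2$ in $\Q(\sqrt{D})$.
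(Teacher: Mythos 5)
Your proposal is correct and follows essentially the same route as the paper: both proofs translate the modified Heegner hypothesis into the values $\left(\frac{D}{p}\right)=+1$ for $p\mid N_+$ and $-1$ for $p\mid N_-$ (yielding the $(-1)^b$), flip symbols via quadratic reciprocity for Jacobi/Kronecker symbols, handle a possible prime $2\mid N$ in the odd-$D$ case through the split/inert behavior of $2$ in $\mathbb{Q}(\sqrt{D})$, and finish the $v_2(D)=3$ case by tabulating $\Delta \bmod 8$ against $m \bmod 4$. The only difference is bookkeeping — you expand prime-by-prime and collect the residual factors $\left(\frac{2}{\Delta_{\mathrm{odd}}}\right)^a$, $\left(\frac{-1}{\Delta_{\mathrm{odd}}}\right)$, $\left(\frac{\operatorname{sgn}\Delta}{m}\right)$, whereas the paper works with the block factorization $\Delta=\pm\Delta_+\Delta_-$ and the symbols $\left(\frac{D}{\Delta_\pm}\right)$ — which is not a substantive distinction.
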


\begin{proof}
Let $F=\mathbb{Q}(\sqrt{D})$ and write $\Delta=\pm\Delta_{+}\Delta_{-}$ where
\[
\Delta_{+}=\prod_{l|N_{+}}l^{v_{l}(\Delta)}\qquad\text{and}\qquad \Delta_{-}=\prod_{p|N_{-}}p^{v_{p}(\Delta)}.
\]
By assumption, $\Delta_{+}$ and $\Delta_{-}$ are coprime. Since $D$ satisfies the modified Heegner hypothesis with respect to $\left(  N_{+},N_{-}\right)  $ we have that each prime $l$ dividing $\Delta_{+}$ splits in $F$. Thus, we have the the Kronecker symbol $\left(  \frac{D}{l}\right)  =1$. Similarly, since each prime $p$ dividing $\Delta_{-}$ is inert in $F$ we have that the Kronecker symbol $\left(  \frac{D}{p}\right)  =-1$. Then, with $b=\sum_{p|N_{-}}v_{p}(\Delta)=\sum_{p|\Delta_{-}}v_{p}(\Delta_{-})$ we have that
\begin{align*}
\left(  \frac{D}{\Delta_{+}}\right)    & =\prod_{l|\Delta_{+}}\left(\frac{D}{l}\right)  =1,\\
\left(  \frac{D}{\Delta_{-}}\right)    & =\prod_{p|N_{-}}\left(  \frac{D}{p^{v_{p}(\Delta)}}\right)  =\prod_{p|N_{-}}\left(  -1\right)^{v_{p}(\Delta)}=\left(  -1\right)  ^{b}.
\end{align*}

\bigskip

Now suppose that $D$ is odd so that $D=m\equiv1\ \operatorname{mod}4$. Write $\Delta_{\pm}=2^{v_{2}(\Delta_{\pm})}\Gamma_{\pm}$ for some odd positive integers $\Gamma_{+}$ and $\Gamma_{-}$. Note that at most one of $\Delta_{+}$ or $\Delta_{-}$ could be even since $\Delta_{+}$ and $\Delta_{-}$ are coprime. Moreover, by assumption we have that if $2$ divides $\Delta_{+}$ (resp. $\Delta_{-}$), then $2$ splits (resp. is inert) in $F$. Consequently, $m\equiv 1\ \operatorname{mod}8$ (resp. $5\ \operatorname{mod}8$) and we have the Kronecker symbol
\[
\left(  \frac{2}{m}\right)  =\left\{
\begin{array}
[c]{cl}%
1 & \text{if }v_{2}(\Delta_{+})>0,\\
-1 & \text{if }v_{2}(\Delta_{-})>0.
\end{array}
\right.
\]
Hence, by properties of the Kronecker symbol and quadratic reciprocity, we have that
\begin{align*}
\left(  \frac{\Delta}{m}\right)  =\left(  \frac{\left\vert \Delta\right\vert}{m}\right)    & =\left(  \frac{2}{m}\right)  ^{v_{2}(\Delta_{+})}\left(\frac{2}{m}\right)  ^{v_{2}(\Delta_{-})}\left(  \frac{\Gamma_{+}\Gamma_{-}}{m}\right)  \\
& =\left(  -1\right)  ^{v_{2}(\Delta_{-})}\left(  \frac{m}{\Gamma_{+}\Gamma_{-}}\right)  \\
& =\left(  -1\right)  ^{v_{2}(\Delta_{-})}\left(  \frac{m}{\Gamma_{-}}\right)\left(  \frac{m}{\Gamma_{+}}\right)  \\
& =\left(  -1\right)  ^{v_{2}(\Delta_{-})}\prod_{l|\Gamma_{-}}\left(\frac{m}{l}\right)  ^{v_{l}(\Gamma_{-})}\prod_{p|\Gamma_{+}}\left(\frac{m}{p}\right)  ^{v_{p}(\Delta_{+})}\\
& =\prod_{l|\Delta_{-}}\left(  -1\right)  ^{v_{l}(\Delta_{-})}\\
& =\left(  -1\right)  ^{b}.
\end{align*}

\bigskip

Next, we consider the case when $D$ is even. Thus, $D=2^{a}m$ with $a\in\left\{  2,3\right\}  $ and $\Delta_{+}\Delta_{-}$ is an odd positive integer. Moreover,
\begin{equation}
1=\left(  \frac{D}{\Delta_{+}}\right)  =\left(  \frac{2}{\Delta_{+}}\right)^{a}\left(  \frac{m}{\Delta_{+}}\right)  \qquad\Longrightarrow\qquad\left(\frac{m}{\Delta_{+}}\right)  =\left\{
\begin{array}
[c]{cl}
1 & \text{if }v_{2}(D)=2,\\
\left(  \frac{2}{\Delta_{+}}\right)   & \text{if }v_{2}(D)=3.
\end{array}
\right.  \label{quadrecDDplus}%
\end{equation}
Since each odd number is a square modulo $8$, we obtain
\[
\left(  \frac{\Delta}{D}\right)  =\left(  \frac{\Delta}{2^{a}}\right)  \left(\frac{\Delta}{m}\right)  =\left(  \frac{\Delta}{m}\right)  .
\]
In addition, we have that%
\[
\left(  -1\right)  ^{b}=\left(  \frac{D}{\Delta_{-}}\right)  =\left(\frac{2^{a}}{\Delta_{-}}\right)  \left(  \frac{m}{\Delta_{-}}\right)  .
\]
Since $m$ and $\Delta_{+}\Delta_{-}$ are positive coprime odd integers, we have by quadratic reciprocity and properties of the Kronecker symbol that
{\footnotesize \begin{align*}
\left(  \frac{\Delta}{m}\right)  =\left(  \frac{\pm1}{m}\right)  \left(\frac{\Delta_{+}\Delta_{-}}{m}\right)    & =\left\{
\begin{array}
[c]{cl}
\left(  \frac{\pm1}{m}\right)  \left(  \frac{m}{\Delta_{+}\Delta_{-}}\right) & \text{if }\Delta_{+}\Delta_{-}\equiv1\ \operatorname{mod}4\text{ or } m\equiv1\ \operatorname{mod}4,\\
-\left(  \frac{\pm1}{m}\right)  \left(  \frac{m}{\Delta_{+}\Delta_{-}}\right) & \text{if }\Delta_{+}\Delta_{-},m\equiv3\ \operatorname{mod}4,
\end{array}
\right.  \\
& =\left\{
\begin{array}
[c]{cl}%
\left(  \frac{\pm1}{m}\right)  \left(  \frac{m}{\Delta_{+}}\right)  \left(\frac{2^{a}}{\Delta_{-}}\right)  \left(  \frac{2^{a}}{\Delta_{-}}\right) \left(  \frac{m}{\Delta_{-}}\right)   & \text{if }\Delta_{+}\Delta_{-}\equiv1\ \operatorname{mod}4\text{ or }m\equiv1\ \operatorname{mod}4,\\
-\left(  \frac{\pm1}{m}\right)  \left(  \frac{m}{\Delta_{+}}\right)  \left(\frac{2^{a}}{\Delta_{-}}\right)  \left(  \frac{2^{a}}{\Delta_{-}}\right) \left(  \frac{m}{\Delta_{-}}\right)   & \text{if }\Delta_{+}\Delta_{-} ,m\equiv3\ \operatorname{mod}4,
\end{array}
\right.  \\
& =\left\{
\begin{array}
[c]{cl}
\left(  \frac{\pm1}{m}\right)  \left(  \frac{m}{\Delta_{+}}\right)  \left(\frac{2^{a}}{\Delta_{-}}\right)  \left(  \frac{D}{\Delta_{-}}\right)   & \text{if }\Delta_{+}\Delta_{-}\equiv1\ \operatorname{mod}4\text{ or } m\equiv1\ \operatorname{mod}4,\\
-\left(  \frac{\pm1}{m}\right)  \left(  \frac{m}{\Delta_{+}}\right)  \left(\frac{2^{a}}{\Delta_{-}}\right)  \left(  \frac{D}{\Delta_{-}}\right)   & \text{if }\Delta_{+}\Delta_{-},m\equiv3\ \operatorname{mod}4,
\end{array}
\right.  \\
& =\left\{
\begin{array}
[c]{cl}%
\left(  \frac{\pm1}{m}\right)  \left(  \frac{m}{\Delta_{+}}\right)  \left( \frac{2^{a}}{\Delta_{-}}\right)  \left(  -1\right)  ^{b} & \text{if } \Delta_{+}\Delta_{-}\equiv1\ \operatorname{mod}4\text{ or }m\equiv 1\ \operatorname{mod}4,\\
-\left(  \frac{\pm1}{m}\right)  \left(  \frac{m}{\Delta_{+}}\right)  \left(\frac{2^{a}}{\Delta_{-}}\right)  \left(  -1\right)  ^{b} & \text{if }\Delta_{+}\Delta_{-},m\equiv3\ \operatorname{mod}4.
\end{array}
\right.
\end{align*}}
We now consider the cases $v_{2}(D)=2$ and $v_{2}(D)=3$ separately.

\bigskip

\textbf{Case 1.} Suppose $v_{2}(D)=2$. Thus, $m\equiv3\ \operatorname{mod}4$ as $D$ is a positive fundamental discriminant. Then, $\left(  \frac{2^{a}}{\Delta_{-}}\right)  =\left(  \frac{m}{\Delta_{+}}\right)  =1$, where the second equality comes from (\ref{quadrecDDplus}). Therefore,
\begin{align*}
\left(  \frac{\Delta}{m}\right)    & =\left\{
\begin{array}
[c]{cl}%
\left(  \frac{\pm1}{m}\right)  \left(  -1\right)  ^{b} & \text{if }\Delta
_{+}\Delta_{-}\equiv1\ \operatorname{mod}4\text{ or }m\equiv
1\ \operatorname{mod}4,\\
-\left(  \frac{\pm1}{m}\right)  \left(  -1\right)  ^{b} & \text{if }\Delta
_{+}\Delta_{-},m\equiv3\ \operatorname{mod}4,
\end{array}
\right.  \\
& =\left\{
\begin{array}
[c]{cl}%
\left(  -1\right)  ^{b} & \text{if }\Delta>0\text{ and }\Delta_{+}\Delta_{-}\equiv1\ \operatorname{mod}4,\\
\left(  -1\right)  ^{b+1} & \text{if }\Delta<0\text{ and }\Delta_{+}\Delta_{-}\equiv1\ \operatorname{mod}4,\\
\left(  -1\right)  ^{b+1} & \text{if }\Delta>0\text{ and }\Delta_{+}\Delta_{-}\equiv3\ \operatorname{mod}4,\\
\left(  -1\right)  ^{b} & \text{if }\Delta<0\text{ and }\Delta_{+}\Delta_{-}\equiv3\ \operatorname{mod}4,
\end{array}
\right.  \\
& =\left\{
\begin{array}
[c]{cl}%
\left(  -1\right)  ^{b} & \text{if }\Delta\equiv1\ \operatorname{mod}4,\\
\left(  -1\right)  ^{b+1} & \text{if }\Delta\equiv3\ \operatorname{mod}4.
\end{array}
\right.
\end{align*}

\bigskip

\textbf{Case 2.} Suppose $v_{2}(D)=3$. Then, $\left(  \frac{2^{a}}{\Delta_{-}}\right)  =\left(  \frac{2}{\Delta_{-}}\right)  $ and $\left(  \frac{m}{\Delta_{+}}\right)  =\left(  \frac{2}{\Delta_{+}}\right)  $ by (\ref{quadrecDDplus}). Since $\Delta=\pm\Delta_{+}\Delta_{-}$, we have by properties of the Kronecker symbol that $\left(  \frac{2}{\Delta_{+}\Delta_{-}}\right)  =\left(  \frac{2}{\left\vert \Delta\right\vert }\right)  $. Now observe that
\begin{align*}
\left(  \frac{\Delta}{m}\right)    & =\left\{
\begin{array}
[c]{cl}%
\left(  \frac{\pm1}{m}\right)  \left(  \frac{2}{\Delta_{+}}\right)  \left(\frac{2}{\Delta_{-}}\right)  \left(  -1\right)  ^{b} & \text{if }\Delta_{+}\Delta_{-}\equiv1\ \operatorname{mod}4\text{ or }m\equiv 1\ \operatorname{mod}4,\\
-\left(  \frac{\pm1}{m}\right)  \left(  \frac{2}{\Delta_{+}}\right)  \left(\frac{2}{\Delta_{-}}\right)  \left(  -1\right)  ^{b} & \text{if }\Delta_{+}\Delta_{-},m\equiv3\ \operatorname{mod}4,
\end{array}
\right.  \\
& =\left\{
\begin{array}
[c]{cl}
\left(  \frac{\pm1}{m}\right)  \left(  \frac{2}{\left\vert \Delta\right\vert}\right)  \left(  -1\right)  ^{b} & \text{if }\Delta_{+}\Delta_{-}\equiv1\ \operatorname{mod}4\text{ or }m\equiv1\ \operatorname{mod}4,\\
-\left(  \frac{\pm1}{m}\right)  \left(  \frac{2}{\left\vert \Delta\right\vert}\right)  \left(  -1\right)  ^{b} & \text{if }\Delta_{+}\Delta_{-},m\equiv3\ \operatorname{mod}4.
\end{array}
\right.
\end{align*}

\bigskip

\qquad\textbf{Subcase 2a.} Suppose $\Delta=\pm\Delta_{+}\Delta_{-}\equiv1\ \operatorname{mod}8$ so that $\left(  \frac{2}{\left\vert \Delta\right\vert }\right)  =1$ and $\Delta_{+}\Delta_{-}\equiv\pm 1\ \operatorname{mod}4$. Then, if $\Delta>0$, we have that $\left( \frac{\Delta}{m}\right)  =\left(  -1\right)  ^{b}$. If $\Delta<0$, then $\Delta_{+}\Delta_{-}\equiv3\ \operatorname{mod}4$ and
\begin{align*}
\left(  \frac{\Delta}{m}\right)   &  =\left\{
\begin{array}
[c]{cl}
\left(  \frac{-1}{m}\right)  \left(  -1\right)  ^{b} & \text{if } m\equiv1\ \operatorname{mod}4,\\
-\left(  \frac{-1}{m}\right)  \left(  -1\right)  ^{b} & \text{if } m\equiv3\ \operatorname{mod}4,
\end{array}
\right.  \\
&  =\left(  -1\right)  ^{b}.
\end{align*}

\bigskip

\qquad\textbf{Subcase 2b.} Suppose $\Delta=\pm\Delta_{+}\Delta_{-}\equiv3\ \operatorname{mod}8$ so that $\left(  \frac{2}{\left\vert \Delta\right\vert }\right)  =-1$ and $\Delta_{+}\Delta_{-}\equiv \mp1\ \operatorname{mod}4$. If $\Delta>0$, then $\Delta_{+}\Delta_{-} \equiv3\ \operatorname{mod}4$ and
\[
\left(  \frac{\Delta}{m}\right)  =\left\{
\begin{array}
[c]{cl}
\left(  -1\right)  ^{b+1} & \text{if }m\equiv1\ \operatorname{mod}4,\\
\left(  -1\right)  ^{b} & \text{if }m\equiv3\ \operatorname{mod}4.
\end{array}
\right.
\]
Now suppose $\Delta<0$ so that $\Delta_{+}\Delta_{-}\equiv 1\ \operatorname{mod}4$. Then,
\[
\left(  \frac{\Delta}{m}\right)  =\left(  \frac{-1}{m}\right)  \left(
-1\right)  ^{b+1}=\left\{
\begin{array}
[c]{cl}
\left(  -1\right)  ^{b+1} & \text{if }m\equiv1\ \operatorname{mod}4,\\
\left(  -1\right)  ^{b} & \text{if }m\equiv3\ \operatorname{mod}4.
\end{array}
\right.
\]

\bigskip

\qquad\textbf{Subcase 2c.} Suppose $\Delta=\pm\Delta_{+}\Delta_{-} \equiv5\ \operatorname{mod}8$ so that $\left(  \frac{2}{\left\vert \Delta\right\vert }\right)  =-1$ and $\Delta_{+}\Delta_{-}\equiv \pm1\ \operatorname{mod}4$. If $\Delta>0$, then $\Delta_{+}\Delta_{-} \equiv1\ \operatorname{mod}4$ and thus $\left(  \frac{\Delta}{m}\right) =\left(  -1\right)  ^{b+1}$. If $\Delta<0$, then $\Delta_{+}\Delta_{-} \equiv3\ \operatorname{mod}4$ and thus
\begin{align*}
\left(  \frac{\Delta}{m}\right)   &  =\left\{
\begin{array}
[c]{cl}
\left(  \frac{-1}{m}\right)  \left(  -1\right)  ^{b+1} & \text{if } m\equiv1\ \operatorname{mod}4,\\
-\left(  \frac{-1}{m}\right)  \left(  -1\right)  ^{b+1} & \text{if } m\equiv3\ \operatorname{mod}4,
\end{array}
\right.  \\
&  =\left(  -1\right)  ^{b+1}.
\end{align*}

\bigskip

\qquad\textbf{Subcase 2d.} Suppose $\Delta=\pm\Delta_{+}\Delta_{-}\equiv7\ \operatorname{mod}8$ so that $\left(  \frac{2}{\left\vert \Delta\right\vert }\right)  =1$ and $\Delta_{+}\Delta_{-}\equiv\mp 1\ \operatorname{mod}4$. If $\Delta>0$, then $\Delta_{+}\Delta_{-}\equiv3\ \operatorname{mod}4$ and thus
\[
\left(  \frac{\Delta}{m}\right)  =\left\{
\begin{array}
[c]{cl}
\left(  -1\right)  ^{b} & \text{if }m\equiv1\ \operatorname{mod}4,\\
\left(  -1\right)  ^{b+1} & \text{if }m\equiv3\ \operatorname{mod}4.
\end{array}
\right.
\]
If $\Delta<0$, then $\Delta_{+}\Delta_{-}\equiv1\ \operatorname{mod}4$ which
gives
\[
\left(  \frac{\Delta}{m}\right)  =\left(  \frac{-1}{m}\right)  \left(
-1\right)  ^{b}=\left\{
\begin{array}
[c]{cl}%
\left(  -1\right)  ^{b} & \text{if }m\equiv1\ \operatorname{mod}4,\\
\left(  -1\right)  ^{b+1} & \text{if }m\equiv3\ \operatorname{mod}4.
\end{array}
\right.
\]
\end{proof}

\bigskip

We now have all the necessary ingredients to prove Theorem~\ref{1.3}:

\bigskip

\begin{proof}
[Proof of Theorem~\ref{1.3}]Let $\Delta=\pm\Delta_{+}\Delta_{-}$ where
\[
\Delta_{+}=\prod_{l|N_{+}}l^{v_{l}(\Delta)}\qquad\text{and}\qquad \Delta_{-}=\prod_{p|N_{-}}p^{v_{p}(\Delta)}.
\]
By assumption, $\Delta_{+}$ and $\Delta_{-}$ are coprime. Now set
\[
b=\sum_{p|N_{-}}v_{p}(\Delta)=\sum_{p|\Delta_{-}}v_{p}(\Delta_{-}).
\]

\bigskip

Next, recall that if $p|N_{-}$, then $\widetilde{c}_{p}(E/\mathbb{Q})=2-\left(  v_{p}(\Delta)\ \operatorname{mod}2\right)  =2^{1-(v_{p}(\Delta)\ \operatorname{mod}2)}$. Consequently,
\begin{equation}
\prod_{p|N_{-}}\widetilde{c}_{p}(E/\mathbb{Q})=\prod_{p|N_{-}}2^{1-(v_{p}(\Delta)\ \operatorname{mod}2)} =2^{\Sigma_{p|N_{-}}\left(  1-(v_{p}(\Delta)\ \operatorname{mod}2)\right)}.\label{cptil1}
\end{equation}
Moreover,
\begin{align}
\sum_{p|N_{-}}\left(  1-(v_{p}(\Delta)\ \operatorname{mod}2)\right)    &
=\omega(N_{-})-\sum_{p|N_{-}}(v_{p}(\Delta)\ \operatorname{mod}2)\nonumber\\
& \equiv\omega(N_{-})+b\ \operatorname{mod}2\nonumber\\
& \equiv\left\{
\begin{array}
[c]{cl}
1+b\ \operatorname{mod}2 & \text{if }\omega(N_{-})\text{ is odd,}\\
b\ \operatorname{mod}2 & \text{if }\omega(N_{-})\ \text{is even.}
\end{array}
\right.  \label{cptil2}%
\end{align}

\bigskip

Now suppose that $D$ is odd so that $D\equiv1\ \operatorname{mod}4$ as it is a positive fundamental discriminant. By Lemma \ref{lem:uvalue}, $u_{D}=1$. Then, (\ref{cptil1}) and \ref{cptil2} imply that
\begin{align}
& \frac{u_{D}}{2^{\omega(N_{-})}}\prod_{p|N_{-}}\widetilde{c}_{p}(E/\mathbb{Q})=2^{\Sigma_{p|N_{-}}\left(  1-(v_{p}(\Delta)\ \operatorname{mod}2)\right) -\omega(N_{-})}\label{cptil3}\\
& \Longrightarrow\qquad\sum_{p|N_{-}}\left(  1-(v_{p}(\Delta)\ \operatorname{mod}2)\right)  -\omega(N_{-}) \equiv b\ \operatorname{mod}2.\nonumber
\end{align}
It follows that (\ref{cptil3}) is a power of $2$ and it is an even power of $2$ if and only if $\left(  -1\right)  ^{b}=1$. On the other hand, Lemmas \ref{Lem:TamRecRel} and \ref{Lem:QuadRec} imply that
\[
\prod_{l|D}c_{l}(E^{D}/\mathbb{Q})
\]
is a power of $2$, and it is an even power of $2$ if and only if $\left(\frac{\Delta}{D}\right)  =\left(  \frac{\Delta}{m}\right)  =\left(  -1\right)^{b}=1$. It follows that \eqref{e1.2} is an even power of $2$.

\bigskip

It remains to consider the case when $D$ is even. To this end, write $D=2^{a}m$ for $a\in\left\{  2,3\right\}  $ and $m$ a positive odd integer. Note that if $a=2$, then $m\equiv3\ \operatorname{mod}4$ as $D$ is a fundamental discriminant. Since $D$ and the conductor $N$ of $E$ are coprime, we have that $E$ has good reduction at $2$. We may suppose that $E$ is given by a global minimal model that is also a $2$-strongly-minimal model \cite[Proposition 3.5]{BRSTTW}. In particular, $E$ is given by an affine Weierstrass model
\[
E:y^{2}+a_{1}xy+a_{3}y=x^{3}+a_{2}x^{2}+a_{4}x+a_{6}%
\]
where exactly one of the following is true:

\begin{enumerate}
\item $v_{2}(a_{1})=0,$ $v_{2}(a_{3})\geq2$, and either $\left(  i\right)  $ $v_{2}(a_{4})\geq1$ and $v_{2}(a_{6})=0$ or $\left(  ii\right)  \ v_{2}(a_{4})=0$ and $v_{2}(a_{6})\geq1$;

\item $v_{2}(a_{1}),v_{2}(a_{2})\geq1$ and $v_{2}(a_{3})=0$.
\end{enumerate}

\bigskip

Henceforth, we identify which $2$-strongly-minimal model of $E$ we are considering by the $2$-adic valuation of $a_{1}$. Next, let $c_6$ be the invariant associated to $E$. In particular, $c_6$ is the invariant associated to a global minimal model for $E$. Since
$$c_{6}=-\left(  a_{1}^{2}+4a_{2}\right)  ^{3}+36\left(  a_{1}^{2}%
+4a_{2}\right)  \left(  2a_{4}+a_{1}a_{3}\right)  -216\left(  a_{3}^{2}%
+4a_{6}\right),$$
it is verified from our assumptions that
\begin{equation}
v_{2}(c_{6})=\left\{
\begin{array}
[c]{cl}
0 & \text{if }v_{2}(a_{1})=0,\\
3 & \text{if }v_{2}(a_{1})\geq1.
\end{array}
\right.  \label{c6val}
\end{equation}

\bigskip

Next, we observe that since $E$ has good reduction at $2$, and $E^{D}$ has additive reduction at~$2$, then \cite[Theorem 5.1]{BRSTTW} implies that $c_{2}(E^{D}/\mathbb{Q})\in\left\{  1,2,4\right\}  $. From Lemma \ref{lem:uvalue} and (\ref{cptil1}) we have that
\[
\Lambda=\frac{u_{D}}{2^{\omega(N_{-})}}c_{2}(E^{D}/\mathbb{Q})\prod_{p|N_{-}}\widetilde{c}_{p}(E/\mathbb{Q})
\]
is a power of $2$. This paired with Lemma \ref{Lem:TamRecRel} and the fact that $u_{D}\prod_{p|N_{-}}\widetilde{c}_{p}(E/\mathbb{Q})\prod_{l|D}c_{l}(E^{D}/\mathbb{Q})$ is a positive integer leads us to conclude that
\[
\frac{u_{D}}{2^{\omega(N_{-})}}\prod_{p|N_{-}}\widetilde{c}_{p}(E/\mathbb{Q})\prod_{l|D}c_{l}(E^{D}/\mathbb{Q})=\Lambda\prod_{l|m}c_{l}(E^{D}/\mathbb{Q})=2^{k}
\]
for some integer $k$. In particular, to prove the theorem, it now suffices to show that $\Lambda\prod_{l|m}c_{l}(E^{D}/\mathbb{Q})$ is a square. We now show this by cases.

\bigskip

\textbf{Case 1.} Suppose $v_{2}(D)=2$ so that $m\equiv3\ \operatorname{mod}4$
is squarefree. By Lemma \ref{Lem:QuadTwist}, $E^{D}$ and $E^{m}$ are
$\mathbb{Q}$-isomorphic. By \cite[Theorem 5.1]{BRSTTW},
\[
\operatorname*{typ}(E^{D})=\operatorname*{typ}(E^{m})=\left\{
\begin{array}
[c]{cl}%
\mathrm{{I}_{4}^{\ast}} & \text{if }v_{2}(a_{1})=0,\\
\mathrm{{II}^{\ast}} & \text{if }v_{2}(a_{1})\geq1.
\end{array}
\right.
\]
Moreover,%
\begin{equation}
c_{2}(E^{D}/\mathbb{Q})=\left\{
\begin{array}
[c]{cl}%
1 & \text{if }v_{2}(a_{1})\geq1,\\
2 & \text{if }v_{2}(a_{1})=0\text{ and }a_{6}\equiv1,2\ \operatorname{mod}4,\\
4 & \text{if }v_{2}(a_{1})=0\text{ and }a_{6}\equiv0,3\ \operatorname{mod}4.
\end{array}
\right.  \label{Dval2Tama}%
\end{equation}
By (\ref{cptil1}), (\ref{Dval2Tama}), and Lemma \ref{lem:uvalue} we have that%
\begin{align*}
v_{2}(\Lambda) & =v_{2}\!\left(  c_{2}(E^{D}/\mathbb{Q})\right)
+\omega(N_{-})-\sum_{p|N_{-}}(v_{p}(\Delta)\ \operatorname{mod}2)-
\omega(N_{-})  \\
& \equiv\left\{
\begin{array}
[c]{cl}%
b\ \operatorname{mod}2 & \text{if\ }\left(  i\right)  \ v_{2}(a_{1})=0\text{
and }a_{6}\equiv0,3\ \operatorname{mod}4\text{ or }\left(  ii\right)
\ v_{2}(a_{1})\geq1\\
1+b\ \operatorname{mod}2 & \text{if }v_{2}(a_{1})=0\text{ and }a_{6}%
\equiv1,2\ \operatorname{mod}4.
\end{array}
\right.
\end{align*}
Thus,
\begin{equation}%
\begin{array}
[c]{l}%
\Lambda\text{ is a square}\qquad\Longleftrightarrow\qquad\\
\left\{
\begin{array}
[c]{ll}%
\left(  -1\right)  ^{b}=1 & \text{if\ }\left(  i\right)  \ v_{2}%
(a_{1})=0\text{ and }a_{6}\equiv0,3\ \operatorname{mod}4\text{ or }\left(
ii\right)  \ v_{2}(a_{1})\geq1\\
\left(  -1\right)  ^{b+1}=1 & \text{if }v_{2}(a_{1})=0\text{ and }a_{6}%
\equiv1,2\ \operatorname{mod}4.
\end{array}
\right.
\end{array}
\label{uvalueDval2}%
\end{equation}
In addition, Lemmas \ref{Lem:TamRecRel} and \ref{Lem:QuadRec} imply that
$\prod_{l|m}c_{l}(E^{D}/\mathbb{Q})$ is a power of $2$, and
\[
\prod_{l|m}c_{l}(E^{D}/\mathbb{Q})\text{ is a square}\qquad
\Longleftrightarrow\qquad\left(  \frac{\Delta}{m}\right)  =1
\]
where
\begin{equation}
\left(  \frac{\Delta}{m}\right)  =\left\{
\begin{array}
[c]{cl}%
\left(  -1\right)  ^{b} & \text{if }\Delta\equiv1\ \operatorname{mod}4,\\
\left(  -1\right)  ^{b+1} & \text{if }\Delta\equiv3\ \operatorname{mod}4.
\end{array}
\right.  \label{DelonmCase1}%
\end{equation}

\bigskip

\qquad\textbf{Subcase 1a.} Suppose $v_{2}(a_{1})=0$. Then, $v_{2}(a_{3})\geq2$ and either $\left(  i\right)  $ $v_{2}(a_{4})\geq1$ and $v_{2}(a_{6})=0$ or $\left(  ii\right)  \ v_{2}(a_{4})=0$ and $v_{2}(a_{6})\geq1$. Consequently,
\begin{align*}
\Delta &  \equiv a_{4}^{2}-a_{6}\ \operatorname{mod}4\\
&  \equiv\left\{
\begin{array}
[c]{cl}%
1\ \operatorname{mod}4 & \text{if }a_{6}\equiv0,3\ \operatorname{mod}4,\\
3\ \operatorname{mod}4 & \text{if }a_{6}\equiv1,2\ \operatorname{mod}4.
\end{array}
\right.
\end{align*}
From (\ref{DelonmCase1}),
\[
\left(  \frac{\Delta}{m}\right)  =\left\{
\begin{array}
[c]{cl}%
\left(  -1\right)  ^{b} & \text{if }a_{6}\equiv0,3\ \operatorname{mod}4,\\
\left(  -1\right)  ^{b+1} & \text{if }a_{6}\equiv1,2\ \operatorname{mod}4.
\end{array}
\right.
\]
Thus, if $a_{6}\equiv0,3\ \operatorname{mod}4$, then $\prod_{l|m}c_{l}(E^{D}/\mathbb{Q})$ is a square if and only if $b$ is even. But (\ref{uvalueDval2}) gives that $\Lambda$ is also a square if and only if $b$ is even, which shows that the theorem holds in this case. Similarly, if $a_{6}\equiv1,2\ \operatorname{mod}4$, then $\prod_{l|m}c_{l}(E^{D}/\mathbb{Q})$ is a square if and only if $b$ is odd. We also know from (\ref{uvalueDval2}) that $\Lambda$ is a square if and only if $b$ is odd, which concludes this subcase.

\bigskip

\qquad\textbf{Subcase 1b.} Suppose $v_{2}(a_{1})\geq1$ so that $v_{2}(a_{3})=0$. Then, $\Delta\equiv a_{3}^{4}\ \operatorname{mod}4$ implies that $\Delta\equiv1\ \operatorname{mod}4$. By Lemma \ref{Lem:QuadRec},
\[
\left(  \frac{\Delta}{m}\right)  =\left(  -1\right)  ^{b}.
\]
Consequently, Lemma \ref{Lem:TamRecRel} implies that $\prod_{l|m}c_{l}(E^{D}/\mathbb{Q})$ is a square if and only if $b$ is even. The theorem now follows in this case since (\ref{uvalueDval2}) gives that $\Lambda$ is a square if and only if $b$ is even.

\bigskip

\textbf{Case 2.} Suppose $v_{2}(D)=3$ and $v_{2}(c_{6})=3$. Thus, $v_{2}(a_{1})\geq1$ by (\ref{c6val}). Further, Lemma~\ref{Lem:QuadTwist} gives that $E^{D}$ and $E^{2m}$ are $\mathbb{Q}$-isomorphic. By \cite[Theorem~5.1]{BRSTTW},
\[
\operatorname*{typ}(E^{D})=\operatorname*{typ}(E^{2m})=\mathrm{{II}}
\]
and $c_{2}(E^{D}/\mathbb{Q})=1$. By (\ref{cptil1}) and Lemma \ref{lem:uvalue},
\begin{align*}
v_{2}(\Lambda) & =1+\omega(N_{-})-\sum_{p|N_{-}}(v_{p}(\Delta)\ \operatorname{mod}2)-  \omega(N_{-})  \\
& \equiv b+1\ \operatorname{mod}2.
\end{align*}
Therefore, $\Lambda$ is a square if and only if $\left(  -1\right)  ^{b+1}=1$. Since $v_{2}(a_{1})\geq1$, we have that $v_{2}(a_{3})=0$. These assumptions imply that $\Delta\equiv5a_{3}^{4}\ \operatorname{mod}8=5\ \operatorname{mod} 8$. By Lemma \ref{Lem:QuadRec}, $\left(  \frac{\Delta}{m}\right)  =\left(-1\right)  ^{b+1}$. It follows from Lemma \ref{Lem:TamRecRel} that
\[
\prod_{l|m}c_{l}(E^{D}/\mathbb{Q})
\]
is a square if and only if $b$ is odd. The theorem now follows in this case since $\Lambda$ is a square if and only if $b$ is odd.

\bigskip

\textbf{Case 3. }Suppose $v_{2}(D)=3$ and $v_{2}(c_{6})=0$. By (\ref{c6val}), $v_{2}(a_{1})=0$, $v_{2}(a_{3})\geq2$, and either $\left(  i\right)  $ $v_{2}(a_{4})=0$ and $v_{2}(a_{6})\geq1$ or $\left(  ii\right)  $ $v_{2}(a_{4})\geq1$ and $v_{2}(a_{6})=0$. By Lemma \ref{Lem:QuadTwist}, $E^{D}$ and $E^{2m}$ are $\mathbb{Q}$-isomorphic. By \cite[Theorem 5.1]{BRSTTW},
\[
\operatorname*{typ}(E^{D})=\operatorname*{typ}(E^{2m})=\mathrm{{I}_{8}^{\ast}.}
\]
Next, let%
\begin{align*}
P_{1} &  =4+16a_{2}+8a_{4}+4a_{6}-2m-2ma_{6}^{2}-4ma_{6},\\
P_{2} &  =a_{3}^{2}-2ma_{6}^{2}+4a_{6}.
\end{align*}
Then, {\it loc. cit.} implies that
\[
c_{2}(E^{D}/\mathbb{Q})=\left\{
\begin{array}
[c]{cl}%
2 & \text{if }\left(  i\right)  \ v_{2}(a_{6})=0\text{ and }v_{2}(P_{1})=4\text{ or }\left(  ii\right)  \ v_{2}(a_{6})\geq1\text{ and } v_{2}(P_{2})=4,\\
4 & \text{if }\left(  i\right)  \ v_{2}(a_{6})=0\text{ and }v_{2}(P_{1})\geq5\text{ or }\left(  ii\right)  \ v_{2}(a_{6})\geq1\text{ and }v_{2}(P_{2})\geq5.
\end{array}
\right.
\]
By (\ref{cptil1}) and Lemma \ref{lem:uvalue},
\begin{align*}
v_{2}(\Lambda) & =\omega(N_{-})-\sum_{p|N_{-}}\left(  v_{p}(\Delta)\ \operatorname{mod}2\right)  -  \omega(N_{-})  +v_{2}\!\left(  c_{2}(E^{D}/\mathbb{Q})\right)  \\
& \equiv\left\{
\begin{array}
[c]{cl}
b+1\ \operatorname{mod}2 & \text{if }\left(  i\right)  \ v_{2}(a_{6})=0\text{ and }v_{2}(P_{1})=4\text{ or }\left(  ii\right)  \ v_{2}(a_{6})\geq1\text{ and }v_{2}(P_{2})=4,\\
b\ \operatorname{mod}2 & \text{if }\left(  i\right)  \ v_{2}(a_{6})=0\text{ and }v_{2}(P_{1})\geq5\text{ or }\left(  ii\right)  \ v_{2}(a_{6})\geq1\text{ and }v_{2}(P_{2})\geq5.
\end{array}
\right.
\end{align*}
Hence,
{\small \begin{equation}%
\begin{array}
[c]{l}%
\Lambda\text{ is a square}\qquad\Longleftrightarrow\qquad\\
\left\{
\begin{array}
[c]{cl}%
\left(  -1\right)  ^{b+1}=1 & \text{if }\left(  i\right)  \ v_{2}%
(a_{6})=0\text{ and }v_{2}(P_{1})=4\text{ or }\left(  ii\right)  \ v_{2}%
(a_{6})\geq1\text{ and }v_{2}(P_{2})=4,\\
\left(  -1\right)  ^{b}=1 & \text{if }\left(  i\right)  \ v_{2}(a_{6})=0\text{
and }v_{2}(P_{1})\geq5\text{ or }\left(  ii\right)  \ v_{2}(a_{6})\geq1\text{
and }v_{2}(P_{2})\geq5.
\end{array}
\right.
\end{array}
\label{uvalueDval3}%
\end{equation}}

\bigskip

Now set
\begin{align*}
\mathcal{A}_{1} &  =\left\{  \left(  x_{1},x_{2},x_{3},x_{4},x_{6},y\right) \in\mathbb{Z}^{6}\mid v_{2}(x_{1})=v_{2}(x_{6})=v_{2}(y)=0,v_{2}(x_{3})\geq2,v_{2}(x_{4})\geq1\right\}  ,\\
\mathcal{A}_{2} &  =\left\{  \left(  x_{1},x_{2},x_{3},x_{4},x_{6},y\right) \in\mathbb{Z}^{6}\mid v_{2}(x_{1})=v_{2}(x_{4})=v_{2}(y)=0,v_{2}(x_{3})\geq2,v_{2}(x_{6})\geq1\right\}  .
\end{align*}
Thus, $\left(  a_{1},a_{2},a_{3},a_{4},a_{6},m\right)  \in\mathcal{A}=\mathcal{A}_{1}\cup\mathcal{A}_{2}$ and $\mathcal{A}_{1}\cap\mathcal{A}_{2}=\varnothing$. Now consider the natural projection $\pi:\mathcal{A}\rightarrow\left(  \mathbb{Z}/32\mathbb{Z}\right)  ^{6}$. By construction, $\pi(\mathcal{A}_{1})\cap\pi(\mathcal{A}_{2})=\varnothing$. For $X=\left(x_{1},x_{2},x_{3},x_{4},x_{6},y\right)  \in\pi(\mathcal{A})$, define $\tau :\pi(\mathcal{A})\rightarrow\mathbb{Z}/32\mathbb{Z}$ by
\[
\tau\left(  X\right)  =\left\{
\begin{array}
[c]{cl}
4+16x_{2}+8x_{4}+4x_{6}-2y-2yx_{6}^{2}-4yx_{6}\ \operatorname{mod}32 &
\text{if }X\in\pi(\mathcal{A}_{1}),\\
x_{3}^{2}-2yx_{6}^{2}+4x_{6}\ \operatorname{mod}32 & \text{if }X\in
\pi(\mathcal{A}_{2}).
\end{array}
\right.
\]
Then, $\tau\circ\pi:\mathcal{A}\rightarrow\mathbb{Z}/32\mathbb{Z}$ and by the proof of \cite[Theorem 5.1, $v(d)=1$, Subcases 1a,1b]{BRSTTW}, $\left(\tau\circ\pi\right)  (\mathcal{A})\in\left\{  0,16\right\}  $. Now set
\[
\mathcal{C}_{2}=\tau^{-1}(16)\qquad\text{and}\qquad\mathcal{C}_{4}=\tau^{-1}(0).
\]
Hence, $\pi(\mathcal{A})=\mathcal{C}_{2}\cup\mathcal{C}_{4}$. Moreover, for $\left(  a_{1},a_{2},a_{3},a_{4},a_{6},m\right)  \in\mathcal{A}$ we have that
\[
c_{2}(E^{D}/\mathbb{Q})=\left\{
\begin{array}
[c]{cl}%
2 & \text{if }\pi\!\left(  a_{1},a_{2},a_{3},a_{4},a_{6},m\right)
\in\mathcal{C}_{2},\\
4 & \text{if }\pi\!\left(  a_{1},a_{2},a_{3},a_{4},a_{6},m\right)
\in\mathcal{C}_{4}.
\end{array}
\right.
\]
Consequently, (\ref{uvalueDval3}) can be restated as
\begin{equation}
\Lambda\text{ is a square}\qquad\Longleftrightarrow\qquad\left\{
\begin{array}
[c]{cl}%
\left(  -1\right)  ^{b+1}=1 & \text{if }\pi\!\left(  a_{1},a_{2},a_{3}%
,a_{4},a_{6},m\right)  \in\mathcal{C}_{2},\\
\left(  -1\right)  ^{b}=1 & \text{if }\pi\!\left(  a_{1},a_{2},a_{3}%
,a_{4},a_{6},m\right)  \in\mathcal{C}_{4}.
\end{array}
\right.  \nonumber
\end{equation}
By Lemma \ref{Lem:TamRecRel}, the theorem has been reduced to proving that
\[
\left(  \frac{\Delta}{m}\right)  =\left\{
\begin{array}
[c]{cl}%
\left(  -1\right)  ^{b+1} & \text{if }\pi\!\left(  a_{1},a_{2},a_{3}%
,a_{4},a_{6},m\right)  \in\mathcal{C}_{2},\\
\left(  -1\right)  ^{b} & \text{if }\pi\!\left(  a_{1},a_{2},a_{3},a_{4}%
,a_{6},m\right)  \in\mathcal{C}_{4}.
\end{array}
\right.
\]
To this end, we note that
\begin{align*}
\Delta &  \equiv a_{1}^{5}a_{3}a_{4}-a_{1}^{6}a_{6}+a_{1}^{4}a_{4}^{2}+4a_{1}^{4}a_{2}a_{6}\ \operatorname{mod}8\\
&  \equiv a_{3}a_{4}-a_{6}+a_{4}^{2}+4a_{2}a_{6}\ \operatorname{mod}8\\
&  \equiv\left\{
\begin{array}
[c]{cl}
a_{4}^{2}+4a_{2}-a_{6}\ \operatorname{mod}8 & \text{if }\pi\!\left(
a_{1},a_{2},a_{3},a_{4},a_{6},m\right)  \in\pi(\mathcal{A}_{1}),\\
a_{3}-a_{6}+1\ \operatorname{mod}8 & \text{if }\pi\!\left(  a_{1},a_{2}
,a_{3},a_{4},a_{6},m\right)  \in\pi(\mathcal{A}_{2}).
\end{array}
\right.
\end{align*}
Now consider the functions $\lambda:\pi(\mathcal{A})\rightarrow\mathbb{Z}/8\mathbb{Z}$ and $\mu:\pi(\mathcal{A})\rightarrow\mathbb{Z}/4\mathbb{Z}$ defined by
\begin{align*}
\lambda\left(  x_{1},x_{2},x_{3},x_{4},x_{6},y\right)   &  =\left\{
\begin{array}
[c]{cl}%
x_{4}^{2}+4x_{2}-x_{6}\ \operatorname{mod}8 & \text{if }\left(  x_{1}%
,x_{2},x_{3},x_{4},x_{6},y\right)  \in\pi(\mathcal{A}_{1}),\\
x_{3}-x_{6}+1\ \operatorname{mod}8 & \text{if }\left(  x_{1},x_{2},x_{3}%
,x_{4},x_{6},y\right)  \in\pi(\mathcal{A}_{2}),
\end{array}
\right.  \\
\mu\left(  x_{1},x_{2},x_{3},x_{4},x_{6},y\right)   &  =y\ \operatorname{mod}
4.
\end{align*}
In particular, we have the following diagram:
\[
\begin{tikzcd}
& (\mathbb{Z}/32\mathbb{Z})^6                                                                & \mathbb{Z}/32\mathbb{Z} \\
\mathcal{A} \arrow[r, "\pi", two heads] & \pi(\mathcal{A}) \arrow[ru, "\tau"] \arrow[r, "\lambda"] \arrow[rd, "\mu"] \arrow[u, hook] & \mathbb{Z}/8\mathbb{Z}  \\
&                                                                                            & \mathbb{Z}/4\mathbb{Z}
\end{tikzcd}
\]
Observe that by construction
\begin{align*}
\left(  \lambda\circ\pi\right)  \left(  a_{1},a_{2},a_{3},a_{4},a_{6},m\right)   &  \equiv\Delta\ \operatorname{mod}8,\\
\left(  \mu\circ\pi\right)  \left(  a_{1},a_{2},a_{3},a_{4},a_{6},m\right)
&  \equiv m\ \operatorname{mod}4.
\end{align*}
In particular, if $\pi\!\left(  a_{1},a_{2},a_{3},a_{4},a_{6},m\right)  =X$,
then $\left(  \lambda(X),\mu(X)\right)  =\left(  \Delta\ \operatorname{mod}%
8,m\ \operatorname{mod}4\right)  $. In~\cite{GitHub}, it is shown that%
\[
\left\{  \left(  \lambda(X),\mu(X)\right)  \in\mathbb{Z}/8\mathbb{Z}%
\times\mathbb{Z}/4\mathbb{Z}\mid X\in\mathcal{C}_{j}\right\}  =\left\{
\begin{array}
[c]{cl}%
\left\{  \left(  3,1\right)  ,\left(  5,1\right)  ,\left(  5,3\right)
,\left(  7,3\right)  \right\}   & \text{if }j=2,\\
\left\{  \left(  1,1\right)  ,\left(  1,3\right)  ,\left(  3,3\right)
,\left(  7,1\right)  \right\}   & \text{if }j=4.
\end{array}
\right.
\]
By Lemma \ref{Lem:QuadRec}, we have that
\[
\left(  \frac{\Delta}{m}\right)  =\left\{
\begin{array}
[c]{cl}%
\left(  -1\right)  ^{b+1} & \text{if }\pi\!\left(  a_{1},a_{2},a_{3}%
,a_{4},a_{6},m\right)  \in\mathcal{C}_{2},\\
\left(  -1\right)  ^{b} & \text{if }\pi\!\left(  a_{1},a_{2},a_{3},a_{4}%
,a_{6},m\right)  \in\mathcal{C}_{4},
\end{array}
\right.
\]
which concludes the proof.
\end{proof}

\section{The case where $N_-$ has an even number of prime factors}\label{Sec5}

In this section, we consider the analogue of Theorem~\ref{1.1} in the other situation where $N_-$ is a squarefree product of an even number of primes. 

\bigskip

Thus, we consider the situation of an elliptic curve $E/\mathbb{Q}$, whose conductor $N$ is factored in the form $N=N_+ N_-$, where $N_+,N_-$ are relatively prime, and this time $N_-$ is a squarefree product of an {\it even} number of distinct primes (including the case $N_-=1$). Again consider positive fundamental discriminants $D$ that satisfy the modified Heegner hypothesis with respect to $(N_+,N_-)$: all primes dividing $N_+$ split in $\mathbb{Q}(\sqrt{D})$, and all primes dividing $N_-$ are inert in $\mathbb{Q}(\sqrt{D})$; of course when $N_-=1$, then the condition is just that all primes dividing $N_+=N$ split in $\mathbb{Q}(\sqrt{D})$. As before, $E^D$ is the quadratic twist of $E$ by $D$. In this case we have $\epsilon(E/\mathbb{Q}) =\epsilon(E^D/\mathbb{Q})$. Hence, the analytic ranks of $E/\mathbb{Q}$ and $E^D/\mathbb{Q}$ have the same parity. We are interested in the case where the analytic ranks are both equal to zero, and the case where the analytic ranks are both equal to one.

\bigskip

As in Section~\ref{Sec3}, we actually consider a more general situation. Thus, as in Section~\ref{Sec3}, we consider quadratic (or trivial) even primitive Dirichlet characters $\chi_1,\chi_2$, not both trivial, whose conductors are denoted as $D_1,D_2$, and put $D:=D_1 \cdot D_2$. As usual we assume that $D$ satisfies the modified Heegner hypothesis with respect to $(N_+,N_-)$: $\chi_1(l)=\chi_2(l)$ for all primes $l|N_+$, and $\chi_1(q)=-\chi_2(q)$ for all primes $q|N_-$. Then $\epsilon(E/\mathbb{Q},\chi_1) = \epsilon(E/\mathbb{Q},\chi_2)$, and the analytic ranks of both $E^{D_1}/\mathbb{Q}$ and $E^{D_2}/\mathbb{Q}$ have the same parity. 

\bigskip

Finally condition (*) of Section~\ref{Sec3} is again assumed to hold for the pair $(\chi_1,\chi_2)$. 

\bigskip

\begin{theorem}\label{5.1}
With notations as above, suppose that the analytic ranks of $E^{D_1}/\mathbb{Q}$ and $E^{D_2}/\mathbb{Q}$ are both equal to zero. Then the Birch and Swinnerton-Dyer formula modulo square of rational numbers holds for $E^{D_1}/\mathbb{Q}$ if and only if it holds for $E^{D_2}/\mathbb{Q} $. 
\end{theorem}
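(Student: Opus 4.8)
The plan is to deduce Theorem~\ref{5.1} from the odd-$N_-$ result Theorem~\ref{3.4}, by inserting an auxiliary twist of analytic rank one whenever $E/\mathbb{Q}$ has a prime of multiplicative reduction, and to dispose of the remaining degenerate configuration ($N_-=1$ with $E/\mathbb{Q}$ having no multiplicative prime) by a direct central-value identity. First I would record the reduction of the BSD formula itself. Since both $E^{D_1}/\mathbb{Q}$ and $E^{D_2}/\mathbb{Q}$ have analytic rank zero, Kolyvagin \cite{K} gives that their Shafarevich--Tate groups are finite (hence of square order) and their Mordell--Weil groups are torsion; thus the Birch and Swinnerton-Dyer formula modulo $(\mathbb{Q}^\times)^2$ for $E^{D_i}/\mathbb{Q}$ is equivalent to
\[
L(1,E^{D_i}/\mathbb{Q}) \;=\; \prod_{l\mid N} c_l(E^{D_i}/\mathbb{Q})\cdot \prod_{l\mid D_i} c_l(E^{D_i}/\mathbb{Q})\cdot \Omega^+_{E^{D_i}/\mathbb{Q}} \pmod{(\mathbb{Q}^\times)^2}.
\]

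In the main case, where $E/\mathbb{Q}$ has at least one prime $q_0$ of multiplicative reduction, I would choose an auxiliary even quadratic primitive Dirichlet character $\chi_3$, of fundamental discriminant $D_3$ coprime to $ND_1D_2$, subject to the local conditions $\chi_3(l)=+1$ at every additive prime $l\mid N$ (note condition (*) forces $\chi_1(l)=+1$ there), $\chi_3(l)=\chi_1(l)$ at every multiplicative prime $l\mid N$ with $l\neq q_0$, and $\chi_3(q_0)=-\chi_1(q_0)$. Setting $A_-:=\{l\mid N:\chi_1(l)=-\chi_3(l)\}$ and $A_+:=\{l\mid N:\chi_1(l)=\chi_3(l)\}$, the pair $(\chi_1,\chi_3)$ then satisfies the modified Heegner hypothesis with respect to $(A_+,A_-)$, condition (*) holds (by construction $\chi_3(l)=-1$ only at multiplicative primes), and $\omega(A_-)=1$ is odd. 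The crucial bookkeeping observation is that, writing $\operatorname{inert}(\cdot)$ for the inert locus inside $\{l\mid N\}$, one has $\operatorname{inert}(\chi_1\chi_3)\,\triangle\,\operatorname{inert}(\chi_2\chi_3)=\operatorname{inert}(\chi_1\chi_2)$, whose cardinality is the even number $\omega(N_-)$; hence $\operatorname{inert}(\chi_2\chi_3)$ also has odd cardinality, so $(\chi_2,\chi_3)$ equally falls under the framework of Section~\ref{Sec3} with an odd number of inert multiplicative primes.

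Since $\omega(A_-)$ is odd we have $\epsilon(E^{D_3}/\mathbb{Q})=-\epsilon(E^{D_1}/\mathbb{Q})=-1$; Friedberg--Hoffstein \cite{FH}, applied to $f_E$ inside the finitely many congruence conditions at the primes of $N$ (and away from $D_1D_2$) that pin down $\chi_3$, then supplies such a $D_3$ with $L'(1,E^{D_3}/\mathbb{Q})\neq0$, i.e. with $E^{D_3}/\mathbb{Q}$ of analytic rank exactly one. Now Theorem~\ref{3.4} applies both to the pair $(\chi_1,\chi_3)$ and to the pair $(\chi_2,\chi_3)$ (in each case one twist of rank zero and one of rank one), yielding that the formula modulo squares holds for $E^{D_1}/\mathbb{Q}$ iff it holds for $E^{D_3}/\mathbb{Q}$, and for $E^{D_2}/\mathbb{Q}$ iff it holds for $E^{D_3}/\mathbb{Q}$. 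Chaining these two equivalences proves the theorem in this case.

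The construction above needs a multiplicative prime at which to flip the sign, so the genuinely separate case is $N_-=1$ with $E/\mathbb{Q}$ having no multiplicative prime; I expect this to be the main obstacle. Here condition (*) forces $\chi_1(l)=\chi_2(l)=+1$ for all $l\mid N$, so both twists are isomorphic to $E$ over each $\mathbb{Q}_l$ and $\prod_{l\mid N}c_l(E^{D_i}/\mathbb{Q})=\prod_{l\mid N}c_l(E/\mathbb{Q})$ is common to $i=1,2$. Feeding in $\Omega^+_{E^{D_i}/\mathbb{Q}}=u_{D_i}\Omega^+_{E/\mathbb{Q}}/\sqrt{D_i}$ and Theorem~\ref{1.3} specialized to $N_-=1$ (which makes $u_{D_i}\prod_{l\mid D_i}c_l(E^{D_i}/\mathbb{Q})$ a perfect square), the displayed criterion collapses to $L(1,E^{D_i}/\mathbb{Q})\sqrt{D_i}/\Omega^+_{E/\mathbb{Q}}\equiv \prod_{l\mid N}c_l(E/\mathbb{Q})\pmod{(\mathbb{Q}^\times)^2}$; multiplying the two criteria and using $L(1,E^{D_1}/\mathbb{Q})L(1,E^{D_2}/\mathbb{Q})=L(1,E/F,\chi_F)$ with $D_1D_2=D_F$, the desired equivalence reduces to the single central-value identity
\[
L(1,E/F,\chi_F)\;\equiv\;\frac{(\Omega^+_{E/\mathbb{Q}})^2}{\sqrt{D_F}}\pmod{(\mathbb{Q}^\times)^2}.
\]
This identity lies outside Theorem~\ref{3.4} and must be proved directly, by mirroring the auxiliary-Hecke-character argument of Section~\ref{Sec2} but invoking Zhang's central value formula (Theorem 1.10 of \cite{CST}) in place of the derivative formula: with $N_-=1$ the Tamagawa product $\prod_{q\mid N_-}c_q(E/F)$ is empty and the N\'eron--Tate height term is replaced by a rational square class, so that combining it with Theorem 3.2 of \cite{Mok2} and the relevant Gauss-sum and conductor--discriminant identities, exactly as in the proof of Theorem~\ref{2.1}, yields the displayed identity and completes this case.
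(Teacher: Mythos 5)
Your main case---$E/\mathbb{Q}$ possessing at least one prime of multiplicative reduction---is correct, and it is essentially the paper's own argument: the paper splits instead into $N_-=1$ versus $N_-\neq 1$ and, in the latter case, flips the sign of $\chi_1$ at a prime $p\mid N_-$, whereas you flip at an arbitrary multiplicative prime $q_0$ (possibly dividing $N_+$); your symmetric-difference computation showing that both pairs $(\chi_1,\chi_3)$ and $(\chi_2,\chi_3)$ have an odd number of inert multiplicative primes is exactly the required bookkeeping, and condition (*) and the Friedberg--Hoffstein step are handled as in the paper. This reorganization is explicitly anticipated by the paper's Remark 5.3, which notes that when a multiplicative prime exists, the $N_-=1$ case can also be done with an auxiliary character.

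The genuine gap is in your degenerate case ($N_-=1$ and no multiplicative prime). Your reduction of the equivalence to the single identity
\[
L(1,E/F,\chi_F)\;\equiv\;\frac{(\Omega^+_{E/\mathbb{Q}})^2}{\sqrt{D_F}} \pmod{(\mathbb{Q}^{\times})^2}
\]
is correct, and this identity is precisely Popa's central-value formula (Theorem 5.4.1 of \cite{Popa}), which is what the paper invokes as \eqref{e5.4} for the whole $N_-=1$ case. But your proposed derivation of it---transplanting the proof of Theorem~\ref{2.1} with Zhang's central value formula (Theorem 1.10 of \cite{CST}) used twice in place of the Gross--Zagier formula---does not produce this identity. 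Run the computation with the formulas as quoted in Section~\ref{Sec2}: the ratio of the two central-value formulas gives $L(1,E/F,\chi_F)\equiv \sqrt{\mathcal{N}_{F/\mathbb{Q}}\mathfrak{c}_{\delta}}\cdot L(1,E/F,\delta)$ modulo squares, and inserting \eqref{e2.8} together with the Gauss-sum identity yields
\[
L(1,E/F,\chi_F)\;\equiv\;\frac{(\Omega^+_{E/\mathbb{Q}})^2}{2\sqrt{D_F}} \pmod{(\mathbb{Q}^{\times})^2},
\]
which differs from the needed identity by the non-square factor $2$. The point is that in the proof of Theorem~\ref{2.1} the factor $\tfrac12$ coming from Mok's formula \eqref{e2.8} is cancelled by the height-doubling identity $\mathrm{height}_K(\mathbf{P})=2\,\mathrm{height}_F(\mathbf{P})$ of \eqref{e2.12}; once you replace the height term by ``a rational square class,'' that cancellation is gone, and nothing in your sketch recovers it. In a paper whose central technical content (Theorem~\ref{1.3}) is exactly the parity of powers of $2$, this stray factor is fatal, not cosmetic. (There is also a secondary issue: the paper notes that Theorem 3.2 of \cite{Mok2} carries a nonvanishing hypothesis, verified there via $L^{\prime}(1,E/F,\chi_F)\neq 0$, and you have not checked its applicability in this central-value setting.) So this case genuinely requires Popa's formula as an external input---or an argument that identifies where the missing factor of $2$ comes from---rather than a mirror of Section~\ref{Sec2}.
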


\bigskip

\begin{proof}
Thus we assume that $L(1,E^{D_1}/\mathbb{Q}) $ and $L(1,E^{D_2}/\mathbb{Q})$ are both non-zero, in particular $\epsilon(E/\mathbb{Q},\chi_1)=\epsilon(E/\mathbb{Q},\chi_2)=1$. By symmetry, it suffices to show that, assuming that the Birch and Swinnerton-Dyer formula modulo square of rational numbers is valid for $E^{D_1}/\mathbb{Q}$, i.e., we have:

\begin{eqnarray}\label{e5.1}
L(1,E^{D_1}/\mathbb{Q})  = \prod_{l | N D_1} c_l(E^{D_1}/\mathbb{Q}) \cdot \Omega^+_{E^{D_1}/\mathbb{Q}}
\mod{(\mathbb{Q}^{\times})^2},
\end{eqnarray}

\bigskip
\noindent then we need to show:

\begin{eqnarray}\label{e5.2}
L(1,E^{D_2}/\mathbb{Q})  = \prod_{l | N D_2} c_l(E^{D_2}/\mathbb{Q}) \cdot \Omega^+_{E^{D_2}/\mathbb{Q}}
\mod{(\mathbb{Q}^{\times})^2}
\end{eqnarray}

\bigskip

Let us first consider the case where $N_-=1$. As usual put $F=\mathbb{Q}(\sqrt{D})$, and $\chi_F$ the genus class character of $F$ corresponding to the pair $(\chi_1,\chi_2)$. Then we have the factorization of $L$-function:
\[
L(s,E/F,\chi_F) = L(s,E^{D_1}/\mathbb{Q}) \cdot L(s,E^{D_2}/\mathbb{Q})
\]
so in particular
\begin{eqnarray}\label{e5.3}
L(1,E/F,\chi_F) = L(1,E^{D_1}/\mathbb{Q}) \cdot L(1,E^{D_2}/\mathbb{Q}) \neq 0
\end{eqnarray}

\bigskip

Then in place of Theorem~\ref{2.1}, we use Popa's formula, namely Theorem 5.4.1 of \cite{Popa} with $N_-=1$, which tells us that:

\begin{eqnarray}\label{e5.4}
L(1,E/F,\chi_F) = \frac{1}{\sqrt{D}} (\Omega^+_{E/\mathbb{Q}} )^2 \mod{(\mathbb{Q}^{\times})^2} 
\end{eqnarray}
(here we remark that we are using Popa's formula in the classical setting as given in section 6 of {\it loc. cit.})

\bigskip

Then by Theorem~\ref{3.1}, in this particular case $N_-=1$, we have:

\begin{eqnarray}\label{e5.5}
u_{D_1} u_{D_2}  \prod_{l | D}c_l(E^{D_1}/\mathbb{Q})    \prod_{l | D}c_l(E^{D_1}/\mathbb{Q})  = 1 \mod{(\mathbb{Q}^{\times})^2} 
\end{eqnarray}

\bigskip
Thus combining \eqref{e5.1}, \eqref{e5.3} , \eqref{e5.4} and \eqref{e5.5} (and the definition of $u_{D_i}$, $i=1,2$), we see that \eqref{e5.2} is valid.

\bigskip

Next, we consider the case where $N_- \neq 1$. Pick any prime $p |N_-$, and put:
\[
N_+^{\prime} := N/p, \,\ N_-^{\prime} :=  p
\]
\[
N_+^{\prime \prime} := p N_+, \,\ N_-^{\prime \prime} :=  N_-/p
\]

\bigskip
Then $N=N_+^{\prime} N_-^{\prime}$ (resp. $N=N_+^{\prime \prime} N_-^{\prime \prime}$), with $N_+^{\prime}$ and $N_-^{\prime}$ being relatively prime (resp.  $N_+^{\prime \prime}$ and $N_-^{\prime \prime}$ being relatively prime), and $N_-^{\prime}$ (resp. $N_-^{\prime \prime}$) is a squarefree product of an {\it odd} number of distinct primes. 

\bigskip
We now choose an auxiliary quadratic primitive Dirichlet character $\chi_3$ satisfying the following conditions:

\begin{itemize}
\item (a) The conductor of $\chi_3$ is relatively prime to $N D$.
\item (b) $\chi_3$ is even, i.e. $\chi_3(-1)=1$.
\item (c) $\chi_3(l) = \chi_1(l)$ for all primes $l \neq p$ dividing $N$.
\item (d) $\chi_3(p) = -\chi_1(p)$. 
\item (e) $L^{\prime}(1,E/\mathbb{Q},\chi_3) \neq 0$.
\end{itemize}

\bigskip
The existence of such $\chi_3$ is again guaranteed by \cite{FH}. Indeed for any $\chi_3$ that satisfy conditions (a)-(d), we have that $\epsilon(E/\mathbb{Q},\chi_3) = -\epsilon(E/\mathbb{Q},\chi_1)=-1 $. And so by \cite{FH} we can choose $\chi_3$ satisfying (a)-(d) and such that $L^{\prime}(1,E/\mathbb{Q},\chi_3) \neq 0$. 

\bigskip

Fix such a $\chi_3$, and denote by $D_3$ the conductor of $\chi_3$. Thus $D_3$ is relatively prime to $N D = N D_1 D_2$. The analytic rank of $E^{D_3}/\mathbb{Q}$ is thus equal to one.

\bigskip
Put $D^{\prime} :=D_1 D_3$ (corresponding to the primitive Dirichlet character $\chi_1 \cdot \chi_3$) and $D^{\prime \prime} :=D_2 D_3$ (corresponding to the primitive Dirichlet character $\chi_2 \cdot \chi_3$). 

\bigskip
Now we note that $D^{\prime} =D_1 D_3$ satisfies the modified Heegner hypothesis with respect to $(N_+^{\prime},N_-^{\prime})$, namely that $\chi_1(l)=\chi_3(l)$ for all primes $l|N_+^{\prime}$, and $\chi_1(q)=-\chi_3(q)$ for all primes $q|N_-^{\prime}$ (in this case there is only one such $q$, namely $p$); also it is clear that condition (*) of Section~\ref{Sec3} also holds for the pair $(\chi_1,\chi_3)$. So we can apply Theorem~\ref{3.4} to the pair $(D_1,D_3)$; in particular from the validity of the Birch and Swinnerton-Dyer formula modulo square of rational numbers for $E^{D_1}/\mathbb{Q}$, we obtain the validity of the Birch and Swinnerton-Dyer formula modulo square of rational numbers for $E^{D_3}/\mathbb{Q}$.

\bigskip

In turn we then note that $D^{\prime \prime} =D_2 D_3$ satisfies the modified Heegner hypothesis with respect to $(N_+^{\prime \prime},N_-^{\prime \prime})$, namely that $\chi_2(l)=\chi_3(l)$ for all primes $l|N_+^{\prime \prime}$, and $\chi_2(q)=-\chi_3(q)$ for all primes $q|N_-^{\prime \prime}$; also condition (*) of Section~\ref{Sec3} again holds for the pair $(\chi_2,\chi_3)$. So we can again apply Theorem~\ref{3.4}, this time to the pair $(D_2,D_3)$; in particular, we obtain the validity of the Birch and Swinnerton-Dyer formula modulo square of rational numbers for $E^{D_2}/\mathbb{Q}$ from that of $E^{D_3}/\mathbb{Q}$.

\bigskip
This finishes the proof of Theorem~\ref{5.1}

\end{proof}

\bigskip

Finally we consider the case where both $E^{D_1}/\mathbb{Q}$ and $E^{D_2}/\mathbb{Q}$ have analytic rank one, in particular $\epsilon(E/\mathbb{Q},\chi_1) = \epsilon(E/\mathbb{Q},\chi_2)=-1$. For the proof of Theorem~\ref{5.2} we need to assume that $E/\mathbb{Q}$ has at least one prime of multiplicative reduction.

\bigskip

\begin{theorem}\label{5.2}
Assume that $E/\mathbb{Q}$ has at least one prime of multiplicative reduction. Suppose that the analytic ranks of $E^{D_1}/\mathbb{Q}$ and $E^{D_2}/\mathbb{Q}$ are both equal to one. Then the Birch and Swinnerton-Dyer formula modulo square of rational numbers holds for $E^{D_1}/\mathbb{Q}$ if and only if it holds for $E^{D_2}/\mathbb{Q} $. 
\end{theorem}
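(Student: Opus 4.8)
The plan is to follow the strategy of Theorem~\ref{5.1}, but with one essential modification forced by the rank-one hypothesis. Since both $E^{D_1}/\mathbb{Q}$ and $E^{D_2}/\mathbb{Q}$ now have analytic rank one, the factorization $L(s,E/F,\chi_F)=L(s,E^{D_1}/\mathbb{Q})\cdot L(s,E^{D_2}/\mathbb{Q})$ shows that $L(s,E/F,\chi_F)$ vanishes to order two at $s=1$; in particular $L'(1,E/F,\chi_F)=0$, so Theorem~\ref{2.1} is unavailable, and Popa's central-value formula \cite{Popa} (the key tool for the case $N_-=1$ in Theorem~\ref{5.1}) no longer suffices. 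Instead I would insert an auxiliary twist $E^{D_3}/\mathbb{Q}$ of analytic rank \emph{zero} between $E^{D_1}/\mathbb{Q}$ and $E^{D_2}/\mathbb{Q}$, and transfer the Birch and Swinnerton-Dyer formula modulo squares across each step using the odd-$N_-$ result Theorem~\ref{3.4}.

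The construction of the bridging character proceeds exactly as in the proof of Theorem~\ref{5.1}: using the hypothesis that $E/\mathbb{Q}$ has a prime $p$ of multiplicative reduction, I would fix such a $p$ and apply Friedberg-Hoffstein \cite{FH} to produce an even primitive quadratic character $\chi_3$ of conductor $D_3$ coprime to $ND$, with $\chi_3(l)=\chi_1(l)$ for every prime $l\mid N$ with $l\neq p$ and $\chi_3(p)=-\chi_1(p)$, and with $L(1,E/\mathbb{Q},\chi_3)\neq0$. As in Theorem~\ref{5.1}, any $\chi_3$ satisfying the first conditions has $\epsilon(E/\mathbb{Q},\chi_3)=-\epsilon(E/\mathbb{Q},\chi_1)$, because altering the twist only at the multiplicative prime $p$ reverses the local root number there; since $\epsilon(E/\mathbb{Q},\chi_1)=-1$, this gives $\epsilon(E/\mathbb{Q},\chi_3)=+1$, so that $E^{D_3}/\mathbb{Q}$ has analytic rank zero. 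This is precisely the point at which the multiplicative-reduction hypothesis is used: when $N_-=1$ the prime $p$ must be taken in $N_+=N$, and there is no central-value formula to fall back on as there was in Theorem~\ref{5.1}.

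Next I would check that both pairs $(\chi_1,\chi_3)$ and $(\chi_2,\chi_3)$ satisfy the hypotheses of Theorem~\ref{3.4}. Because $\chi_3$ agrees with $\chi_1$ away from $p$, the discriminant $D_1D_3$ satisfies the modified Heegner hypothesis with respect to the factorization $(N/p,\,p)$, whose inert part consists of the single (multiplicative) prime $p$ and so has an odd number of prime factors; condition (*) for $(\chi_1,\chi_3)$ follows from condition (*) for $(\chi_1,\chi_2)$ together with the multiplicativity of $p$. For $(\chi_2,\chi_3)$, computing the products $\chi_2(l)\chi_3(l)$ prime by prime shows that $D_2D_3$ satisfies the modified Heegner hypothesis with respect to a factorization $(N_+'',N_-'')$ in which $N_-''=N_-/p$ if $p\mid N_-$ and $N_-''=p\,N_-$ if $p\mid N_+$; in either case $N_-''$ has an odd number of prime factors because $\omega(N_-)$ is even. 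Since $E^{D_3}/\mathbb{Q}$ has rank zero while $E^{D_1}/\mathbb{Q}$ and $E^{D_2}/\mathbb{Q}$ have rank one, the analytic ranks in each pair have opposite parity, so Theorem~\ref{3.4} applies to each and gives that the formula modulo squares holds for $E^{D_1}/\mathbb{Q}$ if and only if it holds for $E^{D_3}/\mathbb{Q}$, and for $E^{D_2}/\mathbb{Q}$ if and only if it holds for $E^{D_3}/\mathbb{Q}$. Chaining the two equivalences completes the proof.

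The step I expect to be the main obstacle, or at least the most delicate bookkeeping, is the uniform verification that the induced factorizations are admissible for Theorem~\ref{3.4}: one must confirm across the cases $p\mid N_-$ and $p\mid N_+$ that the inert part always has odd prime count and that condition (*) is inherited, and, most importantly, that twisting at the single multiplicative prime $p$ genuinely flips the global sign of the functional equation. The latter is exactly why the hypothesis of a multiplicative prime cannot be dropped: at an additive prime an unramified quadratic twist need not change the local root number, and the rank-zero intermediate curve $E^{D_3}/\mathbb{Q}$ could fail to exist.
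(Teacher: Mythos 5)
Your proposal is correct and takes essentially the same route as the paper's own proof: the paper likewise bridges $E^{D_1}/\mathbb{Q}$ and $E^{D_2}/\mathbb{Q}$ through an auxiliary rank-zero twist $E^{D_3}/\mathbb{Q}$, constructed via Friedberg--Hoffstein by an even character $\chi_3$ agreeing with $\chi_1$ at all primes of $N$ except a single multiplicative prime $p$ where the value is flipped (forcing $\epsilon(E/\mathbb{Q},\chi_3)=+1$), and then applies Theorem~\ref{3.4} to each of the pairs $(\chi_1,\chi_3)$ and $(\chi_2,\chi_3)$. The only difference is bookkeeping: the paper splits into the cases $N_-\neq 1$ (taking $p\mid N_-$, automatically multiplicative) and $N_-=1$ (taking $p$ a multiplicative prime of $N_+=N$), whereas you take any multiplicative prime and case-split on $p\mid N_-$ versus $p\mid N_+$; the resulting factorizations, parity checks, and verification of condition (*) agree.
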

\begin{proof}
We first consider the case where $N_- \neq 1$. Then the argument is very much similar to that of Theorem~\ref{5.1}: pick any prime $p |N_-$, and put:
\[
N_+^{\prime} := N/p, \,\ N_-^{\prime} :=  p
\]
\[
N_+^{\prime \prime} := p N_+, \,\ N_-^{\prime \prime} :=  N_-/p
\]

Then $N=N_+^{\prime} N_-^{\prime}$ (resp. $N=N_+^{\prime \prime} N_-^{\prime \prime}$), with $N_+^{\prime}$ and $N_-^{\prime}$ being relatively prime (resp.  $N_+^{\prime \prime}$ and $N_-^{\prime \prime}$ being relatively prime), and $N_-^{\prime}$ (resp. $N_-^{\prime \prime}$) is a squarefree product of an odd number of distinct primes. 

\bigskip
We now choose an auxiliary quadratic primitive Dirichlet character $\chi_3$ satisfying the following conditions:

\begin{itemize}
\item (a) The conductor of $\chi_3$ is relatively prime to $N D$.
\item (b) $\chi_3$ is even, i.e. $\chi_3(-1)=1$.
\item (c) $\chi_3(l) = \chi_1(l)$ for all primes $l \neq p$ dividing $N$.
\item (d) $\chi_3(p) = -\chi_1(p)$. 
\item (e) $L(1,E/\mathbb{Q},\chi_3) \neq 0$.
\end{itemize}

\bigskip
The existence of such $\chi_3$ is again guaranteed by \cite{FH}, or this time we can also use Murty-Murty \cite{MM}. Indeed for any $\chi_3$ that satisfy conditions (a)-(d), we have that $\epsilon(E/\mathbb{Q},\chi_3) = -\epsilon(E/\mathbb{Q},\chi_1)=+1 $. And so by \cite{FH} or \cite{MM} we can choose $\chi_3$ satisfying (a)-(d) and such that $L(1,E/\mathbb{Q},\chi_3) \neq 0$. 

\bigskip

Fix such a $\chi_3$, and denote by $D_3$ the conductor of $\chi_3$ (and $D_3$ is relatively prime to $N D = N D_1 D_2$). The analytic rank of $E^{D_3}/\mathbb{Q}$ is thus equal to zero. In a similar way put $D^{\prime} :=D_1 D_3$ (corresponding to the primitive Dirichlet character $\chi_1 \cdot \chi_3$) and $D^{\prime \prime} :=D_2 D_3$ (corresponding to the primitive Dirichlet character $\chi_2 \cdot \chi_3$). 

\bigskip
Just as before we have that $D^{\prime} =D_1 D_3$ satisfies the modified Heegner hypothesis with respect to $(N_+^{\prime},N_-^{\prime})$, and that condition (*) of Section~\ref{Sec3} also holds for the pair $(\chi_1,\chi_3)$. So we can apply Theorem~\ref{3.4} to the pair $(D_1,D_3)$; in particular from the validity of the Birch and Swinnerton-Dyer formula modulo square of rational numbers for $E^{D_1}/\mathbb{Q}$, we obtain the validity of the Birch and Swinnerton-Dyer formula modulo square of rational numbers for $E^{D_3}/\mathbb{Q}$.

\bigskip

And similarly just as before $D^{\prime \prime} =D_2 D_3$ satisfies the modified Heegner hypothesis with respect to $(N_+^{\prime \prime},N_-^{\prime \prime})$, and that condition (*) of Section~\ref{Sec3} holds for the pair $(\chi_2,\chi_3)$. So we can again apply Theorem~\ref{3.4}, to the pair $(D_2,D_3)$; we thus obtain the validity of the Birch and Swinnerton-Dyer formula modulo square of rational numbers for $E^{D_2}/\mathbb{Q}$ from that of $E^{D_3}/\mathbb{Q}$. This proves the theorem in the case when $N_- \neq 1$.

\bigskip

Finally, in the remaining case where $N_-=1$ (and so $N_+=N$), then by assumption $E/\mathbb{Q}$ has multiplicative reduction at some prime $p$ dividing $N_+=N$. Fix any such $p$, which then divides $N$ exactly (in particular, $p$ and $N/p$ are relatively prime). And put:
\[
N^{\prime}_+ :=N/p, \,\ N^{\prime}_-=p
\]

\bigskip

By \cite{FH} or \cite{MM} again, there exists a quadratic primitive Dirichlet character $\chi_3$ that satisfies the following:
\begin{itemize}
\item (a) The conductor of $\chi_3$ is relatively prime to $N D$.
\item (b) $\chi_3$ is even, i.e. $\chi_3(-1)=1$.
\item (c) $\chi_3(l) = \chi_1(l) =\chi_2(l)$ for all primes $l \neq p$ dividing $N$.
\item (d) $\chi_3(p) = -\chi_1(p) = -\chi_2(p)$. 
\item (e) $L(1,E/\mathbb{Q},\chi_3) \neq 0$.
\end{itemize}

\bigskip
Fix such a $\chi_3$, and denote by $D_3$ the conductor of $\chi_3$ (and $D_3$ is relatively prime to $N D = N D_1 D_2$). The analytic rank of $E^{D_3}/\mathbb{Q}$ is thus equal to zero. In a similar way put $D^{\prime} :=D_1 D_3$ (corresponding to the primitive Dirichlet character $\chi_1 \cdot \chi_3$) and $D^{\prime \prime} :=D_2 D_3$ (corresponding to the primitive Dirichlet character $\chi_2 \cdot \chi_3$). 

\bigskip
Note that $D^{\prime} =D_1 D_3$ satisfies the modified Heegner hypothesis with respect to $(N_+^{\prime},N_-^{\prime})$, and that condition (*) of Section~\ref{Sec3} holds for the pair $(\chi_1,\chi_3)$. Similarly $D^{\prime \prime} =D_2 D_3$ also satisfies the modified Heegner hypothesis with respect to $(N_+^{ \prime},N_-^{ \prime})$, and that condition (*) of Section~\ref{Sec3} holds for the pair $(\chi_2,\chi_3)$. Thus, we can conclude the proof by the same argument as before.

\end{proof}

\bigskip

Finally it is clear that Theorem~\ref{1.4} is a special case of Theorem~\ref{5.1} and Theorem~\ref{5.2}.

\bigskip

\noindent {\bf Remark 5.3.} For the proof of Theorem~\ref{5.1} in the case when $N_- =1$, we then see that, if $E/\mathbb{Q}$ has at least one prime of multiplicative reduction, then we could also establish the result by using an auxiliary character, and so the use of Popa's formula could then be avoided.

\bigskip

We now conclude the paper with the following theorem, which is the more general version of Corollary~\ref{1.5}:

\bigskip

\begin{theorem}\label{5.3}
Let $E/\mathbb{Q}$ be an elliptic curve with conductor $N$. Consider positive fundamental discriminants $D_1,D_2$ that are relatively prime to $N$, and $D_1,D_2$ being relatively prime, such that for all primes $l$ of additive reduction of $E/\mathbb{Q}$ we have that $l$ splits in both $\mathbb{Q}(\sqrt{D_1})$ and $\mathbb{Q}(\sqrt{D_2})$. Suppose that the analytic ranks of both $E^{D_1}/\mathbb{Q}$ and $E^{D_2}/\mathbb{Q}$ are at most one, and in the case where both $E^{D_1}/\mathbb{Q}$ and $E^{D_2}/\mathbb{Q}$ have analytic rank one, we assume in addition that $E/\mathbb{Q}$ has at least one prime of multiplicative reduction. Then we have that the Birch and Swinnerton-Dyer formula modulo square of rational numbers holds for $E^{D_1}/\mathbb{Q}$ if and only if it holds for $E^{D_2}/\mathbb{Q}$.
\end{theorem}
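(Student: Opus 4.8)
The plan is to reduce Theorem~\ref{5.3} to the machinery already assembled in Sections~\ref{Sec3} and~\ref{Sec5}, by manufacturing out of the data $(E,D_1,D_2)$ a factorization $N=N_+N_-$ together with a pair of even primitive characters satisfying condition (*). This mirrors the proof of Corollary~\ref{1.5}, the only genuinely new feature being that $E/\mathbb{Q}$ is no longer assumed semistable, so its additive primes must be placed correctly and shown not to violate condition (*); this is exactly what the splitting hypothesis is for.

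First I would let $\chi_1,\chi_2$ be the Kronecker symbols attached to $D_1,D_2$. Since $D_1,D_2$ are positive these characters are even, and since they are coprime the product $D:=D_1D_2$ is again a positive fundamental discriminant, coprime to $N$. If $D_1=D_2=1$ the assertion is vacuous, so I assume at least one of $\chi_1,\chi_2$ is nontrivial. Because $D_1,D_2$ are coprime to $N$, for every prime $l\mid N$ both values $\chi_1(l),\chi_2(l)$ lie in $\{\pm1\}$, and I would partition the primes of $N$ accordingly by setting $N_+:=\prod_{l\mid N,\ \chi_1(l)=\chi_2(l)} l^{n_l}$ and $N_-:=\prod_{q\mid N,\ \chi_1(q)=-\chi_2(q)} q$. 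By construction $N=N_+N_-$, and $D$ satisfies the modified Heegner hypothesis with respect to $(N_+,N_-)$, since the primes over $N_+$ (where $\chi_1=\chi_2$, i.e.\ $\kappa=\chi_1\chi_2=1$) split in $F=\mathbb{Q}(\sqrt{D})$ while those over $N_-$ (where $\chi_1=-\chi_2$) are inert.

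The step requiring the most care, and the one that replaces the semistability of Corollary~\ref{1.5}, is the verification of condition (*). If $l\mid N$ and $\chi_1(l)=-1$ or $\chi_2(l)=-1$, then $l$ fails to split in at least one of $\mathbb{Q}(\sqrt{D_1}),\mathbb{Q}(\sqrt{D_2})$; the hypothesis that every prime of additive reduction splits in \emph{both} therefore forces $l$ to be a prime of multiplicative reduction, so $l\parallel N$. This simultaneously establishes condition (*) for the pair $(\chi_1,\chi_2)$ and shows that $N_-$ is squarefree, since each $q\mid N_-$ is multiplicative and hence $n_q=1$. Note that the additive primes of $E$, and the multiplicative primes where both characters equal $-1$, all land in $N_+$, which is precisely the situation accommodated by the splitting $N_{1,+}^{I},N_{1,-}^{I}$ of Section~\ref{Sec3}.

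It then remains to dispatch the cases by the parity of $\omega(N_-)$. Recalling from Sections~\ref{Sec3} and~\ref{Sec5} that $\epsilon(E/\mathbb{Q},\chi_1)$ and $\epsilon(E/\mathbb{Q},\chi_2)$ are opposite when $\omega(N_-)$ is odd and equal when $\omega(N_-)$ is even, and that $\epsilon(E^{D_i}/\mathbb{Q})=\epsilon(E/\mathbb{Q},\chi_i)$, I would argue as follows. When $\omega(N_-)$ is odd the analytic ranks of $E^{D_1}/\mathbb{Q}$ and $E^{D_2}/\mathbb{Q}$ have opposite parity; being at most one, one is $0$ and the other is $1$, and Theorem~\ref{3.4} applies directly. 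When $\omega(N_-)$ is even the ranks have the same parity: if both vanish, Theorem~\ref{5.1} applies, and if both equal one, Theorem~\ref{5.2} applies, its hypothesis that $E/\mathbb{Q}$ possesses a prime of multiplicative reduction being supplied by the standing assumption in this case. In invoking Theorems~\ref{5.1} and~\ref{5.2} I would observe that their auxiliary-character reductions are run relative to the factorization $(N_+,N_-)$ just built, and since $\chi_1,\chi_2$ and any auxiliary $\chi_3$ all take value $+1$ at the additive primes, condition (*) persists for every pair employed along the way, so those reductions go through unchanged. This completes the plan.
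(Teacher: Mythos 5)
Your proposal is correct and follows essentially the same route as the paper's own proof: define $N_+,N_-$ by comparing $\chi_1(l)$ and $\chi_2(l)$, use the splitting hypothesis at additive primes to verify $N=N_+N_-$ and condition (*), and then split into cases by the parity of $\omega(N_-)$, invoking Theorem~\ref{3.4}, Theorem~\ref{5.1}, or Theorem~\ref{5.2} exactly as the paper does. Your extra touches (handling $D_1=D_2=1$, noting $N_-$ is squarefree because its primes are multiplicative, and checking that condition (*) persists for the auxiliary pairs) are sound elaborations of steps the paper treats more tersely.
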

\begin{proof}
The proof is similar to that of Corollary~\ref{1.5}. Given $D_1,D_2$ as in the statement of Theorem~\ref{5.3}, let $\chi_1$ and $\chi_2$ be the Kronecker symbols corresponding to the positive fundamental discriminants $D_1$ and $D_2$, respectively. Define $N_+$ and $N_-$ as (here below $n_l$ is the exact power of a prime $l$ dividing $N$):
\[
N_+ := \prod_{l|N,\chi_1(l)=\chi_2(l)} l^{n_l}
\]
\[
N_- := \prod_{l|N, \chi_1(l)=-\chi_2(l)} l
\]
Then $N_+$ and $N_-$ are relatively prime with $N_-$ being squarefree, and by our assumption on $E/\mathbb{Q}$ at primes of additive reduction, we have $N=N_+ N_-$. Also, $D:=D_1 \cdot D_2$ satisfies the modified Heegner hypothesis with respect to $(N_+,N_-)$. In addition condition (*) of Section~\ref{Sec3} holds for the pair $(\chi_1,\chi_2)$, again by our assumption on $E/\mathbb{Q}$ at primes of additive reduction (namely that for primes $l$ of additive reduction we have $\chi_1(l)=\chi_2(l)=1$).

\bigskip
Assuming now that both the analytic ranks of $E^{D_1}/\mathbb{Q}$ and $E^{D_2}/\mathbb{Q}$ are at most one. If $N_-$ is a squarefree product of an odd number of distinct prime factors, then it must be the case that $E^{D_1}/\mathbb{Q}$ has analytic rank zero and $E^{D_2}/\mathbb{Q}$ has analytic rank one, or the case that $E^{D_1}/\mathbb{Q}$ has analytic rank one and $E^{D_2}/\mathbb{Q}$ has analytic rank zero, and so we apply Theorem~\ref{3.4}.  If $N_-$ is a squarefree product of an even number of distinct prime factors, then it must be the case that $E^{D_1}/\mathbb{Q}$ and $E^{D_2}/\mathbb{Q}$ both have analytic rank zero, for which we apply Theorem~\ref{5.1}, or the case that $E^{D_1}/\mathbb{Q}$ and $E^{D_2}/\mathbb{Q}$ both have analytic rank one, for which we apply Theorem~\ref{5.2} (here we are using the assumption that in this case $E/\mathbb{Q}$ at least one prime of multiplicative reduction for $E/\mathbb{Q}$, so that Theorem~\ref{5.2} is indeed applicable). 
\end{proof}

\bibliographystyle{plain}
\bibliography{bibliography}

\end{document}